\newtheorem{theorem}{Theorem}[section]
\newtheorem{lemma}{Lemma}[section]
\newtheorem{remark}{Remark}[section]
\newcommand{\N}{\mathbb{N}}
\newcommand{\R}{\mathbb{R}}
\newcommand{\F}{\mathcal{F}}
\newcommand{\M}{\mathcal{M}}
\numberwithin{equation}{section}
\newenvironment{proof}{\medskip\par\noindent{\bf Proof\/}:\quad}{\qquad
\raisebox{-0.5mm}{\rule{1.5mm}{1mm}}\vspace{6pt}}
\begin{document}
\title{Multiplicity and concentration of nontrivial nonnegative solutions for a fractional Choquard  equation with critical exponent\thanks{ This work is supported by NSFC (11361078,11661083,11771385), China. }}

\author{Shaoxiong Chen, Yue Li\thanks{Corresponding author:liyue9412@163.com. }, Zhipeng Yang\\
\footnotesize{\em Department\, of \,Mathematics,\, Yunnan\, Normal\, University,\, Kunming\, 650500 \,P.R. China }}

\date{} \maketitle
\noindent{\bf Abstract:}
In present paper, we study the fractional Choquard equation
$$\varepsilon^{2s}(-\Delta)^s u+V(x)u=\varepsilon^{\mu-N}(\frac{1}{|x|^\mu}\ast F(u))f(u)+|u|^{2^\ast_s-2}u$$
where $\varepsilon>0$ is a parameter, $s\in(0,1),$ $N>2s,$ $2^*_s=\frac{2N}{N-2s}$ and $0<\mu<\min\{2s,N-2s\}$. Under suitable assumption on $V$ and $f$, we prove this problem has a nontrivial nonnegative ground state solution. Moreover, we relate the number of nontrivial nonnegative solutions with the topology of the set where the potential attains its minimum values and their's concentration behavior.

\noindent{\bf Key Words:} Fractional Choquard equation; Ground state; Lusternik-Schnirelmann theory.\\
{\bf 2010 AMS Subject Classification:} 35P15, 35P30, 35R11.
\section{Introduction and the main results}
In this paper, we are interested in the existence, multiplicity and concentration behavior of the semi-classical solutions of the fractional Choquard equation
\begin{equation}\label{eq1.1}
\varepsilon^{2s}(-\Delta)^s u+V(x)u=\varepsilon^{\mu-N}(\frac{1}{|x|^\mu}\ast F(u))f(u)+|u|^{2^\ast_s-2}u, \,\,\ x\in \R^N
\end{equation}
where $\varepsilon >0$ is a parameter, $s\in(0,1),$ $N>2s,$ $2^*_s=\frac{2N}{N-2s},$ $0<\mu<\min\{2s,N-2s\}$ and $F(u)=\int^t_0f(\tau)d\tau$. The fractional Laplacian $(-\Delta)^s$ is defined by
\begin{equation*}
(-\Delta)^s\Psi(x)=C_{N,s}P.V.\int_{\R^N}\frac{\Psi(x)-\Psi(y)}{|x-y|^{N+2s}}dy,\ \ \Psi\in\mathcal{S}(\R^N),
\end{equation*}
where $P.V.$ stands for the Cauchy principal value, $C_{N,s}$ is a normalized constant, $\mathcal{S}(\R^N)$ is the Schwartz space of rapidly decaying functions, $s\in(0,1)$. As $\varepsilon$ goes to zero in \eqref{eq1.1}, the existence and asymptotic behavior of the solutions of the singularly perturbed equation \eqref{eq1.1} is known as the semi-classical problem. It was used to describe the transition between of Quantum Mechanics and Classical Mechanics.
\par
Our motivation to study \eqref{eq1.1} mainly comes from the fact that solutions $u(x)$ of \eqref{eq1.1} corresponding to standing wave solutions $\Psi(x,t)=e^{-iEt/\varepsilon}u(x)$ of the following time-dependent fractional Schr\"{o}dinger equation
\begin{equation}\label{eq1.2}
 i\varepsilon\frac{\partial\Psi}{\partial t}=\varepsilon^{2s}(-\Delta)^s\Psi+(V(x)+E)\Psi-(K(x)*|G(\Psi)|)g(\Psi)\ \  (x,t)\in{\R^N\times\R}
\end{equation}
where $i$ is the imaginary unit, $\varepsilon$ is related to the Planck constant. Equations of the type \eqref{eq1.2} was introduced by Laskin (see \cite{Laskin2000Physics,Laskin2002physics}) and come from an expansion of the Feynman path integral from Brownian-like to L\'{e}vy-like quantum mechanical paths. With variational methods, this kind equation has been studied widely, we refer to \cite{Valdinocibook,Nezza2012BDSM,Yang-Zhao18AML} and the references therein.
\par
When $s=1$, the equation \eqref{eq1.1} turns out to be the Choquard equation
\begin{equation}\label{eq1.3}
-\varepsilon^2\Delta u+V(x)u=\varepsilon^{\mu-N}(\frac{1}{|x|^\mu}\ast F(u))f(u)+|u|^{2^\ast-2}u\ \ \text{in}\ \R^N,
\end{equation}
The existence, multiplicity and concentration of solutions for \eqref{eq1.3} has been widely investigated.
On one hand, some people have studied the classical problem, namely $\varepsilon=1$ in \eqref{eq1.3}. When $V=1$ and $F(u)=\frac{|u|^q}{q}$, \eqref{eq1.3} covers in particular the Choquard-Pekar equation
\begin{equation}\label{eq1.4}
-\Delta u+u=(\int_{\R^N}\frac{1}{|x|^\mu}*|u|^qdy)|u|^{q-2}u\ \ \text{in}\ \R^N.
\end{equation}
The case $N=3$, $q=2$ and $\mu=1$ came from Pekar \cite{Pekar} in 1954 to describe the quantum mechanics of a polaron at rest. In 1976 Choquard used \eqref{eq1.4} to describe an electron trapped in its own hole, in a certain approximation to Hartree-Fock theory of one component plasma \cite{Lieb1976SAM}. In this context \eqref{eq1.4} is also known as the nonlinear Schr\"{o}dinger-Newton equation. By using critical point theory, Lions \cite{Lions80NA} obtained  the existence of infinitely many radialy symmetric solutions in $H^1(\R^N)$ and Ackermann \cite{Ackermann04MZ} prove the existence of infinitely many geometrically distinct weak solutions for a general case. For the properties of the ground state solutions, Ma and Zhao \cite{Ma-Zhao10ARMA} proved that every positive solution is radially symmetric and monotone decreasing about some point for the generalized Choquard equation \eqref{eq1.4} with $q\geq2$. Later, Moroz and Van Schaftingen \cite{Moroz-Schaftingen13JFA,Moroz-Schaftingen15TAMC} eliminated this
restriction and showed the regularity, positivity and radial symmetry of the ground
states for the optimal range of parameters, and also derived that these solutions
decay asymptotically at infinity.

\par
On the other hand, some people have focused on the semiclassical problem, namely, $\varepsilon\rightarrow0$ in \eqref{eq1.3}. The question of the existence of semiclassical solutions for the non-local problem \eqref{eq1.3} has been posed in \cite{Ambrosetti-Malchiodi07CM}. Note that if $v$ is a solution of \eqref{eq1.3} for $x_0\in\R^N$, then $u=v(\varepsilon x+x_0)$ verifies
\begin{equation}\label{eq1.5}
-\Delta u+V(\varepsilon x+x_0)u=(\int_{\R^N}\frac{G(u(y))}{|x-y|^\mu}dy)g(u)\ \ \text{in}\ \R^N,
\end{equation}
which means some convergence of the family of solutions to a solution $u_0$ of the limit problem
\begin{equation}\label{eq1.6}
-\Delta u+V(x_0)u=(\int_{\R^N}\frac{G(u(y))}{|x-y|^\mu}dy)g(u)\ \ \text{in}\ \R^N.
\end{equation}
For this case when $N=3, \mu=1$ and $G(u)=|u|^2$, Wei and Winter \cite{Wei-Winter09JMP} constructed families of solutions by a Lyapunov-Schmidt-type reduction when $\inf V>0$. This method of construction depends on the existence, uniqueness and non-degeneracy up to translations of the positive solution of the limiting equation \eqref{eq1.6}, which is a difficult problem that has only been fully solved in the case when $N=3, \mu=1$ and $G(u)=|u|^2$.
Moroz and Van Schaftingen \cite{Moroz-Schaftingen15CVPDE} used variational methods to develop a novel non-local penalization technique to show that equation \eqref{eq1.3} with $G(u)=|u|^q$ has a family of solutions concentrated at the local minimum of $V$, with $V$ satisfying some additional assumptions at infinity.
In addition, Alves and Yang \cite{Alves-Yang16PRSE} investigated the multiplicity and concentration behaviour of solutions for a quasi-linear Choquard equation via the penalization method.
Very recently, in an interesting paper, Alves et al. \cite{Alves-Yang17JDE} study \eqref{eq1.4} with a critical growth, they consider the critical problem with both linear potential and nonlinear potential, and showed the existence, multiplicity and concentration behavior of solutions when the linear potential has a global minimum or maximum.
\par
On the contrary, the results about fractional Choquard equation \eqref{eq1.1} are relatively few. Recently, d'Avenia, Siciliano and Squassina \cite{d'AveniaMMMAS15} studied the existence, regularity and asymptotic of the solutions for the following fractional Choquard equation
\begin{equation}\label{eq1.7}
(-\Delta)^su+\omega u=(\int_{\R^N}\frac{|u(y)|^q}{|x-y|^\mu}dy)|u|^{q-2}u\ \ \text{in}\ \R^N,
\end{equation}
where $\omega>0$, $\frac{2N-\mu}{N}<q<\frac{2N-\mu}{N-2s}$. Shen, Gao and Yang \cite{Shen-Gao-YangMMAS16} obtain the existence of ground states for \eqref{eq1.7} with general nonlinearities by using variational methods. Chen and Liu \cite{Chen-Liu16Non} studied \eqref{eq1.7} with nonconstant linear potential and proved the existence of ground states without any symmetry property. For critical problem, Wang and Xiang \cite{Wang-Xiang16EJDE} obtain the existence of infinitely many nontrivial solutions and the Brezis-Nirenberg type results can be founded in \cite{Sreenadh17NODEA}. For other existence results we refer to \cite{Miyagaki17NA,Bhattarai17JDE,Gao-Tang-Chen18ZMAP,Guo-Hu18MMAS,Ma-Zhang17NA,Wang-Yang18BVP,Zhang-Wu18JMAA} and the references therein.
\par
For the concentration behavior of solutions, we note that the only works concerning the concentration behavior of solutions come from \cite{yang,Zhang-Wang-Zhang19CPAA}.
Assuming the global condition on $V\in C(\R^N,\R)$:
\begin{itemize}
\item[$(V_0)$] $0<V_0:=\inf\limits_{x\in\R^N}V(x)<\liminf\limits_{|x|\rightarrow\infty}V(x):=V_{\infty}<+\infty$,
\end{itemize}
which is firstly introduced by Rabinowitz \cite{Rabinowitz1992} in the study of the nonlinear Schr\"{o}dinger equations. By using the method of Nehari manifold developed by Szulkin and Weth \cite{Szulkin2010}, Zhang, Wang and Zhang in \cite{Zhang-Wang-Zhang19CPAA} obtained the multiplicity and concentration of positive solutions for the following  fractional Choquard equation
\begin{equation}\label{eq1.8}
\varepsilon^{2s}(-\Delta)^su+V(x)u=\varepsilon^{\mu-3}(\int_{\R^3}\frac{|u(y)|^{2^*_{\mu,s}}+F(u(y))}{|x-y|^\mu}dy)(|u|^{2^*_{\mu,s}-2}u+\frac{1}{2^*_{\mu,s}}f(u))\ \ \text{in}\ \R^3,
\end{equation}
where $\varepsilon>0$, $0<\mu<3$, $F$ is the primitive function of $f$. Different to the global condition $(V_0)$, Yang in \cite{yang} establish the existence and concentration of positive solutions for the fractional Choquard equation \eqref{eq1.8} when the potential function $V\in C(\R^3,\R)$ satisfies the following local conditions \cite{Pino1996CVPDE}:
\begin{itemize}
\item[$(V_1)$] There is constant $V_0>0$ such that $V_0=\inf\limits_{x\in\R^3}V(x)$.
\item[$(V_2)$] There is a bounded domain $\Omega$ such that
\begin{equation*}
V_0<\min_{\partial\Omega}V.
\end{equation*}
\end{itemize}
\par
Note that in \eqref{eq1.8}, the critical term is involved in the convolution-type nonlinearity, which is totally different from our problem \eqref{eq1.1}. It is natural to ask how about the concentration behavior of solutions of \eqref{eq1.1} as $\varepsilon\to 0^+$? And how about the influence of the potential on the multiplicity of solutions? However, to the best of our knowledge, it seems that these two problems were not considered in literatures before. In this paper, we are concerned with the multiplicity and concentration property of nontrivial nonnegative solutions to \eqref{eq1.1}, and we will give some answers to the above questions.
\par
Concerning the continuous function $f\in C(\R,\R)$, we assume that $f(t)=0$ for $t<0$ and satisfies the following conditions:
\begin{itemize}
\item[$(f_1)$] $\lim\limits_{t\rightarrow 0}\frac{f(t)}{t}=0$.
\item[$(f_2)$] $\exists$ $ q\in(\frac{2N-\mu}{N},\frac{2N-\mu}{N-2s})$ such that $\lim\limits_{t\rightarrow\infty}\frac{f(t)}{t^{q-1}}=0.$
\item[$(f_3)$] $\frac{f(t)}{t}$ is increasing for every $t>0$.
\item[$(f_4)$] $\exists$ $\sigma\in(q_{_{N}},\frac{2N-\mu}{N-2s}),$ $C>0$ $s.t.$ $f(t)\geq ct^{\sigma-1}$ for all $t\in\R^+,$ where $q_{_{N}}=\max\{\frac{2N-2s}{N-2s},\frac{N+2s}{N-2s}\}.$
\end{itemize}
Then we state our main result as follows.
\begin{theorem}\label{Thm1.1}
Suppose $(V_0)$ hold and $f$ satisfies $(f_1)-(f_4)$. Then there exists an $\varepsilon^*>0$ such that for any $\varepsilon\in(0,\varepsilon^*),$ the problem \eqref{eq1.1} possesses a nontrivial nonnegative ground state solution.
\end{theorem}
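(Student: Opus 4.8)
The plan is to pass to a rescaled problem and run a mountain-pass scheme, the delicate point being to restore the compactness destroyed by the critical exponent $2^\ast_s$. Setting $v(x)=u(\varepsilon x)$ transforms \eqref{eq1.1} into
\[
(-\Delta)^s v+V(\varepsilon x)v=\Big(\frac{1}{|x|^\mu}\ast F(v)\Big)f(v)+|v|^{2^\ast_s-2}v,\qquad x\in\R^N,
\]
so I would work in the Hilbert space $H^s_\varepsilon:=(H^s(\R^N),\|\cdot\|_\varepsilon)$ with $\|v\|_\varepsilon^2=C_{N,s}\iint_{\R^{2N}}\frac{|v(x)-v(y)|^2}{|x-y|^{N+2s}}\,dx\,dy+\int_{\R^N}V(\varepsilon x)v^2\,dx$, which is an equivalent norm since $V_0>0$. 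By $(f_1)$--$(f_2)$, the Hardy--Littlewood--Sobolev inequality and the embedding $H^s(\R^N)\hookrightarrow L^p(\R^N)$ for $p\in[2,2^\ast_s]$, the functional
\[
J_\varepsilon(v)=\tfrac12\|v\|_\varepsilon^2-\tfrac12\int_{\R^N}\Big(\frac{1}{|x|^\mu}\ast F(v)\Big)F(v)\,dx-\tfrac{1}{2^\ast_s}\int_{\R^N}|v|^{2^\ast_s}\,dx
\]
is well defined, of class $C^1$, and its critical points are weak solutions. I would then check the mountain-pass geometry: $(f_1)$ together with the HLS/Sobolev estimates gives $J_\varepsilon\ge\alpha>0$ on a small sphere, while $(f_4)$ (note $\sigma>q_N>2$, so $F(t)\ge\tfrac{c}{\sigma}t^\sigma$ makes the convolution term grow super-quadratically) gives $J_\varepsilon(tv)\to-\infty$ for any fixed $v\ge 0$, $v\not\equiv 0$. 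Let $c_\varepsilon$ denote the resulting mountain-pass level.

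The \textbf{main obstacle} is that $J_\varepsilon$ fails the Palais--Smale condition at arbitrary levels because of the critical term; the standard cure is a two-sided estimate. First, $(\mathrm{PS})_c$ holds for every $c$ below a threshold $c^\ast$ depending on the best fractional Sobolev constant $S$ and on the least energy $m_{V_\infty}$ of the autonomous ``problem at infinity'' $(-\Delta)^s w+V_\infty w=(\frac{1}{|x|^\mu}\ast F(w))f(w)+|w|^{2^\ast_s-2}w$: given a $(\mathrm{PS})_c$ sequence $\{v_n\}$, one shows it is bounded using $(f_3)$ (which yields $F(t)\le\tfrac12 f(t)t$, hence a Nehari-type inequality from $J_\varepsilon(v_n)-\tfrac12 J_\varepsilon'(v_n)v_n$), and then analyzes its weak limit via the Brezis--Lieb lemma and its nonlocal analogue for the Choquard term, together with a concentration--compactness argument; compactness can only be lost through a bubble of energy $\ge\tfrac{s}{N}S^{N/(2s)}$ or through mass escaping to infinity and solving the limit problem, carrying energy $\ge m_{V_\infty}$. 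Here the global condition $(V_0)$, $V_0<V_\infty$, is what leaves room below $m_{V_\infty}$.

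Second, and this is the genuinely delicate computation, one must show $c_\varepsilon<c^\ast$ for $\varepsilon$ small. I would test $J_\varepsilon$ with a family built from the truncated Aubin--Talenti instantons $U_\delta$ (extremals of the fractional Sobolev inequality, cut off near a point where $V$ attains $V_0$ and rescaled) and estimate $\max_{t\ge 0}J_\varepsilon(tU_\delta)$ as $\delta\to 0$: the critical term contributes $\tfrac{s}{N}S^{N/(2s)}$ up to lower-order corrections, the potential term a controlled positive quantity, and the Choquard term --- bounded below via $f(t)\ge ct^{\sigma-1}$ --- a negative term whose order in $\delta$ is governed by $\sigma$. The precise range $\sigma\in(q_N,\tfrac{2N-\mu}{N-2s})$ with $q_N=\max\{\tfrac{2N-2s}{N-2s},\tfrac{N+2s}{N-2s}\}$ is exactly what forces this negative contribution to dominate the positive corrections, so that $c_\varepsilon<\tfrac{s}{N}S^{N/(2s)}$; together with the fact that $(V_0)$ keeps $c_\varepsilon$ strictly below $m_{V_\infty}$ for small $\varepsilon$, this yields $c_\varepsilon<c^\ast$.

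Finally, combining the two estimates with the mountain-pass theorem produces a nontrivial critical point $v_\varepsilon$ of $J_\varepsilon$ at level $c_\varepsilon$. That it is a ground state follows by comparing $c_\varepsilon$ with $\inf_{\mathcal N_\varepsilon}J_\varepsilon$ over the Nehari manifold, the two levels coinciding thanks to $(f_3)$. Nonnegativity follows by testing the equation with $v_\varepsilon^-=\min\{v_\varepsilon,0\}$ and using $f\equiv 0$ on $(-\infty,0)$ and $F\ge 0$, which forces $\|v_\varepsilon^-\|_\varepsilon=0$. Undoing the scaling, $u_\varepsilon(x)=v_\varepsilon(x/\varepsilon)$ is the desired nontrivial nonnegative ground state of \eqref{eq1.1} for every $\varepsilon\in(0,\varepsilon^\ast)$.
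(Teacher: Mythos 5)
Your overall plan (rescale, run a mountain pass on the Nehari manifold, use $(f_3)$ for boundedness and ground-state identification, restore compactness below a threshold, test with concentrated bubbles, take $u^-$ as a test function for nonnegativity) is the same skeleton as the paper's argument, and most of it is correct. But there is a genuine gap at the single most delicate point: how you actually force $c_\varepsilon$ below the compactness threshold, which here is $m_{V_\infty}$ (the mountain-pass level of the autonomous problem with potential $V_\infty$), not $\tfrac{s}{N}S^{N/(2s)}$.

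You propose to test $J_\varepsilon$ directly with truncated Aubin--Talenti instantons and conclude $c_\varepsilon<\tfrac{s}{N}S^{N/(2s)}$. That inequality is true, but it is \emph{strictly weaker} than what is needed: since the autonomous ground-state energy $m_{V_\infty}$ is itself strictly below $\tfrac{s}{N}S^{N/(2s)}$ (this is the paper's Lemma \ref{Lem3.5} applied with $\tau=V_\infty$), showing $c_\varepsilon<\tfrac{s}{N}S^{N/(2s)}$ does not let you invoke $(PS)_d$ at level $c_\varepsilon$. You then write that ``$(V_0)$ keeps $c_\varepsilon$ strictly below $m_{V_\infty}$'', but this is asserted with no mechanism, and it is precisely the step that does the work. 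In the paper, the bubble estimate is not applied to $J_\varepsilon$ at all; it is applied to the \emph{autonomous} functional $I_\tau$ to show $m_\tau<\tfrac{s}{N}S^{N/(2s)}$ and hence that $m_\tau$ is attained by some $u$. The actual comparison then goes: choose $\tau$ with $V_0<\tau<V_\infty$ (so $m_\tau<m_{V_\infty}$ by monotonicity), truncate the minimizer $u$ of $I_\tau$ to a compactly supported $\tilde u$ still satisfying $I_\tau(\tilde u)<m_{V_\infty}$, use continuity of $V$ to guarantee $V(\varepsilon x)<\tau$ on $\operatorname{supp}\tilde u$ for $\varepsilon$ small, hence $J_\varepsilon(t\tilde u)\le I_\tau(t\tilde u)$ for all $t\ge0$, and finally $c_\varepsilon\le\max_t J_\varepsilon(t\tilde u)\le I_\tau(\tilde u)<m_{V_\infty}$. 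Without this intermediate-potential-and-truncation device (or some replacement for it), your argument never reaches the level where Lemma \ref{lem3.10} applies, and the mountain-pass sequence cannot be shown to converge. Everything else in your sketch --- the HLS/Sobolev estimates for the mountain-pass geometry, boundedness of $(PS)$ sequences from $(f_3)$, the nonlocal Brezis--Lieb splitting, the Nehari characterization of the ground-state level, and the sign argument for nonnegativity --- matches the paper and is fine.
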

In order to describe the multiplicity, we first recall that, if $Y$ is a closed subset of a topological space $X$, the Ljusternik-Schnirelmann category $cat_{X}Y$ is the least number of closed and contractible sets in $X$ which cover $Y$.
Then we have our second result as follows.
\begin{theorem}\label{Thm1.2}
Suppose $(V_0)$ hold and $f$ satisfies $(f_1)-(f_4)$. Then for any $\delta>0$, there exists $\varepsilon_{\delta}>0$ such that for any $\varepsilon\in(0,\varepsilon_{\delta}),$ the problem \ref{eq1.1} has at least $cat_{\Lambda_{\delta}}(\Lambda)$ nontrivial nonnegative solutions. Moreover, if $u_{\varepsilon}$ denotes one of these solutions and $x_{\varepsilon}\in{\R^N}$ is its global maximum, then
\begin{equation*}
\lim\limits_{\varepsilon\rightarrow0}V(x_{\varepsilon})=V_0,
\end{equation*}
where $\Lambda:=\{x\in\R^N:V(x)=V_0\}$ and $\Lambda_{\delta}:=\{x\in\R^N:d(x,\Lambda)\leq\delta\}.$
\end{theorem}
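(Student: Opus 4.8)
The plan is to adapt the Lusternik--Schnirelmann scheme of Benci--Cerami and Cingolani--Lazzo to the fractional Choquard setting with critical growth, working on the Nehari manifold in the spirit of Szulkin--Weth. After the rescaling $u(x)=v_\varepsilon(x/\varepsilon)$, problem \eqref{eq1.1} becomes
\[
(-\Delta)^s u + V(\varepsilon x)u = \Big(\frac{1}{|x|^\mu}\ast F(u)\Big)f(u) + |u|^{2^*_s-2}u \quad\text{in }\R^N,
\]
with energy functional $J_\varepsilon$ on the Hilbert space $H_\varepsilon$ endowed with $\|u\|_\varepsilon^2=[u]_s^2+\int_{\R^N}V(\varepsilon x)u^2\,dx$; by $(f_1)$--$(f_3)$ this functional has mountain--pass geometry and its Nehari manifold $\mathcal N_\varepsilon$ is a $C^1$ submanifold radially homeomorphic to the unit sphere of $H_\varepsilon$, on which critical points of the restriction are critical points of $J_\varepsilon$. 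For a constant $a>0$ let $J_a$, $\mathcal N_a$ and $c_a:=\inf_{\mathcal N_a}J_a$ be the corresponding objects for the autonomous equation with $V\equiv a$; one checks $a\mapsto c_a$ is nondecreasing, so $(V_0)$ gives $c_{V_0}<c_{V_\infty}$. The three analytic ingredients I would establish first are: (i) the strict inequality $c_{V_0}<\frac{s}{N}S^{N/(2s)}$, $S$ the best fractional Sobolev constant --- the Brezis--Nirenberg type estimate, and exactly where $(f_4)$ and the threshold $q_{_{N}}=\max\{\frac{2N-2s}{N-2s},\frac{N+2s}{N-2s}\}$ are used, by testing $J_{V_0}$ on a truncated Talenti-type extremal and showing the lower bound $f(t)\ge ct^{\sigma-1}$ pulls the level strictly below the critical threshold; (ii) the Palais--Smale condition for $J_\varepsilon|_{\mathcal N_\varepsilon}$ at every level $c<\min\{c_{V_\infty},\tfrac{s}{N}S^{N/(2s)}\}$, via a fractional concentration--compactness/profile decomposition ruling out both vanishing (comparison with $c_{V_\infty}$) and the splitting off of a critical bubble; (iii) the fact that the ground state level $c_\varepsilon$ of $J_\varepsilon$ satisfies $c_\varepsilon\to c_{V_0}$ as $\varepsilon\to0^+$.

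Next I would build the two maps of the category argument. Fix a positive ground state $w\in\mathcal N_{V_0}$ of the limit problem, with polynomial decay (positivity and regularity as in Theorem \ref{Thm1.1}), a cut--off $\eta$, and for $y\in\Lambda$ set $\Psi_{\varepsilon,y}(x)=\eta(|\varepsilon x-y|)\,w\big(x-y/\varepsilon\big)$ and $\Phi_\varepsilon(y)=t_{\varepsilon,y}\Psi_{\varepsilon,y}\in\mathcal N_\varepsilon$, where $t_{\varepsilon,y}>0$ is the Nehari projection. A direct computation using the decay of $w$ gives $\limsup_{\varepsilon\to0}\sup_{y\in\Lambda}J_\varepsilon(\Phi_\varepsilon(y))\le c_{V_0}$, so for a suitable $h(\varepsilon)\to0$ the sublevel $\widetilde{\mathcal N}_\varepsilon:=\{u\in\mathcal N_\varepsilon:J_\varepsilon(u)\le c_{V_0}+h(\varepsilon)\}$ is nonempty and contains $\Phi_\varepsilon(\Lambda)$ for small $\varepsilon$. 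In the other direction I would prove a concentration lemma: if $u_{\varepsilon_n}\in\mathcal N_{\varepsilon_n}$ with $J_{\varepsilon_n}(u_{\varepsilon_n})\to c_{V_0}$, then there are $(y_n)\subset\R^N$ with $\varepsilon_n y_n\to$ some point of $\Lambda$ and $u_{\varepsilon_n}(\cdot+y_n)\to$ a ground state of the limit problem in $H^s$. With a barycenter-type map $\beta_\varepsilon:\widetilde{\mathcal N}_\varepsilon\to\R^N$ given by a normalized weighted average of $\varepsilon x$ against a fixed power of $|u|$, this lemma yields $\beta_\varepsilon(\widetilde{\mathcal N}_\varepsilon)\subset\Lambda_\delta$ for $\varepsilon$ small and $\beta_\varepsilon(\Phi_\varepsilon(y))\to y$ uniformly on $\Lambda$, so that $\beta_\varepsilon\circ\Phi_\varepsilon$ is homotopic to the inclusion $\Lambda\hookrightarrow\Lambda_\delta$; by the standard category inequality this forces $cat_{\widetilde{\mathcal N}_\varepsilon}(\widetilde{\mathcal N}_\varepsilon)\ge cat_{\Lambda_\delta}(\Lambda)$.

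Then I would invoke Lusternik--Schnirelmann theory on the complete $C^1$ manifold $\mathcal N_\varepsilon$: since $J_\varepsilon|_{\mathcal N_\varepsilon}$ is bounded below and, by (ii), satisfies $(PS)_c$ for every $c\le c_{V_0}+h(\varepsilon)<\min\{c_{V_\infty},\tfrac{s}{N}S^{N/(2s)}\}$ once $\varepsilon$ is small, it has at least $cat_{\widetilde{\mathcal N}_\varepsilon}(\widetilde{\mathcal N}_\varepsilon)\ge cat_{\Lambda_\delta}(\Lambda)$ critical points in $\widetilde{\mathcal N}_\varepsilon$. These are weak solutions of the rescaled equation; testing with the negative part and using $f(t)=0$ for $t\le0$ shows they are nonnegative, and undoing the rescaling, $v_\varepsilon(x)=u_\varepsilon(x/\varepsilon)$, gives at least $cat_{\Lambda_\delta}(\Lambda)$ nontrivial nonnegative solutions of \eqref{eq1.1}. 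For the concentration statement, let $u_\varepsilon$ be one such solution with $J_\varepsilon(u_\varepsilon)\le c_{V_0}+h(\varepsilon)$; by the concentration lemma, along any sequence $\varepsilon\to0$, $u_\varepsilon(\cdot+y_\varepsilon)\to w$ in $H^s$ with $\varepsilon y_\varepsilon\to x_0\in\Lambda$. A uniform $L^\infty$ bound (Moser iteration for the fractional operator, with the Choquard and critical terms controlled through $(f_1)$--$(f_2)$ and the Hardy--Littlewood--Sobolev inequality) together with a barrier argument giving uniform decay of $u_\varepsilon(\cdot+y_\varepsilon)$ away from the origin shows that the global maximum point $z_\varepsilon$ of $u_\varepsilon$ satisfies $|z_\varepsilon-y_\varepsilon|\le C$; hence the global maximum point of $v_\varepsilon$ is $x_\varepsilon=\varepsilon z_\varepsilon$ and $V(x_\varepsilon)\to V(x_0)=V_0$.

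I expect the main obstacle to be the interplay between the critical exponent and the nonlocal Choquard nonlinearity. Concretely, ingredient (i) is delicate: one must expand the Hardy--Littlewood--Sobolev energy of a truncated extremal and show that the contribution coming from $(f_4)$ strictly dominates the sign-indefinite errors produced by the truncation and by the convolution; the admissible range of $\sigma$, hence the appearance of $q_{_{N}}=\max\{\frac{2N-2s}{N-2s},\frac{N+2s}{N-2s}\}$, is dictated precisely by which of these error terms is largest in a given range of $N$ relative to $s$. The second delicate point is proving $(PS)_c$ in the stated range: one must run a fractional profile decomposition in which bubbles carry at least $\frac{s}{N}S^{N/(2s)}$ and nontrivial weak limits at infinity carry at least $c_{V_\infty}$, and exclude all bad scenarios simultaneously --- the nonlocal term requires care since the HLS functional is not additive under weak convergence, so a Brezis--Lieb type splitting for the convolution term is needed. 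Everything else (the two maps, the homotopy, the LS count, nonnegativity, and the $L^\infty$/decay estimates) is by now fairly standard and parallels the subcritical fractional Choquard analysis.
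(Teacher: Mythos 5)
Your proposal follows essentially the same route as the paper: rescaling, Szulkin--Weth Nehari-manifold reduction, the Brezis--Nirenberg estimate $m_{V_0}<\tfrac{s}{N}S^{N/(2s)}$ (Lemma~\ref{Lem3.5}), the Palais--Smale condition below $m_{V_\infty}$ via Brezis--Lieb splitting of the convolution term (Lemmas~\ref{lem3.8}--\ref{lem3.10}), the maps $\Phi_\varepsilon$ and the barycenter $\beta_\varepsilon$ with the concentration Lemma~\ref{lem4.4}, the Lusternik--Schnirelmann count, and Moser iteration plus uniform decay (Lemma~\ref{Lem4.10}) to locate the maxima. One small correction: since $f$ is only continuous, $\mathcal{N}_\varepsilon$ is not in general a $C^1$ submanifold --- that is precisely why the paper passes to $\psi_\varepsilon$ on the unit sphere $\mathcal{S}_\varepsilon$ via Lemma~\ref{lem3.3.1} rather than working with $J_\varepsilon|_{\mathcal{N}_\varepsilon}$ directly, though your reference to Szulkin--Weth indicates you have the right framework in mind.
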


\par
We shall use the method of Nehari manifold, concentration compactness principle and category theory to prove the main results. There are some difficulties in proving our theorems. The first difficulty is that the nonlinearity $f$ is only continuous, we need to prove the new Brezis-Lieb type Lemma for this kind of nonlinearity. The second one is the lack of compactness of the embedding of $H^s(\R^3)$ into the space $L^{2^*_s}(\R^3)$. We shall borrow the idea in \cite{Alves-Yang17JDE,Cassani-Zhang18ANA} to deal with the difficulties brought by the critical exponent. However, we require some new estimates, which are complicated because of the appearance of fractional Laplacian and the convolution-type nonlinearity.
\par
This paper is organized as follows. In section 2, besides describing the functional setting to study problem \eqref{eq1.1}, we give some preliminary Lemmas which will be used later. In section 3, we prove problem \eqref{eq1.1} has a ground state solution. Finally, we show the multiple of nontrivial nonnegative solutions and investigate its concentration behavior, which completes the proof Theorem \ref{Thm1.1}.
\par
\textbf{Notation.}~In this paper we make use of the following notations.
\begin{itemize}
\item[$\bullet$] For any $R>0$ and for any $x\in\R^N$, $B_{R}(x)$ denotes the ball of radius $R$ centered at $x$.
\item[$\bullet$]  $L^p(\mathbb{R}^N)$, $1\leq p<+\infty$ denotes the Lebesgue space with  the norm $\|u\|_p=\|u\|_{L^p(\R^N)}=(\int_{\mathbb{R}^N}|u|^pdx)^{\frac{1}{p}}$.
\item[$\bullet$] The letters $C,C_i$ stand for positive constants (possibly different from line to line).
\item[$\bullet$]  "$\rightarrow$" for the strong convergence and "$\rightharpoonup$" for the weak convergence.
\item[$\bullet$]  $u^+=\max\{u,0\}$ and $u^-=\min\{u,0\}$ denote the positive part and the negative part of a function $u$, respectively.
\end{itemize}

\section{Functional Setting}
Firstly, fractional Sobolev spaces are the convenient setting for our problem, so we will give some skrtchs of the fractional order Sobolev spaces and the complete introduction can be found in \cite{Nezza2012BDSM}. We recall that, for any $s\in(0,1)$, the fractional Sobolev space $H^s(\R^N)=W^{s,2}(\R^N)$ is defined as follows:
\begin{equation*}
H^s(\R^N)=\{u\in L^2(\R^N):\int_{\R^N}\big(|\xi|^{2s}|{\F(u)}|^2+|{\F(u)}|^2\big)d\xi<\infty\},
\end{equation*}
whose norm is defined as
\begin{equation*}
\|u\|^2_{H^{s}(\R^N)}=\int_{\R^N}\big(|\xi|^{2s}|{\F(u)}|^2+|{\F(u)}|^2\big)d\xi,
\end{equation*}
where $\F$ denotes the Fourier transform. We also define the homogeneous fractional Sobolev space $\mathcal{D}^{s,2}(\mathbb{R}^N)$ as the completion of $\mathcal{C}_0^\infty(\mathbb{R}^N)$ with respect to the norm
\begin{equation*}
\|u\|_{\mathcal{D}^{s,2}(\R^N)}:=\bigg(\iint_{\mathbb{R}^N\times \mathbb{R}^N}\frac{|u(x)-u(y)|^2}{|x-y|^{N+2s}}dxdy\bigg)^{\frac{1}{2}}=[u]_{H^s(\R^N)}.
\end{equation*}
\par
The embedding $\mathcal{D}^{s,2}(\R^N)\hookrightarrow L^{2^*_s}(\R^N)$ is continuous and for any $s\in(0,1)$, there exists a best constant $S_s>0$ such that
\begin{equation*}
S_s:=\inf_{u\in\mathcal{D}^{s,2}(\R^N)}\frac{\|u\|^2_{\mathcal{D}^{s,2}(\R^N)}}{\|u\|_{L^{2^*_s}(\R^N)}^2}
\end{equation*}
According to \cite{Cotsiolis-Tavoularis04JMAA}, $S_s$ is attained by
\begin{equation}\label{eq2.1}
u_0(x)=C\big(\frac{b}{b^2+|x-a|^2}\big)^{\frac{N-2s}{2}},\ \ x\in\R^N,
\end{equation}
where $C\in\R$, $b>0$ and $a\in\R^N$ are fixed parameters.
\par
The fractional laplacian, $(-\Delta)^s u$, of a smooth function $u:\R^N\rightarrow\R$, is defined by
$$\F((-\Delta)^s u)(\xi)=|\xi|^{2s}\F(u)(\xi),\ \  \xi\in\R^N.$$
Also $(-\Delta)^s u$ can be equivalently represented \cite{Nezza2012BDSM} as
$$(-\Delta)^s u(x)=-\frac{1}{2}C(N,s)\int_{\R^N}\frac{u(x+y)+u(x-y)-2u(x)}{|y|^{N+2s}}dy,\ \forall x\in\R^N$$
where
$$C(N,s)=\bigg(\int_{\R^N}\frac{(1-cos\xi_1)}{|\xi|^{N+2s}}d\xi\bigg)^{-1},\ \xi=(\xi_1,\cdots,\xi_N).$$
Also, by the Plancherel formular in Fourier analysis, we have
$$[u]^2_{H^{s}(\R^N)}=\frac{2}{C(N,s)}\|(-\Delta)^{\frac{s}{2}}u\|^2_{L^2(\R^N)}.$$
For convenience, we will omit the normalization constant in the following. As a consequence, the norms on $H^s(\R^N)$ defined below
$$\aligned
&u\longmapsto\bigg(\int_{\R^N}|u|^2dx+\iint_{\R^N\times\R^N}\frac{|u(x)-u(y)|^2}{|x-y|^{N+2s}}dxdy\bigg)^{\frac{1}{2}};\\
&u\longmapsto\bigg(\int_{\R^N}(|\xi|^{2s}|{\F(u)}|^2+|{\F(u)}|^2)d\xi\bigg)^{\frac{1}{2}};\\
&u\longmapsto\bigg(\int_{\R^N}|u|^2dx+\|(-\Delta)^{\frac{s}{2}}u\|^2_{L^2(\R^N)}\bigg)^{\frac{1}{2}}.\\
\endaligned $$
are equivalent.
\par
Making the change of variable $x\mapsto \varepsilon x$, we can rewrite the equation \eqref{eq1.1} as the following equivalent form
\begin{equation}\label{eq2.3}
(-\Delta)^su+V(\varepsilon x)u=(\frac{1}{|x|^\mu}\ast F(u))f(u)+|u|^{2^\ast_s-2}u\ \ \text{in}\ \R^N,
\end{equation}
If $u$ is a solution of the equation \eqref{eq2.3}, then $v(x):=u(\frac{x}{\varepsilon})$ is a solution of the equation \eqref{eq1.1}. Thus, to study the equation \eqref{eq1.1}, it suffices to study the equation \eqref{eq2.3}.
In view of the presence of potential $V(x)$, we introduce the subspace
\begin{equation*}
H_\varepsilon=\bigg\{u\in H^s(\mathbb{R}^N):\int_{\mathbb{R}^N}V(\varepsilon x)u^2dx<+\infty\bigg\},
\end{equation*}
which is a Hilbert space equipped with the inner product
\begin{equation*}
(u,v)_{H_\varepsilon}=\int_{\mathbb{R}^N}(-\Delta)^{\frac{s}{2}} u(-\Delta)^{\frac{s}{2}} vdx+\int_{\mathbb{R}^N}V(\varepsilon x)uvdx,
\end{equation*}
and the norm
\begin{equation*}
\|u\|_{H_\varepsilon}^2=\int_{\mathbb{R}^N}|(-\Delta)^{\frac{s}{2}} u|^2dx+\int_{\mathbb{R}^N}V(\varepsilon x)u^2dx.
\end{equation*}
We denote $\|\cdot\|_{H_\varepsilon}$ by $\|\cdot\|_\varepsilon$ in the sequel for convenience. The energy functional corresponding to equation \eqref{eq2.3} is $$E_{\varepsilon}(u)=\frac{1}{2}\|u\|^2_{\varepsilon}-\frac{1}{2}\int_{\R^N}(\frac{1}{|x|^{\mu}}\ast F(u))F(u)dx-\frac{1}{2^*_s}\int_{\R^N}|u|^{2^*_s}dx.$$
Since we are interested in the nontrivial nonnegative solutions, we consider the following functional
$$J_{\varepsilon}(u)=\frac{1}{2}\|u\|^2_{\varepsilon}-\frac{1}{2}\int_{\R^N}(\frac{1}{|x|^{\mu}}\ast F(u^+))F(u^+)dx-\frac{1}{2^*_s}\int_{\R^N}|u^+|^{2^*_s}dx.$$
Moreover, $J_{\varepsilon}(u)\in C^1(H^s,\R^N),$
\begin{equation*}
\begin{split}
\langle J'_{\varepsilon}(u),\varphi\rangle=&\int\int_{\R^{N}\times\R^N}\frac{u(x)-u(y)}{|x-y|^{N+2s}}(\varphi(x)-\varphi(y))dxdy+\int_{\R^N}V(\varepsilon x)u\varphi dx\\
&-\int_{\R^N}(\frac{1}{|x|^{\mu}}\ast F(u^+))f(u^+)\varphi dx-\int_{\R^N}|u^+|^{2^*_s-2}u\varphi dx.
\end{split}
\end{equation*}
\par
We collect the following useful result.
\begin{lemma}\label{Lem2.1}
Let $s\in(0,1)$ and $N>2s.$ Then there exists a sharp constant $C_\ast=C(N,s)>0$ such that for any $u\in H^s(\R^N)$ $$\|u\|^2_{L^{2^{\ast}_s}(\R^N)}\leq C^{-1}_{*}[u]^2_{H^s(\R^N)}.$$
Moreover $H^s(\R^N)$ is continuously embedded in $L^q(\R^N)$ for any $q\in[2,2^*_s]$ and compactly in $L^q_{loc}(\R^N)$ for any $q\in [2,2^*_s).$
\end{lemma}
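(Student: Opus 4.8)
The plan is to prove the three assertions in turn, using the Fourier description of the seminorm $[\cdot]_{H^s}$, the Hardy--Littlewood--Sobolev (HLS) inequality, an elementary interpolation of Lebesgue norms, and the Fr\'echet--Kolmogorov compactness criterion. For the Sobolev inequality I would first recall that, up to the normalization constant $C(N,s)$ which we suppress, $[u]^2_{H^s(\R^N)}=\|(-\Delta)^{s/2}u\|^2_{L^2(\R^N)}=\int_{\R^N}|\xi|^{2s}|\F(u)|^2\,d\xi$, and set $g:=(-\Delta)^{s/2}u$. On the Fourier side $\F(u)=|\xi|^{-s}\F(g)$, so $u=I_s g$, where $I_s$ is the Riesz potential of order $s$ (convolution with a multiple of $|x|^{s-N}$). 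Since $\frac{1}{2^*_s}=\frac12-\frac sN$, the HLS inequality gives $\|u\|_{L^{2^*_s}(\R^N)}=\|I_s g\|_{L^{2^*_s}(\R^N)}\le C\|g\|_{L^2(\R^N)}=C\,[u]_{H^s(\R^N)}$, first for $u\in\mathcal S(\R^N)$ and then for all $u\in H^s(\R^N)$ by density; this is the displayed inequality with some admissible constant $C_*^{-1}$. That the best such $C_*=C(N,s)$ is attained precisely by the profiles \eqref{eq2.1} is the theorem of \cite{Cotsiolis-Tavoularis04JMAA}, which I would simply invoke.

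For the continuous embedding $H^s(\R^N)\hookrightarrow L^q(\R^N)$ with $2\le q\le 2^*_s$, the endpoints $q=2$ and $q=2^*_s$ are immediate (the first from the definition of $\|\cdot\|_{H^s}$, the second from the inequality just proved together with $[u]^2_{H^s}\le\|u\|^2_{H^s}$). For $q\in(2,2^*_s)$ I would pick $\theta\in(0,1)$ with $\frac1q=\frac{1-\theta}{2}+\frac{\theta}{2^*_s}$ and use the interpolation inequality $\|u\|_{L^q}\le\|u\|_{L^2}^{1-\theta}\|u\|_{L^{2^*_s}}^{\theta}\le C\|u\|_{H^s}$, the last factor being controlled by the Sobolev inequality.

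For the local compactness, let $\Omega\subset\R^N$ be bounded and $(u_n)$ bounded in $H^s(\R^N)$. By Plancherel and the elementary estimate $|e^{i\xi\cdot h}-1|^2\le 4(|\xi|\,|h|)^{2s}$ (checked by separating $|\xi|\,|h|\le1$ from $|\xi|\,|h|>1$, using $s\in(0,1)$), one has $\|u_n(\cdot+h)-u_n\|^2_{L^2(\R^N)}\le 4|h|^{2s}[u_n]^2_{H^s}$, so the $L^2$-translates of $(u_n)$ are equicontinuous uniformly in $n$; since $|\Omega|<\infty$ provides tightness, the Fr\'echet--Kolmogorov--Riesz criterion yields a subsequence with $u_n\to u$ in $L^2(\Omega)$. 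Then for $q\in(2,2^*_s)$ and the same $\theta$, $\|u_n-u\|_{L^q(\Omega)}\le\|u_n-u\|^{1-\theta}_{L^2(\Omega)}\|u_n-u\|^{\theta}_{L^{2^*_s}(\Omega)}\to0$, because the first factor vanishes and the second is bounded; this gives the claimed compactness (the case $q=2$ being already contained in the Fr\'echet--Kolmogorov step).

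The only genuinely deep ingredient is the sharp-constant-and-extremals part of the first step, which I would import verbatim from \cite{Cotsiolis-Tavoularis04JMAA}. Were a fully self-contained argument required, the main effort would instead go into proving the HLS-type bound and the translation estimate directly from the Gagliardo double integral via a near-diagonal/far-field splitting, which is somewhat more delicate than the Fourier-side computation sketched above.
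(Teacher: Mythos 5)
The paper itself gives no proof of Lemma \ref{Lem2.1}: it is stated as a collected fact from the standard fractional Sobolev space literature (the Hitchhiker's guide \cite{Nezza2012BDSM} and, for the value of the sharp constant and its extremals, \cite{Cotsiolis-Tavoularis04JMAA}), so there is nothing in the text to compare against line by line. Your argument is a correct and essentially standard self-contained route to the result: the Fourier-side identification $[u]^2_{H^s}=\|(-\Delta)^{s/2}u\|^2_{L^2}$ plus the Hardy--Littlewood--Sobolev inequality for the Riesz potential $I_s$ (with $\tfrac1{2^*_s}=\tfrac12-\tfrac sN$) gives the Sobolev inequality; interpolation between $L^2$ and $L^{2^*_s}$ gives the continuous embeddings for intermediate $q$; and the Plancherel translation estimate $\|u(\cdot+h)-u\|^2_{L^2}\le 4|h|^{2s}[u]^2_{H^s}$, obtained from $|e^{i\theta}-1|^2\le 4|\theta|^{2s}$, combined with the Riesz--Fr\'echet--Kolmogorov criterion on bounded sets and a final interpolation, gives the local compactness for $q\in[2,2^*_s)$. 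This is the same circle of ideas used in the cited references; the only real difference is cosmetic, namely whether one works on the Fourier side or directly with the Gagliardo double integral, and you correctly flag the sharp-constant identification as the one imported, nonelementary ingredient. I see no gap.

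Two small remarks that would tighten the write-up. First, when passing from the $L^2(\Omega)$ convergence to $L^q(\Omega)$ convergence you need $u\in L^{2^*_s}(\Omega)$ so that $\|u_n-u\|_{L^{2^*_s}(\Omega)}$ is bounded; this follows from Fatou's lemma along the a.e.\ convergent subsequence (or from weak $H^s$ convergence), and is worth saying. Second, the constant normalization is slightly different in the paper (the Gagliardo seminorm is used without the dimensional constant $C(N,s)$), so ``sharp constant'' here is sharp for that normalization; your invocation of \cite{Cotsiolis-Tavoularis04JMAA} is for the corresponding normalized quotient, which is consistent with how the paper uses it.
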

\begin{lemma}\label{Lem2.2}
Let $N>2s,$ If $\{u_n\}$ is a bounded sequence in $H^s(\R^N)$ and if $$\lim_{n\rightarrow\infty}\sup_{y\in \R^N}\int_{B_R(y)}|u_n|^2dx=0$$
where $R>0,$ then $u_n\rightarrow 0$ in $L^t(\R^N)$ for all $t\in[2,2^*_s).$
\end{lemma}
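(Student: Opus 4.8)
The plan is to prove the assertion for every $t\in(2,2^*_s)$ by a localization argument of P.-L.\ Lions type, adapted to the fractional Sobolev setting. Fix such a $t$ and choose $\lambda\in(0,1)$ with $\tfrac1t=\tfrac{1-\lambda}{2}+\tfrac{\lambda}{2^*_s}$. I would take a countable cover $\R^N=\bigcup_i B_R(y_i)$ of bounded overlap, i.e.\ there is $\ell=\ell(N)\in\N$ such that every point of $\R^N$ lies in at most $\ell$ of the balls $B_R(y_i)$; then
\begin{equation*}
\delta_n:=\sup_i\int_{B_R(y_i)}|u_n|^2\,dx\le\sup_{y\in\R^N}\int_{B_R(y)}|u_n|^2\,dx\longrightarrow 0 .
\end{equation*}

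On each ball $B_i:=B_R(y_i)$, the interpolation inequality for $L^p$-norms together with the fractional Sobolev inequality on $B_i$ (a consequence of Lemma~\ref{Lem2.1} via an extension operator, with constant independent of $i$ by translation invariance) gives
\begin{equation*}
\int_{B_i}|u_n|^t\,dx\le\|u_n\|_{L^2(B_i)}^{(1-\lambda)t}\,\|u_n\|_{L^{2^*_s}(B_i)}^{\lambda t}\le C\,\|u_n\|_{L^2(B_i)}^{(1-\lambda)t}\,\|u_n\|_{H^s(B_i)}^{\lambda t},
\end{equation*}
where $\|u_n\|_{H^s(B_i)}^2=\int_{B_i}|u_n|^2\,dx+\iint_{B_i\times B_i}\frac{|u_n(x)-u_n(z)|^2}{|x-z|^{N+2s}}\,dx\,dz$. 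Summing over $i$ and using the bounded overlap of the cover yields
\begin{equation*}
\int_{\R^N}|u_n|^t\,dx\le\sum_i\int_{B_i}|u_n|^t\,dx\le C\sum_i\|u_n\|_{L^2(B_i)}^{(1-\lambda)t}\,\|u_n\|_{H^s(B_i)}^{\lambda t},
\end{equation*}
together with $\sum_i\|u_n\|_{L^2(B_i)}^2\le\ell\|u_n\|_{L^2(\R^N)}^2$ and $\sum_i\|u_n\|_{H^s(B_i)}^2\le\ell\|u_n\|_{H^s(\R^N)}^2=:M^2$ (the latter because each pair $(x,z)$ lies in at most $\ell$ of the products $B_i\times B_i$, and $\{u_n\}$ is bounded in $H^s$).

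It then remains to show that this series is $o(1)$, which I would do in two cases. If $\lambda t\ge 2$, bound $\|u_n\|_{L^2(B_i)}^{(1-\lambda)t}\le\delta_n^{(1-\lambda)t/2}$ and $\|u_n\|_{H^s(B_i)}^{\lambda t}=\big(\|u_n\|_{H^s(B_i)}^2\big)^{\lambda t/2}\le M^{\lambda t-2}\|u_n\|_{H^s(B_i)}^2$, so that the series is $\le C\,\delta_n^{(1-\lambda)t/2}\to 0$ (note $(1-\lambda)t>0$). If $\lambda t<2$, split $\|u_n\|_{L^2(B_i)}^{(1-\lambda)t}=\|u_n\|_{L^2(B_i)}^{t-2}\,\|u_n\|_{L^2(B_i)}^{2-\lambda t}$ (the two exponents are nonnegative and sum to $(1-\lambda)t$), bound the first factor by $\delta_n^{(t-2)/2}$, and apply Hölder's inequality with exponents $\tfrac{2}{2-\lambda t}$ and $\tfrac{2}{\lambda t}$ to the remainder:
\begin{equation*}
\sum_i\|u_n\|_{L^2(B_i)}^{2-\lambda t}\,\|u_n\|_{H^s(B_i)}^{\lambda t}\le\Big(\sum_i\|u_n\|_{L^2(B_i)}^2\Big)^{\frac{2-\lambda t}{2}}\Big(\sum_i\|u_n\|_{H^s(B_i)}^2\Big)^{\frac{\lambda t}{2}}\le C,
\end{equation*}
so the series is $\le C\,\delta_n^{(t-2)/2}\to 0$. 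In either case $u_n\to 0$ in $L^t(\R^N)$; since $t\in(2,2^*_s)$ was arbitrary, this completes the proof.

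The step I expect to be the main obstacle is the last one. The tempting shortcut — pulling the vanishing quantity $\delta_n$ out of the whole sum and controlling the remainder by a power of $\sum_i\int_{B_i}|u_n|^{2^*_s}\le C\|u_n\|_{L^{2^*_s}(\R^N)}^{2^*_s}$ — fails, because that power is strictly less than $1$ and such an $\ell^p$-type series with $p<1$ need not converge. Interpolating the local $L^t$-norm against the \emph{local} $H^s$-norm (rather than against a global $L^{2^*_s}$-bound) is exactly what repairs this: both $\sum_i\|u_n\|_{L^2(B_i)}^2$ and $\sum_i\|u_n\|_{H^s(B_i)}^2$ are then finite and uniformly bounded, and the local $H^s$-norms are uniformly bounded by the single constant $M$, which is what makes the case $\lambda t\ge 2$ linearizable and the case $\lambda t<2$ amenable to the Hölder pairing. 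Getting the bookkeeping right — how much of the local $L^2$-mass to peel off as a positive power of $\delta_n$ while leaving just enough to pair with the $H^s$-term — is the only genuinely delicate point.
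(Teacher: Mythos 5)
The paper states Lemma~\ref{Lem2.2} without proof, treating it as a known fractional analogue of Lions' vanishing lemma, so there is no proof of record to compare yours against; what I can assess is correctness. Your argument is the standard Lions localization scheme transported to the nonlocal setting: cover $\R^N$ by balls of radius $R$ with finite overlap constant $\ell$, interpolate the local $L^t$-norm between $L^2$ and $L^{2^*_s}$, control $\|u_n\|_{L^{2^*_s}(B_i)}$ by the restricted Gagliardo $H^s(B_i)$-norm via a translation-invariant extension operator, and sum using the bounded overlap, which also gives $\sum_i \|u_n\|_{H^s(B_i)}^2 \le \ell\|u_n\|_{H^s(\R^N)}^2$ because a pair $(x,z)\in B_i\times B_i$ forces $x\in B_i$, and that happens for at most $\ell$ indices. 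The two-case split on whether $\lambda t \ge 2$ or $\lambda t < 2$ is exactly what is needed, the H\"older pairing with conjugate exponents $\frac{2}{2-\lambda t}$ and $\frac{2}{\lambda t}$ is the correct one, and in either case a positive power of $\delta_n$ factors out against a quantity bounded uniformly in $n$. Your warning about the naive shortcut (controlling the residual series by a power less than one of $\sum_i\int_{B_i}|u_n|^{2^*_s}$) is well taken; interpolating against the \emph{local} $H^s$-seminorm is what makes the bookkeeping close.

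One point worth stating explicitly: your proof establishes the conclusion for $t \in (2, 2^*_s)$, whereas the lemma as printed claims $t \in [2, 2^*_s)$. The closed endpoint $t=2$ is actually false: take $u_n$ to be a sum of $n$ widely separated translates of a fixed $H^s$ profile, each rescaled to have $L^2$-mass $1/n$; then the vanishing hypothesis holds, $\{u_n\}$ is bounded in $H^s$, but $\|u_n\|_{L^2(\R^N)}=1$ for every $n$. Consistent with this, your argument degenerates at $t=2$ (there $\lambda=0$, you fall into the case $\lambda t<2$, and the exponent $t-2$ on $\delta_n$ vanishes). This is a typographical slip in the paper carried over from the usual way Lions' lemma is quoted; every place Lemma~\ref{Lem2.2} is actually invoked in the paper (Lemma~\ref{Lem3.6}, Lemma~\ref{lem4.4}) uses only exponents strictly inside $(2,2^*_s)$, so the version you prove is the one that is both true and needed.
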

\begin{lemma}\label{Lem2.3}
Let $t,r>1$ and $0<\mu<N$ such that $\frac{1}{r}+\frac{\mu}{N}+\frac{1}{t}=2.$ Let $f\in L^r(\R^N)$ and $h\in L^t(\R^N).$ Then there exists a sharp constant $C(r,N,\mu,t)>0,$ independent of $f \text{and } h,$ such that $$\int_{\R^N}\int_{\R^N}\frac{f(x)h(y)}{|x-y|^{\mu}}dxdy\leq C(r,N,\mu,t)\|f\|_{L^r(\R^N)}\|h\|_{L^t(\R^N)}.$$
\end{lemma}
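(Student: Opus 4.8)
The plan is to deduce this Hardy--Littlewood--Sobolev inequality from a pointwise estimate on the Riesz-type potential, using only Hölder's inequality and the Hardy--Littlewood maximal theorem. Write $I_\mu h(x):=\int_{\R^N}|x-y|^{-\mu}h(y)\,dy$. By Tonelli's theorem the left-hand side equals $\int_{\R^N}f(x)\,I_\mu(h)(x)\,dx$ (one may work throughout with $|f|,|h|$ in place of $f,h$, which only enlarges the integral), so by Hölder's inequality it suffices to prove the Sobolev--Riesz estimate $\|I_\mu h\|_{L^{r'}(\R^N)}\le C\|h\|_{L^t(\R^N)}$, where $r'=r/(r-1)$. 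A short computation shows that the balance condition $\frac1r+\frac\mu N+\frac1t=2$ is equivalent to $\frac1{r'}=\frac1t-\frac{N-\mu}{N}$; since $0<\mu<N$ and $r,t>1$, one checks that $\tfrac1{r'}\in(0,\tfrac1t)$, hence $t<r'<\infty$, and moreover $\mu t'>N$ where $t'=t/(t-1)$. These two facts are precisely what the estimate below needs.

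Next I would establish the Hedberg pointwise bound: for every $\delta>0$,
\[ I_\mu h(x)\le C\bigl(\delta^{N-\mu}\,Mh(x)+\delta^{\frac N{t'}-\mu}\,\|h\|_{L^t}\bigr), \]
where $Mh$ denotes the Hardy--Littlewood maximal function of $h$. For the near part $\int_{|x-y|<\delta}|x-y|^{-\mu}h(y)\,dy$ one decomposes the ball $B_\delta(x)$ into the dyadic annuli $\{2^{-j-1}\delta\le|x-y|<2^{-j}\delta\}$, bounds $|x-y|^{-\mu}$ by $(2^{-j-1}\delta)^{-\mu}$ there, and uses $\int_{B_{2^{-j}\delta}(x)}h\le C(2^{-j}\delta)^N Mh(x)$; summing the geometric series in $j$ (which converges since $N-\mu>0$) gives the first term. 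For the far part $\int_{|x-y|\ge\delta}|x-y|^{-\mu}h(y)\,dy$, Hölder's inequality gives the bound $\|h\|_{L^t}\bigl(\int_{|x-y|\ge\delta}|x-y|^{-\mu t'}dy\bigr)^{1/t'}$, and the last integral is finite precisely because $\mu t'>N$, equalling $C\delta^{N-\mu t'}$; this yields the second term. Optimizing in $\delta$ (choosing $\delta^{N/t}=\|h\|_{L^t}/Mh(x)$, which balances the two terms) produces
\[ I_\mu h(x)\le C\,\bigl(Mh(x)\bigr)^{t/r'}\,\|h\|_{L^t}^{\,1-t/r'}, \]
where the exponent $t/r'=1-\frac{(N-\mu)t}{N}\in(0,1)$ falls out automatically from the exponent identity of the first paragraph.

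Finally I would raise this inequality to the power $r'$ and integrate over $\R^N$: since $(1-t/r')r'=r'-t$ and the power of $Mh(x)$ becomes exactly $t$, we obtain $\|I_\mu h\|_{L^{r'}}^{r'}\le C\|h\|_{L^t}^{r'-t}\,\|Mh\|_{L^t}^{t}$. Because $t>1$, the Hardy--Littlewood maximal theorem gives $\|Mh\|_{L^t}\le C_t\|h\|_{L^t}$, whence $\|I_\mu h\|_{L^{r'}}\le C\|h\|_{L^t}$; combining with the reduction of the first paragraph closes the proof. The assertion that the constant is ``sharp'' then just means that the best (smallest) admissible constant $C(r,N,\mu,t)$ exists, which is immediate by taking the infimum over admissible constants once the inequality is known; its explicit value (Lieb's theorem, obtained via symmetric-decreasing rearrangement) is not needed here. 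The only non-elementary ingredient is the maximal-function inequality; an alternative, if one wants a fully self-contained argument, is to note that $|x|^{-\mu}\in L^{N/\mu,\infty}(\R^N)$ and invoke Young's inequality for weak-type kernels, itself a consequence of the Marcinkiewicz interpolation theorem applied to the sublinear operator $h\mapsto |x|^{-\mu}\ast h$. I expect the main obstacle to be purely bookkeeping: verifying the chain of exponent identities and keeping the near/far split and the $\delta$-optimization clean, since the analytic content is entirely standard.
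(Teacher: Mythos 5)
Your argument is correct, and it is worth noting that the paper itself offers no proof of this lemma at all: Lemma 2.3 is simply quoted as the classical Hardy--Littlewood--Sobolev inequality (the standard reference being Lieb--Loss), so there is no internal proof to compare against. What you supply is the standard Hedberg-type proof of the non-sharp version: the exponent bookkeeping ($\frac1{r'}=\frac1t-\frac{N-\mu}{N}\in(0,\frac1t)$, $\mu t'>N$, $t/r'=1-\frac{(N-\mu)t}{N}\in(0,1)$) is all verified correctly, the near/far split with the dyadic annuli and the maximal function, the H\"older estimate on the far part, the optimization in $\delta$, and the final application of the Hardy--Littlewood maximal theorem (legitimate since $t>1$) are all sound, and the duality reduction via Tonelli and H\"older at the start is the right way to pass from the bilinear form to the Riesz potential estimate. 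The one point to flag is the word ``sharp'': the genuinely sharp constant in HLS is Lieb's theorem and requires rearrangement arguments, which your proof does not (and need not) reproduce; your reading that the lemma only requires the existence of a finite best constant, obtained as the infimum of admissible constants once some constant works, is the correct interpretation for how the lemma is used in the paper (only the qualitative inequality is ever invoked). So your proof is complete for the statement as used, and is in fact more self-contained than the paper, which treats the lemma as a cited black box.
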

\begin{lemma}\label{Lem2.4}
The space $H_{\varepsilon}$ is continuously embedded into $H^s(\R^N).$ Therefore, $H_{\varepsilon}$ is continously embedded into $L^r(\R^N)$ for any $r\in[2,2^*_s]$ and compactly embedded into $L^r_{loc}(\R^N)$ for any $r\in[2,2^*_s).$
\end{lemma}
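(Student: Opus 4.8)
The plan is to reduce the statement to the global lower bound on the potential furnished by $(V_0)$ together with the Sobolev embeddings already recalled in Lemma \ref{Lem2.1}. First I would note that $(V_0)$ gives $V(\varepsilon x)\ge V_0>0$ for every $x\in\R^N$ and every $\varepsilon>0$, so that for any $u\in H_\varepsilon$,
\begin{equation*}
\int_{\R^N}u^2\,dx\le\frac{1}{V_0}\int_{\R^N}V(\varepsilon x)u^2\,dx\le\frac{1}{V_0}\|u\|_\varepsilon^2 .
\end{equation*}
Since also $[u]_{H^s(\R^N)}^2=\int_{\R^N}|(-\Delta)^{s/2}u|^2\,dx\le\|u\|_\varepsilon^2$, adding the two inequalities yields
\begin{equation*}
\|u\|_{H^s(\R^N)}^2=[u]_{H^s(\R^N)}^2+\|u\|_{L^2(\R^N)}^2\le\Big(1+\tfrac{1}{V_0}\Big)\|u\|_\varepsilon^2 ,
\end{equation*}
which is exactly the claimed continuous embedding $H_\varepsilon\hookrightarrow H^s(\R^N)$, with an embedding constant depending only on $V_0$ and, in particular, independent of $\varepsilon$.

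For the remaining assertions I would simply compose this embedding with those collected in Lemma \ref{Lem2.1}: because $H^s(\R^N)$ embeds continuously into $L^r(\R^N)$ for every $r\in[2,2^*_s]$ and compactly into $L^r_{loc}(\R^N)$ for every $r\in[2,2^*_s)$, the same holds for $H_\varepsilon$. The local compactness is spelled out in the standard way: if $u_n\rightharpoonup u$ in $H_\varepsilon$ then $(u_n)$ is bounded in $H_\varepsilon$, hence bounded in $H^s(\R^N)$ by the first part, and on any bounded domain the compact embedding $H^s(\R^N)\hookrightarrow L^r_{loc}(\R^N)$ forces $u_n\to u$ strongly in $L^r$ of that domain.

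There is no genuine obstacle here; the only point that must not be skipped is that the control of $\|u\|_{L^2(\R^N)}$ by $\|u\|_\varepsilon$ relies on the strict positivity of $V_0$ in $(V_0)$, and that the resulting constant $1+V_0^{-1}$ is uniform in $\varepsilon$, which is precisely what makes this estimate usable later when passing to the limit $\varepsilon\to0^+$.
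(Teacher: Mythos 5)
Your argument is correct and is precisely the standard one the paper tacitly relies on (the paper states Lemma~\ref{Lem2.4} without proof): the bound $\|u\|_{L^2}^2\le V_0^{-1}\|u\|_\varepsilon^2$ from $(V_0)$ together with the trivial control of the Gagliardo seminorm gives $H_\varepsilon\hookrightarrow H^s(\R^N)$ with a constant independent of $\varepsilon$, and the remaining embeddings then follow by composition with Lemma~\ref{Lem2.1}. Nothing is missing.
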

\begin{lemma}\cite{Palatucci}\label{Lem2.5}Let $u\in \mathcal{D}^{s,2}(\mathbb{R}^N)$, $\varphi\in C_0^\infty(\mathbb{R}^N)$ and for each $r>0,~\varphi_r(x)=\varphi(\frac{x}{r})$. Then
\begin{equation*}
u\varphi_r\rightarrow 0~\text{in}~\mathcal{D}^{s,2}(\mathbb{R}^N)~\text{as}~r\rightarrow0.
\end{equation*}
If, in addition, $\varphi\equiv 1$ in a neighbourhood of the origin, then
\begin{equation*}
u\varphi_r\rightarrow u~\text{in}~\mathcal{D}^{s,2}(\mathbb{R}^N)~\text{as}~r\rightarrow+\infty.
\end{equation*}
\end{lemma}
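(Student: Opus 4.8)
The plan is to estimate the Gagliardo seminorm $[u\varphi_r]_{H^s(\R^N)}^2$ directly. Using the pointwise identity
$$u(x)\varphi_r(x)-u(y)\varphi_r(y)=\varphi_r(x)\big(u(x)-u(y)\big)+u(y)\big(\varphi_r(x)-\varphi_r(y)\big)$$
and $(a+b)^2\le 2a^2+2b^2$, one obtains $[u\varphi_r]_{H^s}^2\le 2A_r+2B_r$, where
$$A_r:=\iint_{\R^N\times\R^N}\frac{\varphi_r(x)^2|u(x)-u(y)|^2}{|x-y|^{N+2s}}\,dx\,dy,$$
$$B_r:=\iint_{\R^N\times\R^N}\frac{u(y)^2|\varphi_r(x)-\varphi_r(y)|^2}{|x-y|^{N+2s}}\,dx\,dy.$$
For the first assertion I would let $r\to 0$ in these two quantities. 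For the second, I would observe that $\varphi_r-1$ has exactly the same difference quotient as $\varphi_r$, so it suffices to replace $\varphi_r^2$ by $(\varphi_r-1)^2$ in $A_r$ alone and let $r\to+\infty$ (and then $[u\varphi_r]_{H^s}^2<\infty$ together with $u\varphi_r\in L^{2^*_s}$ places $u\varphi_r$ in $\mathcal{D}^{s,2}$).

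The term $A_r$ is the soft part. By Tonelli, the function $h_u(x):=\int_{\R^N}|u(x)-u(y)|^2|x-y|^{-N-2s}\,dy$ lies in $L^1(\R^N)$ with $\int_{\R^N}h_u=[u]_{H^s}^2<\infty$. If $\mathrm{supp}\,\varphi\subset B_{R_0}$, then $\varphi_r^2\le\|\varphi\|_\infty^2\chi_{B_{rR_0}}$, hence $A_r\le\|\varphi\|_\infty^2\int_{B_{rR_0}}h_u\,dx\to 0$ as $r\to 0$ by absolute continuity of the Lebesgue integral. If instead $\varphi\equiv 1$ on $B_\rho$, then $(\varphi_r-1)^2$ is bounded and vanishes on $B_{r\rho}$, so the modified first term is $\le(1+\|\varphi\|_\infty)^2\int_{\{|x|>r\rho\}}h_u\,dx\to 0$ as $r\to+\infty$, since the tails of an $L^1$ function vanish.

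The term $B_r$ is where the critical exponent enters, via the embedding $\mathcal{D}^{s,2}(\R^N)\hookrightarrow L^{2^*_s}(\R^N)$ of Lemma~\ref{Lem2.1}. The change of variables $x=rz$, $y=rw$ turns the inner integral into $r^{-2s}g(w)$ with $g(w):=\int_{\R^N}|\varphi(z)-\varphi(w)|^2|z-w|^{-N-2s}\,dz$, so that
$$B_r=\int_{\R^N}v_r(w)^2\,g(w)\,dw,\qquad v_r(w):=r^{(N-2s)/2}u(rw),$$
and the rescaling is tuned so that $\|v_r\|_{L^{2^*_s}(\R^N)}=\|u\|_{L^{2^*_s}(\R^N)}$ for every $r>0$, while for each fixed radius $\ell>0$ one has $\|v_r\|_{L^{2^*_s}(B_\ell)}^{2^*_s}=\int_{B_{r\ell}}|u|^{2^*_s}\,dx$ and $\|v_r\|_{L^{2^*_s}(\R^N\setminus B_\ell)}^{2^*_s}=\int_{\{|x|>r\ell\}}|u|^{2^*_s}\,dx$. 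Since $\varphi$ is Lipschitz with compact support, splitting the $z$-integral near and far from $w$ shows $g\in L^1(\R^N)\cap L^\infty(\R^N)\subset L^{N/2s}(\R^N)$, and $N/2s$ is the Hölder conjugate of $2^*_s/2$. Splitting $\R^N=B_R\cup(\R^N\setminus B_R)$ and applying Hölder's inequality, $B_r\le\|g\|_{L^{N/2s}}\|v_r\|_{L^{2^*_s}(B_R)}^2+\|g\|_{L^{N/2s}(\R^N\setminus B_R)}\|u\|_{L^{2^*_s}(\R^N)}^2$; as $r\to 0$ the first term vanishes for every fixed $R$, while the second is $<\varepsilon$ once $R$ is large, so $B_r\to 0$. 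For $r\to+\infty$ I would split at a small radius $\delta$ instead: $B_r\le\|g\|_{L^{N/2s}(B_\delta)}\|u\|_{L^{2^*_s}(\R^N)}^2+\|g\|_{L^{N/2s}}\|v_r\|_{L^{2^*_s}(\R^N\setminus B_\delta)}^2$, where the first term is $<\varepsilon$ for $\delta$ small (uniformly in $r$, since $g\in L^{N/2s}$) and the second vanishes as $r\to+\infty$. Combining the estimates of $A_r$ and $B_r$ in each regime yields the two conclusions.

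I expect the analysis of $B_r$ to be the only genuine obstacle: the rescaled density $v_r(w)^2\,dw$ carries a fixed amount of $L^{2^*_s}$-mass but spreads out to infinity as $r\to 0$ and concentrates at the origin as $r\to+\infty$, so it cannot be controlled pointwise and must instead be localised against the fixed weight $g\in L^{N/2s}$, exploiting the absolute continuity and the vanishing tails of $\|u\|_{L^{2^*_s}(\cdot)}$. Everything else reduces to Fubini's theorem, a scaling identity, and the critical Sobolev inequality.
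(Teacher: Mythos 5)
The paper does not prove Lemma~\ref{Lem2.5} itself; it is simply quoted with the citation \cite{Palatucci}, so there is no in-house proof to compare against. Your argument is correct and self-contained, and its structure (the Leibniz-type splitting of the Gagliardo difference quotient, followed by a scaling reduction of the cross term) is the same one used in the cited reference and elsewhere in the literature on localisation in $\mathcal{D}^{s,2}$.

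All the individual estimates are sound. In $A_r$, reducing to $\int_{B_{rR_0}} h_u$ (resp.\ $\int_{\{|x|>r\rho\}}h_u$ after replacing $\varphi_r$ by $\varphi_r-1$) with $h_u\in L^1$ is exactly what absolute continuity and vanishing tails of an $L^1$ function give. In $B_r$, your change of variables $x=rz,\ y=rw$ correctly produces $B_r=\int v_r^2\,g$ with $v_r(w)=r^{(N-2s)/2}u(rw)$, the scaling being chosen so that $\|v_r\|_{L^{2^*_s}}$ is $r$-independent; $N/2s$ is indeed the H\"older conjugate of $2^*_s/2$, and the two-scale argument (pick $R$ large, let $r\to0$; pick $\delta$ small, let $r\to\infty$) is carried out correctly in each regime, since $g\in L^{N/2s}$ gives the absolute-continuity and tail controls you need for the $g$ factor and $\|v_r\|_{L^{2^*_s}(B_R)}^{2^*_s}=\int_{B_{rR}}|u|^{2^*_s}$, $\|v_r\|_{L^{2^*_s}(\R^N\setminus B_\delta)}^{2^*_s}=\int_{|x|>r\delta}|u|^{2^*_s}$ supply the ones for $v_r$. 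Two small simplifications worth noting: you get $g\in L^1$ for free from Tonelli, since $\int g=[\varphi]_{H^s}^2<\infty$ because $\varphi\in C_0^\infty\subset H^s$, so the near/far splitting is only needed to obtain $g\in L^\infty$; and for the second conclusion you need not argue separately that $u\varphi_r\in\mathcal{D}^{s,2}(\R^N)$, because once $[u\varphi_r-u]_{H^s}\to0$ the conclusion $u\varphi_r\to u$ in $\mathcal{D}^{s,2}$ follows directly since $u$ lies in that (complete) space.
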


\section{Ground state solution}
\begin{lemma}\label{Lem3.1}
$J_{\varepsilon}$ has a mountain pass geometry, that is
\begin{itemize}
\item[$(i)$] There exists $\alpha,\rho >0$ such that $J_{\varepsilon}(u)\geq\alpha$ for any $u\in H_{\varepsilon}$ which $\|u\|_{\varepsilon}=\rho.$
\item[$(ii)$] There exists $e\in H_{\varepsilon}$ with $\|e\|_{\varepsilon}>\rho$ such that $J_{\varepsilon}(e)<0.$
\end{itemize}
\end{lemma}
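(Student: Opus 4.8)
The plan is to verify the two mountain pass conditions directly from the structure of $J_\varepsilon$, exploiting the subcriticality estimates encoded in Lemmas \ref{Lem2.1}--\ref{Lem2.4} together with the growth hypotheses $(f_1)$--$(f_2)$ on $f$ (hence on $F$). First I would record the pointwise bounds on $F$: from $(f_1)$ and $(f_2)$, for any $\eta>0$ there is $C_\eta>0$ with $|f(t)|\le \eta |t| + C_\eta |t|^{q-1}$ and $|F(t)|\le \tfrac{\eta}{2}|t|^2 + \tfrac{C_\eta}{q}|t|^q$ for all $t\in\R$. Combining this with the Hardy--Littlewood--Sobolev inequality (Lemma \ref{Lem2.3}) applied with a suitable exponent $r\in(1,\tfrac{N}{\mu})$ chosen so that $rq\in[2,2^*_s]$ — which is exactly the range guaranteed by $(f_2)$ since $q\in(\tfrac{2N-\mu}{N},\tfrac{2N-\mu}{N-2s})$ — one gets
$$\int_{\R^N}\Big(\frac{1}{|x|^\mu}\ast F(u^+)\Big)F(u^+)\,dx \le C\|F(u^+)\|_{L^r}^2 \le C\big(\eta^2\|u\|_{2r}^4 + C_\eta^2\|u\|_{qr}^{2q}\big),$$
and by the continuous embeddings $H_\varepsilon\hookrightarrow L^p(\R^N)$ for $p\in[2,2^*_s]$ (Lemma \ref{Lem2.4}) this is bounded by $C\eta^2\|u\|_\varepsilon^4 + C\|u\|_\varepsilon^{2q}$.

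For part $(i)$: using the estimate above and $\|u^+\|_{2^*_s}^{2^*_s}\le C\|u\|_\varepsilon^{2^*_s}$, we obtain
$$J_\varepsilon(u)\ge \tfrac12\|u\|_\varepsilon^2 - C\eta^2\|u\|_\varepsilon^4 - C\|u\|_\varepsilon^{2q} - C\|u\|_\varepsilon^{2^*_s}.$$
Since $2q>2$ (as $q>\tfrac{2N-\mu}{N}>1$, and in fact $2q\ge \tfrac{2(2N-\mu)}{N}>2$) and $2^*_s>2$ and $4>2$, all the subtracted terms are of order strictly higher than $2$ in $\|u\|_\varepsilon$; hence there exist $\rho>0$ small and $\alpha>0$ with $J_\varepsilon(u)\ge\alpha$ whenever $\|u\|_\varepsilon=\rho$. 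I should double-check that the exponent $2r$ on the first term can be taken in $[2,2^*_s]$; choosing $\eta$ first and then $r$ appropriately handles the quadratic-in-$F$ term without trouble, since we only need the resulting power of $\|u\|_\varepsilon$ to exceed $2$.

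For part $(ii)$: fix any $u_0\in H_\varepsilon$ with $u_0\ge0$, $u_0\not\equiv0$ (e.g. a nonnegative $C_0^\infty$ bump), and consider $J_\varepsilon(tu_0)$ for $t>0$. The quadratic term grows like $\tfrac{t^2}{2}\|u_0\|_\varepsilon^2$, while the critical term contributes $-\tfrac{t^{2^*_s}}{2^*_s}\|u_0\|_{2^*_s}^{2^*_s}$ and, since $F\ge0$ is not identically zero near the support of $u_0$ (guaranteed by $(f_3)$--$(f_4)$, which force $f>0$ on $(0,\infty)$ and $F(t)\ge \tfrac{c}{\sigma}t^\sigma$), the convolution term is nonpositive and of order at least $t^{2\sigma}$ or at least $t^{2^*_s}$. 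Because $2^*_s>2$, we get $J_\varepsilon(tu_0)\to-\infty$ as $t\to+\infty$; taking $e=t_0u_0$ for $t_0$ large enough gives $\|e\|_\varepsilon>\rho$ and $J_\varepsilon(e)<0$. The main obstacle, such as it is, is purely bookkeeping: verifying that the exponents produced by the Hardy--Littlewood--Sobolev step together with $(f_2)$ genuinely land in the admissible Sobolev range $[2,2^*_s]$ so that all nonlinear terms are dominated by powers of $\|u\|_\varepsilon$ strictly above $2$ near the origin; once that is pinned down, both claims are immediate.
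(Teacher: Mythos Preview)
Your argument is correct. Part (i) matches the paper's proof almost verbatim: both apply the Hardy--Littlewood--Sobolev inequality with the exponent $r=t=\frac{2N}{2N-\mu}$ (this value is forced, not free, once you set $f=h=F(u^+)$ in Lemma~\ref{Lem2.3}), then use $(f_1)$--$(f_2)$ and the embeddings from Lemma~\ref{Lem2.4} to bound the convolution and critical terms by powers of $\|u\|_\varepsilon$ strictly larger than $2$. Your concern about $2r\in[2,2^*_s]$ is harmless: with $r=\frac{2N}{2N-\mu}$ one has $2r=\frac{4N}{2N-\mu}\le 2^*_s$ precisely because $\mu<2s<4s$.

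For part (ii) your route differs from the paper's. You simply drop the convolution term (legitimately, since $F\ge0$ makes it nonnegative) and use the critical term alone: $J_\varepsilon(tu_0)\le \tfrac{t^2}{2}\|u_0\|_\varepsilon^2 - \tfrac{t^{2^*_s}}{2^*_s}\|u_0^+\|_{2^*_s}^{2^*_s}\to-\infty$. The paper instead ignores the critical term and shows, via $(f_3)$ and the inequality $F(t)\le\tfrac12 f(t)t$, that $h(t):=\tfrac12\int(\tfrac{1}{|x|^\mu}\ast F(tu_0/\|u_0\|_\varepsilon))F(tu_0/\|u_0\|_\varepsilon)\,dx$ satisfies $h'(t)\ge \tfrac{4}{t}h(t)$, hence grows at least like $t^4$, yielding $J_\varepsilon(tu_0)\le C_1t^2-C_2t^4$. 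Your approach is shorter and uses less ($(f_3)$ and $(f_4)$ are not needed), but it relies essentially on the presence of the critical power; the paper's argument is more robust in that it would still give the mountain pass geometry even if the $|u|^{2^*_s-2}u$ term were absent.
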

\begin{proof}
In order to show this, we argue as in Lemma2.2 in \cite{Ambrosio2017Multiplicity}.  From $(f_1)$ and $(f_2),$ it follows that for any $\xi>0$ there exists $C_{\xi}>0$ such that
\begin{equation}\label{eq3.1}
f(t)\leq\xi|t|+C_{\xi}|t|^{q-1},\,\,\,\ F(t)\leq\xi|t|^2+C_{\xi}|t|^{q}.
\end{equation}
By \eqref{eq3.1} and Lemma \ref{Lem2.3}, we get
\begin{equation}\label{eq3.2}
\begin{split}
|\int_{\R^N}(\frac{1}{|x|^{\mu}}\ast F(u^+))F(u^+)dx|&\leq C\|F(u)\|_{L^t(\R^N)}\|F(u)\|_{L^t(\R^N)}\\
&\leq C(\int_{\R^N}(|u|^2+|u|^q)^tdx)^{\frac{2}{t}}
\end{split}
\end{equation}
where $t=\frac{2N}{2N-\mu}.$
Since $q\in(\frac{2N-\mu}{N},\frac{2N-\mu}{N-2s}),$ we can see that $tq\in(2,2^*_s),$ and from Lemma \ref{Lem2.4}, we have
\begin{equation}\label{eq3.3}
(\int_{\R^N}(|u|^2+|u|^q)^tdx)^{\frac{2}{t}}\leq C(\|u\|^2_{\varepsilon}+\|u\|^q_{\varepsilon})^2.
\end{equation}
Taking into account \eqref{eq3.2} and \eqref{eq3.3} we can deduce that
\begin{equation}\label{eq3.3.1}
\int_{\R^N}(\frac{1}{|x|^{\mu}}\ast F(u^+))F(u^+)dx+\frac{1}{2^*_s}\int_{\R^N}|u^+|^{2^*_s}dx\leq C(\|u\|^4_{\varepsilon}+\|u\|^{2q}_{\varepsilon}+\|u\|^{2^*_s}_{\varepsilon}).
\end{equation}
As a consequence
$$J_{\varepsilon}(u)\geq\frac{1}{2}\|u\|^2_{\varepsilon}-C(\|u\|^4_{\varepsilon}+\|u\|^{2q}_{\varepsilon}+\|u\|^{2^*_s}_{\varepsilon}).$$
We can see that $(i)$ holds.
\\
Fix a positive function $u_0\in H_{\varepsilon}(\R^N)\setminus\{0\}$ and $u_0>0,$ we set
$$h(t)=\frac{1}{2}\int_{\R^N}(\frac{1}{|x|^{\mu}}\ast F(\frac{tu_0}{\|u_0\|_{\varepsilon}}))F(\frac{tu_0}{\|u_0\|_{\varepsilon}})dx \,\,\ \text{for} \,\,\ t>0.$$
By $(f_3),$ we have $$F(u)=\int^1_0f(tu)udt=\int^1_0\frac{f(tu)}{tu}tu^2dt\leq\int^1_0f(u)tudx=\frac{1}{2}f(u)u \,\,\ \,\,\ \text{for} \,\,\ u>0.$$
Hence,
\begin{equation}\label{eq3.4}
\begin{split}
h'(t)&=\int_{\R^N}(\frac{1}{|x|^{\mu}}\ast F(\frac{tu_0}{\|u_0\|_{\varepsilon}}))f(\frac{tu_0}{\|u_0\|_{\varepsilon}})\frac{u_0}{\|u_0\|_{\varepsilon}}dx\\
&=\frac{4}{t}\int_{\R^N}\frac{1}{2}(\frac{1}{|x|^{\mu}}\ast F(\frac{tu_0}{\|u_0\|_{\varepsilon}}))\frac{1}{2}f(\frac{tu_0}{\|tu_0\|_{\varepsilon}})\frac{tu_0}{\|u_0\|_{\varepsilon}}dx\\
&\geq\frac{4}{t}h(t).
\end{split}
\end{equation}
Integrating \eqref{eq3.4} on $[1,t\|u_0\|_{\varepsilon}]$ with $t>\frac{1}{\|u_0\|_{\varepsilon}},$ we find
$$h(t\|u_0\|_{\varepsilon})\geq h(1)(t\|u_0\|_{\varepsilon})^4$$
which gives
$$\frac{1}{2}\int_{\R^N}(\frac{1}{|x|^{\mu}}\ast F(tu_0))F(tu_0)dx\geq\frac{1}{2}\int_{\R^N}(\frac{1}{|x|^{\mu}}\ast F(\frac{u_0}{\|u_0\|_{\varepsilon}}))F(\frac{u_0}{\|u_0\|_{\varepsilon}})dx\|u_0\|^4_{\varepsilon}t^4.$$
Therefore, we have
\begin{equation*}
\begin{split}
J_{\varepsilon}(tu_0)&=\frac{t^2}{2}\|u_0\|_{\varepsilon}^2-\frac{1}{2}\int_{\R^N}(\frac{1}{|x|^{\mu}}\ast F(tu_0))F(tu_0)dx-\frac{t^{2^*_s}}{2^*_s}\int_{\R^N}|u_0|^{2^*_s}dx\\
&\leq C_1t^2-C_2t^4
\end{split}
\end{equation*}
for $t>\frac{1}{\|u_0\|_{\varepsilon}}.$
Taking $e=tu_0$ with $t$ sufficiently large, we can see that $(ii)$ holds.
\end{proof}\\
Let us denote by $\mathcal{S}_\varepsilon$ the unitary sphere in $H_\varepsilon.$
\begin{lemma}\label{Lem3.2}
For each $u\in X^+_{\varepsilon}:=\{u\in H_{\varepsilon}:u^+(x)\neq 0\}$ and $t>0,$ set $h_u(t):=J_{\varepsilon}(tu).$
\begin{itemize}
\item[$(i)$]  Then there exists an unique $t_u>0$ such that $h_u(t_u)=\max\limits_{t\geq0}h_u(t)=\max\limits_{t\geq0}J_{\varepsilon}(tu),$ $h'_{u}(t_u)=0,$ $h'_{u}(t)>0$ in $(0,t_u),$ $h'_u(t)<0$ in $(t_u,+\infty)$ and $tu\in \mathcal{N}_{\varepsilon}$ if and only if $t=t_u,$ where $\mathcal{N}_{\varepsilon}=\{u\in X^+_{\varepsilon}:\langle J'_{\varepsilon}(u),u\rangle=0\}.$
\item[$(ii)$]  There is $\kappa>0$ independent on $u,$ such that $t_u\geq\kappa$ for all $u\in\mathcal{S}_\varepsilon.$ Moreover, for any compact set $E\subset\mathcal{S}_\varepsilon,$ there is a $C_{E}>0$ such that $t_u\leq C_{E}$ for all $u\in E.$
\end{itemize}
\end{lemma}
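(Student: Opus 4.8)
The plan is to analyze the real-valued function $h_u(t) = J_\varepsilon(tu)$ for $u \in X^+_\varepsilon$ fixed, and extract all assertions from its shape. First I would compute
$$h_u(t) = \frac{t^2}{2}\|u\|_\varepsilon^2 - \frac{1}{2}\int_{\R^N}\Big(\frac{1}{|x|^\mu}\ast F(tu^+)\Big)F(tu^+)\,dx - \frac{t^{2^*_s}}{2^*_s}\int_{\R^N}|u^+|^{2^*_s}\,dx,$$
and observe that by Lemma \ref{Lem3.1}(i) (applied with the fixed direction $u$) we have $h_u(t) \geq \alpha > 0$ for $t = \rho/\|u\|_\varepsilon$ small, hence $h_u(t) > 0$ for small $t>0$, while by the computation at the end of Lemma \ref{Lem3.1} (the estimate $J_\varepsilon(tu) \le C_1 t^2 - C_2 t^4$, valid since $u^+ \not\equiv 0$) we get $h_u(t) \to -\infty$ as $t \to +\infty$. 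Therefore $h_u$ attains a positive maximum at some $t_u > 0$ with $h_u'(t_u) = 0$, which is exactly the condition $\langle J'_\varepsilon(t_u u), t_u u\rangle = 0$, i.e. $t_u u \in \mathcal{N}_\varepsilon$.

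The crux is uniqueness of $t_u$ together with the sign information on $h_u'$, and this is where I expect the monotonicity hypothesis $(f_3)$ to do the work. Writing $h_u'(t) = 0$ as
$$t^2\|u\|_\varepsilon^2 = \int_{\R^N}\Big(\frac{1}{|x|^\mu}\ast F(tu^+)\Big)f(tu^+)\,tu^+\,dx + t^{2^*_s}\int_{\R^N}|u^+|^{2^*_s}\,dx,$$
I would divide by $t^2$ (for $t>0$) and show the right-hand side, as a function of $t$, is strictly increasing from $0$ to $+\infty$. The term $t^{2^*_s-2}\int |u^+|^{2^*_s}$ is manifestly strictly increasing since $2^*_s > 2$. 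For the nonlocal term, $(f_3)$ gives that $\tau \mapsto f(\tau)\tau$ is increasing and, more precisely, that $t \mapsto F(tu^+)$ and $t \mapsto f(tu^+)tu^+/t^2$ are monotone in the right way; combined with the positivity of the Riesz kernel, the double integral divided by $t^2$ is nondecreasing, and is in fact strictly increasing because of the critical term already present. Hence $g_u(t) := h_u'(t)/t = \|u\|_\varepsilon^2 - (\text{strictly increasing function vanishing at }0^+\text{, diverging at }\infty)$ has a unique zero $t_u$, is positive on $(0,t_u)$ and negative on $(t_u,\infty)$; multiplying by $t>0$ transfers this to $h_u'$, proving (i) entirely, including that $tu \in \mathcal{N}_\varepsilon \iff t = t_u$.

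For (ii), the lower bound $t_u \geq \kappa$ on $\mathcal{S}_\varepsilon$ follows from the Nehari identity: if $u \in \mathcal{S}_\varepsilon$ then $\|u\|_\varepsilon = 1$ and $t_u^2 = \int(\frac{1}{|x|^\mu}\ast F(t_uu^+))f(t_uu^+)t_uu^+ + t_u^{2^*_s}\int|u^+|^{2^*_s}$; applying \eqref{eq3.3.1} (with $u$ replaced by $t_uu$) bounds the right-hand side by $C(t_u^4 + t_u^{2q} + t_u^{2^*_s})$, so $1 \leq C(t_u^2 + t_u^{2q-2} + t_u^{2^*_s-2})$, and since all exponents are positive this forces $t_u$ bounded below by a constant $\kappa$ independent of $u$. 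For the upper bound on a compact set $E \subset \mathcal{S}_\varepsilon$, I would argue by contradiction: if $t_{u_n} \to \infty$ along $u_n \in E$, then up to a subsequence $u_n \to u$ in $H_\varepsilon$ with $\|u\|_\varepsilon = 1$ and $u^+ \not\equiv 0$, and using the superquadratic growth coming from $(f_3)$ (as in the end of Lemma \ref{Lem3.1}, where $\int(\frac{1}{|x|^\mu}\ast F(tu))F(tu) \geq C_2 t^4$) together with the critical term, we get $h_{u_n}(t_{u_n}) = J_\varepsilon(t_{u_n}u_n) \to -\infty$; but $h_{u_n}(t_{u_n}) = \max_{t\ge0} J_\varepsilon(tu_n) \geq J_\varepsilon(\rho u_n / \|u_n\|_\varepsilon) \geq \alpha > 0$ by Lemma \ref{Lem3.1}(i), a contradiction. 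The main obstacle is making the strict monotonicity of the nonlocal quotient fully rigorous from $(f_3)$ alone — one must be careful that $F(tu^+)$ need not be strictly monotone where $u^+ = 0$, so the strictness genuinely comes from the critical term, and the continuity/convergence arguments in (ii) must handle the convolution term via Lemma \ref{Lem2.3} and the compact embeddings of Lemma \ref{Lem2.4}.
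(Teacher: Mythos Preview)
Your proposal is correct and follows essentially the same approach as the paper: existence of $t_u$ via the mountain-pass geometry of Lemma~\ref{Lem3.1}, uniqueness by dividing the Nehari identity by $t$ (the paper writes $\|u\|_\varepsilon^2=\int(\tfrac{1}{|x|^\mu}\ast \tfrac{F(tu^+)}{t})f(tu^+)u^+ + t^{2^*_s-2}\int|u^+|^{2^*_s}$ and invokes the monotonicity of $f$ and $F(t)/t$ from $(f_3)$), and the lower bound in (ii) via \eqref{eq3.3.1}. For the upper bound on compact $E$ you reach the contradiction through $\max_t J_\varepsilon(tu_n)\geq J_\varepsilon(\rho u_n)\geq\alpha$, whereas the paper instead records the Nehari inequality $J_\varepsilon(v)\geq(\tfrac12-\tfrac1\alpha)\|v\|_\varepsilon^2$ for $v\in\mathcal N_\varepsilon$ and contrasts it with $J_\varepsilon(t_{u_n}u_n)\to-\infty$; both arguments are equivalent in spirit and your observation that the strict monotonicity ultimately rests on the critical term is a useful clarification the paper leaves implicit.
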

\begin{proof}
$(i)$ For every $u\in X^+_{\varepsilon},$ from Lemma \eqref{Lem3.1} we know that $h_u(0)=0,$ $h_u(t)>0$ for $t>0$ small enough and $\lim\limits_{t\rightarrow+\infty}h_u(t)=-\infty.$ Hence, there exists a $t_u>0$ such that $h_u(t_u)=\max\limits_{t\geq0}h_u(t)$ and $h'_u(t_u)=0.$ Notice that
$$h'_u(t)=0\Leftrightarrow tu\in\mathcal{N}_{\varepsilon}\Leftrightarrow \|u\|^2_{\varepsilon}=\int_{\R^N}(\frac{1}{|x|^{\mu}}\ast\frac{F(tu^+)}{t})f(tu^+)u^+dx+t^{2^*_s-2}\int_{\R^N}|u^+|^{2^*_s}dx.$$
From $(f_3)$ we know $t\mapsto f(t)$ and $t\mapsto\frac{F(t)}{t}$ are increasing for all $t>0.$ Hence, we get the uniqueness of a such $t_u$ and $(i)$ is completed.\\
$(ii)$ Let $u\in\mathcal{S}_\varepsilon.$ By $t_uu\in\mathcal{N}_\varepsilon$ and \eqref{eq3.3.1} we have
$$t_u^2=\int_{\R^N}(\frac{1}{|x|^{\mu}}\ast F(t_uu^+))f(t_uu^+)t_uu^+dx+\int_{\R^N}|t_uu^+|^{2^*_s}dx\leq C(t_u^4+t_u^{2q}+t_u^{2^*_s}).$$
So, there exists $\kappa>0$ independent of $u$, such that $t_u\geq\kappa.$ Let, $\alpha\in(2,2^*_s),$ $\alpha\leq4$ then $\frac{2}{\alpha}\geq\frac{1}{2}.$ We can infer that
$$F(t)\leq\frac{1}{2}f(t)t\leq\frac{2}{\alpha}f(t)t, \,\,\ \forall~t\geq0.$$
For any $v\in\mathcal{N}_\varepsilon,$ we have
\begin{equation}\label{eq3.3.2}
\begin{split}
J_\varepsilon(v)&= J_{\varepsilon}(v)-\frac{1}{\alpha}\langle J'_{\varepsilon}(v),v\rangle\\
&=(\frac{1}{2}-\frac{1}{\alpha})\|v\|_{\varepsilon}^2-\frac{1}{2}\int_{\R^N}(\frac{1}{|x|^{\mu}}\ast F(v^+))(F(v^+)-\frac{2}{\alpha}f(v^+)v^+)dx+(\frac{1}{\alpha}-\frac{1}{2^*_s})\int_{\R^N}|v^+|^{2^*_s}dx\\
&\geq(\frac{1}{2}-\frac{1}{\alpha})\|v\|_{\varepsilon}^2.
\end{split}
\end{equation}
If $E\subset\mathcal{S}_\varepsilon$ is a compact set and $u_n\subset E$ such that $t_{u_n}\rightarrow\infty,$ up to subset $u_n\rightarrow u$ in $H_\varepsilon$ and $J_\varepsilon(t_{u_n}u_n)\rightarrow-\infty.$ Taking $v_n=t_{u_n}u_n\in\mathcal{N}_\varepsilon$ in \eqref{eq3.3.2}, we can see that
$$0<\frac{1}{2}-\frac{1}{\alpha}\leq\frac{J_\varepsilon(t_{u_n}u_n)}{t_{u_n}^2}\leq0 \,\,\ as \,\,\ n\rightarrow\infty.$$
which gives a contradiction.
\end{proof}
\par
Define the mappings $\hat{n}_\varepsilon:H_\varepsilon\backslash\{0\}\rightarrow\mathcal{N}_\varepsilon$ and $n_\varepsilon:=\mathcal{S}_\varepsilon\rightarrow\mathcal{N}_\varepsilon$ by set
$$\hat{n}_\varepsilon(u):=t_uu \,\,\ \text{ and } \,\,\ n_\varepsilon:=\hat{n}_\varepsilon|_{\mathcal{S}_\varepsilon}.$$
We can apply \cite[Proposition8, Proposition9 and Corollary10 ]{Szulkin2010} to deduce the follow Lemma.
\begin{lemma}\label{lem3.3.1}
Suppose that $(V_0)$ and $(f_1)-(f_4),$ then
\begin{itemize}
\item[$(a)$]  The mapping $\hat{n}_\varepsilon$ is continuous and $n_\varepsilon$ is a homeomorphism between $\mathcal{S}_\varepsilon$ and $\mathcal{N}_\varepsilon.$ Moreover, $n_\varepsilon^{-1}(u)=\frac{u}{\|u\|_\varepsilon}.$
\item[$(b)$]  We define the maps $\hat{\psi}_\varepsilon:H_\varepsilon\backslash\{0\}\rightarrow\R$ by $\hat{\psi}_\varepsilon(u):=J_\varepsilon(\hat{n}_\varepsilon(u)).$ Then $\hat{\psi}_\varepsilon\in C^1(H_\varepsilon\backslash\{0\},\R)$ and
    $$\langle\hat{\psi}_\varepsilon'(u),v\rangle=\frac{\|\hat{n}_\varepsilon(u)\|_\varepsilon}{\|u\|_\varepsilon}\langle J'_\varepsilon(\hat{n}_\varepsilon(u),v)\rangle$$
    for every $u\in H_\varepsilon\backslash\{0\}$ and $v\in H_\varepsilon.$
\item[$(c)$]  We define the maps $\psi:\mathcal{S}_\varepsilon\rightarrow\R$ by $\psi_\varepsilon:=\hat{\psi}|_{\mathcal{S}_\varepsilon}.$ Then
    $\psi_\varepsilon\in C^1(\mathcal{S}_\varepsilon,\R)$ and $\langle\psi'_\varepsilon(u),v\rangle=\|n_\varepsilon(u)\|_\varepsilon\langle J_\varepsilon'(n_\varepsilon(u)),v\rangle$ for any $v\in T_u\mathcal{S}_\varepsilon.$
\item[$(d)$]  If $\{u_n\}$ is a $(PS)_d$ sequence for $\psi_\varepsilon,$ then $\{n_\varepsilon(u_n)\}$ is a $(PS)_d$ sequence for $J_\varepsilon.$ Moreover, if $\{u_n\}\subset\mathcal{N}_\varepsilon$ is a bounded $(PS)_d$ sequence for $\psi_\varepsilon,$ then $\{n_\varepsilon^{-1}(u_n)\}$ is a $(PS)_d$ sequence for the functional $\psi_\varepsilon.$
\item[$(e)$]  $u$ is a critical point of $\psi_\varepsilon$ if and only if $n_\varepsilon(u)$ is a nontrivial critical point for $J_\varepsilon.$ Moreover, the corresponding critical values coincide and
    $$\inf_{u\in\mathcal{S}_\varepsilon}\psi_\varepsilon(u)=\inf_{u\in\mathcal{N}_\varepsilon}J_\varepsilon(u).$$
\end{itemize}
\end{lemma}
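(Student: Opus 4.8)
The plan is to obtain (a)--(e) as a direct application of the abstract Nehari‑manifold framework of Szulkin and Weth; concretely, it suffices to verify that the hypotheses underlying \cite[Proposition~8, Proposition~9 and Corollary~10]{Szulkin2010} hold for $J_\varepsilon$ on the open cone $X^+_\varepsilon$. Writing $\mathcal{S}_\varepsilon^+:=\mathcal{S}_\varepsilon\cap X^+_\varepsilon$ (this open subset of $\mathcal{S}_\varepsilon$ is the set actually meant in the statement, since $J_\varepsilon$ involves the truncation $u^+$), those hypotheses reduce to: $J_\varepsilon\in C^1(H_\varepsilon,\R)$ (recorded in Section~2); for each $u\in X^+_\varepsilon$ the fibering map $h_u(t)=J_\varepsilon(tu)$ has a unique positive critical point $t_u$, which is its strict global maximum, with $h_u'>0$ on $(0,t_u)$, $h_u'<0$ on $(t_u,\infty)$, and $tu\in\mathcal{N}_\varepsilon\iff t=t_u$; there is $\kappa>0$ with $t_u\ge\kappa$ for all $u\in\mathcal{S}_\varepsilon^+$ and $t_u$ bounded on compact subsets of $\mathcal{S}_\varepsilon^+$; and $J_\varepsilon$ is bounded below on $\mathcal{N}_\varepsilon$ by a positive constant. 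The first three items are precisely Lemma~\ref{Lem3.1} and Lemma~\ref{Lem3.2}, while the last follows from \eqref{eq3.3.2}, which gives $J_\varepsilon(v)\ge(\frac{1}{2}-\frac{1}{\alpha})\|v\|_\varepsilon^2$ for $v\in\mathcal{N}_\varepsilon$, together with $\|v\|_\varepsilon=t_{v/\|v\|_\varepsilon}\ge\kappa$ (obtained by applying the uniqueness in Lemma~\ref{Lem3.2}(i) to the ray through $v/\|v\|_\varepsilon$).

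For (a), I would first show that $\hat n_\varepsilon$ is continuous: if $u_n\to u$ in $X^+_\varepsilon$, then by Lemma~\ref{Lem3.2}(ii) the sequence $(t_{u_n})$ stays in a compact subinterval of $(0,\infty)$, so along a subsequence $t_{u_n}\to t_0>0$; passing to the limit in $\langle J_\varepsilon'(t_{u_n}u_n),t_{u_n}u_n\rangle=0$ using the continuity of $J_\varepsilon'$ yields $t_0u\in\mathcal{N}_\varepsilon$, whence $t_0=t_u$ by uniqueness; as the limit does not depend on the subsequence, $t_{u_n}\to t_u$ and hence $\hat n_\varepsilon(u_n)\to\hat n_\varepsilon(u)$. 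Then $n_\varepsilon=\hat n_\varepsilon|_{\mathcal{S}_\varepsilon^+}$ is a continuous bijection onto $\mathcal{N}_\varepsilon$ whose inverse is the plainly continuous map $u\mapsto u/\|u\|_\varepsilon$ (which sends $\mathcal{N}_\varepsilon$ into $\mathcal{S}_\varepsilon^+$), so $n_\varepsilon$ is a homeomorphism; this is \cite[Proposition~8]{Szulkin2010}.

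For (b) and (c) I would appeal to \cite[Proposition~9]{Szulkin2010}: the point is that although $\mathcal{N}_\varepsilon$ need not be a $C^1$‑manifold (since $f$ is merely continuous), the composite $\hat\psi_\varepsilon=J_\varepsilon\circ\hat n_\varepsilon$ is still of class $C^1$, and when one differentiates along a direction $v$ the contribution of the derivative of $u\mapsto t_u$ is annihilated by the Nehari identity $\langle J_\varepsilon'(\hat n_\varepsilon(u)),\hat n_\varepsilon(u)\rangle=0$, leaving exactly $\langle\hat\psi_\varepsilon'(u),v\rangle=\frac{\|\hat n_\varepsilon(u)\|_\varepsilon}{\|u\|_\varepsilon}\langle J_\varepsilon'(\hat n_\varepsilon(u)),v\rangle$; restricting to $\mathcal{S}_\varepsilon^+$, where $\|\hat n_\varepsilon(u)\|_\varepsilon=t_u$ and $\|u\|_\varepsilon=1$, gives (c).

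Finally, (d) and (e) are \cite[Corollary~10]{Szulkin2010}. From the derivative formula and the fact that along any $(PS)$‑sequence $\|n_\varepsilon(u)\|_\varepsilon=t_u$ is bounded above and below by positive constants (boundedness of $\{n_\varepsilon(u_n)\}$ in $H_\varepsilon$ again coming from \eqref{eq3.3.2}), one gets $\|\psi_\varepsilon'(u_n)\|\to0\iff\|J_\varepsilon'(n_\varepsilon(u_n))\|\to0$, so the $(PS)_d$ condition transfers between $\psi_\varepsilon$ and $J_\varepsilon$ in both directions; moreover $\psi_\varepsilon'(w)=0$ forces $\langle J_\varepsilon'(n_\varepsilon(w)),v\rangle=0$ for all $v\in T_w\mathcal{S}_\varepsilon$, and combined with $\langle J_\varepsilon'(n_\varepsilon(w)),n_\varepsilon(w)\rangle=0$ and the splitting $H_\varepsilon=T_w\mathcal{S}_\varepsilon\oplus\R w$ this yields $J_\varepsilon'(n_\varepsilon(w))=0$, while the coincidence of critical values and of the two infima is immediate from $\psi_\varepsilon=J_\varepsilon\circ n_\varepsilon$ and the bijectivity of $n_\varepsilon$. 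The only genuine subtlety — and thus the point to treat carefully — is that the truncation $u^+$ forces all of this to be done on the open subset $\mathcal{S}_\varepsilon^+$ of $\mathcal{S}_\varepsilon$ rather than on all of $\mathcal{S}_\varepsilon$; since $\mathcal{S}_\varepsilon^+$ is open in the $C^1$‑manifold $\mathcal{S}_\varepsilon$ every step above and in \cite{Szulkin2010} goes through verbatim, but one must check that $\hat n_\varepsilon$ maps into $\mathcal{N}_\varepsilon$ and that $(PS)$‑sequences remain inside $X^+_\varepsilon$, both of which follow from the uniform lower bound $t_u\ge\kappa>0$.
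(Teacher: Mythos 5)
Your proposal is correct and follows the same route as the paper, which simply invokes \cite[Propositions 8, 9 and Corollary 10]{Szulkin2010} without spelling out the verification; you supply exactly the checks (Lemma~\ref{Lem3.1}, Lemma~\ref{Lem3.2}, and the coercivity estimate~\eqref{eq3.3.2}) that make that citation legitimate. Your observation that the maps and the sphere must be restricted to the open cone $X_\varepsilon^+$ (so that $\mathcal{S}_\varepsilon$ should really be read as $\mathcal{S}_\varepsilon\cap X_\varepsilon^+$) is a genuine point of care that the paper glosses over but that the Szulkin--Weth framework does require.
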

\begin{remark}\label{rem3.2}
As in \cite{Szulkin2010}, we have the following minimax characterization:
$$c_\varepsilon=\inf_{u\in\mathcal{N}_\varepsilon}J_\varepsilon(u)=\inf_{u\in H_\varepsilon\backslash\{0\}}\max_{t>0}J_\varepsilon(tu)=\inf_{u\in\mathcal{S}_\varepsilon}\max_{t>0}J_\varepsilon(tu).$$
\end{remark}
\par
Recall that $\{u_n\}\subset H_{\varepsilon}$ is called a $(C)_d$ sequence of $J_{\varepsilon}$ if $J_{\varepsilon}(u_n)\rightarrow d$ and $ (1+\|u_n\|_{\varepsilon})J'_{\varepsilon}(u_n)\rightarrow0,$ where $d>0$ $J_{\varepsilon}$ satisfies the $(C)_d$ condition if every $(C)_d$ sequence of $J_{\varepsilon}$ has a convergent subsequence. And $J_{\varepsilon}$ satisfies the $(PS)_d$ condition if $J_{\varepsilon}$ satisfies the $(C)_d$ condition. Next, we give some properties of $(PS)_d$ sequence of $J_{\varepsilon}.$
\begin{lemma}\label{Lem3.3}
Let $\{u_n\}\subset H_{\varepsilon}$ is a $(PS)_d$ sequence of $J_{\varepsilon},$ then $\{u_n\}$ is bounded in $H^s(\R^N)$ and $\{\|u_n^-\|_{\varepsilon}\}=o_n(1).$
\end{lemma}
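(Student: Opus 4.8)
The plan is to derive both statements from the monotonicity hypothesis $(f_3)$, which — exactly as in the proof of Lemma~\ref{Lem3.1} — gives $F(t)\le\frac12 f(t)t$, equivalently $f(t)t\ge 2F(t)$, for all $t>0$, together with standard testing arguments against $u_n$ and against $u_n^-$.

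For boundedness I would fix $\theta\in(2,\min\{4,2^*_s\})$ (a nonempty open interval since $2^*_s>2$) and examine $J_\varepsilon(u_n)-\frac1\theta\langle J'_\varepsilon(u_n),u_n\rangle$. The quadratic part yields $(\frac12-\frac1\theta)\|u_n\|_\varepsilon^2$; the critical term yields $(\frac1\theta-\frac1{2^*_s})\int_{\R^N}|u_n^+|^{2^*_s}dx\ge0$ because $\theta<2^*_s$; and the Choquard term equals $\int_{\R^N}(\frac1{|x|^\mu}\ast F(u_n^+))(\frac1\theta f(u_n^+)u_n^+-\frac12 F(u_n^+))dx$, which is $\ge(\frac2\theta-\frac12)\int_{\R^N}(\frac1{|x|^\mu}\ast F(u_n^+))F(u_n^+)dx\ge0$, using $f(t)t\ge2F(t)$, $F\ge0$, the nonnegativity of the Riesz potential of $F(u_n^+)$, and $\theta\le4$. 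Hence
$$\Big(\frac12-\frac1\theta\Big)\|u_n\|_\varepsilon^2\le J_\varepsilon(u_n)-\frac1\theta\langle J'_\varepsilon(u_n),u_n\rangle\le d+o_n(1)+o_n(1)\|u_n\|_\varepsilon,$$
where I used $J_\varepsilon(u_n)\to d$ and $|\langle J'_\varepsilon(u_n),u_n\rangle|\le\|J'_\varepsilon(u_n)\|_{H_\varepsilon^*}\|u_n\|_\varepsilon=o_n(1)\|u_n\|_\varepsilon$. Since $\frac12-\frac1\theta>0$, this bounds $\{\|u_n\|_\varepsilon\}$, and boundedness in $H^s(\R^N)$ then follows from Lemma~\ref{Lem2.4}, using $(V_0)$ in the form $\min\{1,V_0\}\|u\|_{H^s(\R^N)}^2\le\|u\|_\varepsilon^2$.

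For the estimate on the negative part I would test $J'_\varepsilon(u_n)$ against $u_n^-$, which is bounded in $H_\varepsilon$ by the previous step, so $\langle J'_\varepsilon(u_n),u_n^-\rangle=o_n(1)$. Since $f$ vanishes on $(-\infty,0]$, the factors $f(u_n^+)$ and $|u_n^+|^{2^*_s-2}u_n$ are supported in $\{u_n>0\}$, where $u_n^-\equiv0$; thus both nonlinear terms drop out and $\langle J'_\varepsilon(u_n),u_n^-\rangle=\iint_{\R^N\times\R^N}\frac{(u_n(x)-u_n(y))(u_n^-(x)-u_n^-(y))}{|x-y|^{N+2s}}dxdy+\int_{\R^N}V(\varepsilon x)(u_n^-)^2dx$. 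The elementary pointwise inequality $(a-b)(a^--b^-)\ge(a^--b^-)^2$ for all $a,b\in\R$ (which follows from $(a^+-b^+)(a^--b^-)=-a^+b^--b^+a^-\ge0$) shows the right-hand side is $\ge\|u_n^-\|_\varepsilon^2$, whence $\|u_n^-\|_\varepsilon^2\le o_n(1)$, that is, $\|u_n^-\|_\varepsilon=o_n(1)$. I do not expect a genuine obstacle here: the only points requiring care are the bookkeeping that makes the Choquard and critical terms enter with favourable signs — which is exactly why $\theta$ must lie in $(2,\min\{4,2^*_s\})$, the bound $\theta\le4$ being what the quadratic scaling of the convolution nonlinearity allows under $(f_3)$ — and the fractional pointwise inequality bounding the Gagliardo seminorm of $u_n^-$ by the cross term with $u_n$, which replaces the trivial identity available in the local case $s=1$.
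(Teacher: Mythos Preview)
Your proof is correct and follows essentially the same approach as the paper: both argue boundedness via $J_\varepsilon(u_n)-\frac{1}{\theta}\langle J'_\varepsilon(u_n),u_n\rangle$ with $\theta\in(2,2^*_s)$, $\theta\le 4$, using $F(t)\le\frac12 f(t)t$ from $(f_3)$ to make the Choquard term nonnegative, and both handle $u_n^-$ by testing against it and invoking the pointwise inequality $(a-b)(a^--b^-)\ge(a^--b^-)^2$. The only cosmetic difference is that the paper packages the Choquard estimate as $F(u_n^+)-\frac{2}{\alpha}f(u_n^+)u_n^+\le 0$ directly, whereas you pass through the intermediate lower bound $(\frac{2}{\theta}-\frac{1}{2})\int(\frac{1}{|x|^\mu}\ast F(u_n^+))F(u_n^+)dx$.
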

\begin{proof}
Let, $\alpha\in(2,2^*_s),$ $\alpha\leq4$ then $\frac{2}{\alpha}\geq\frac{1}{2}.$ We can infer that
$$F(t)\leq\frac{1}{2}f(t)t\leq\frac{2}{\alpha}f(t)t, \,\,\ \forall t\geq0.$$
Since $\{u_n\}$ is a $(PS)_d$ sequence of $J_{\varepsilon},$ we have
\begin{equation*}
\begin{split}
d+1+\|u_n\|_{\varepsilon}&\geq J_{\varepsilon}(u_n)-\frac{1}{\alpha}\langle J'_{\varepsilon}(u_n),u_n\rangle\\
&=(\frac{1}{2}-\frac{1}{\alpha})\|u_n\|_{\varepsilon}^2-\frac{1}{2}\int_{\R^N}(\frac{1}{|x|^{\mu}}\ast F(u_n^+))(F(u_n^+)-\frac{2}{\alpha}f(u_n^+)u_n^+)dx+(\frac{1}{\alpha}-\frac{1}{2^*_s})\int_{\R^N}|u_n^+|^{2^*_s}dx\\
&\geq(\frac{1}{2}-\frac{1}{\alpha})\|u_n\|_{\varepsilon}^2.
\end{split}
\end{equation*}
Therefore, we get that the sequence $\{u_n\}$ is bounded in $H_{\varepsilon}.$
Next, we prove that ${\|u_n^-\|}=o_n(1).$ Since $\langle J'_{\varepsilon}(u_n),u_n^-\rangle=o_n(1),$ by using $f(t)=0$ for $t\leq0$ and $(x-y)(x^--y^-)\geq|x^--y^-|^2$ where $x^-=\min\{x,0\},$ we can deduce that
\begin{equation*}
\begin{split}
\|u_n^-\|^2_{\varepsilon}&\leq\int_{\R^{N}}\int_{\R^{N}}\frac{(u_n(x)-u_n(y))(u_n^-(x)-u_n^-(y))}{|x-y|^{N+2s}}dxdy+\int_{\R^N}V(\varepsilon x)u_nu_n^-dx\\
&=\int_{\R^N}(\frac{1}{|x|^{\mu}}\ast F(u_n^+))f(u_n^+)u_n^-dx+\int_{\R^N}|u_n^+|^{2^*_s-2}u_n^+u_n^-dx+o_n(1)\\
&=o_n(1).
\end{split}
\end{equation*}
Therefore, we complete our proof.
\end{proof}
\begin{lemma}\label{Lem3.4}
There exists a constant $r>0$ such that $\|u\|_{\varepsilon}\geq r$ for all $\varepsilon\geq0$ and $u\in\mathcal{N}_{\varepsilon}$
\end{lemma}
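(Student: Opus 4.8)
The plan is to use the Nehari identity directly together with the growth estimates already established. Let $u\in\mathcal{N}_\varepsilon$; then $u^+\neq 0$ and $\langle J'_\varepsilon(u),u\rangle=0$, that is,
$$\|u\|^2_\varepsilon=\int_{\R^N}\Big(\frac{1}{|x|^\mu}\ast F(u^+)\Big)f(u^+)u^+\,dx+\int_{\R^N}|u^+|^{2^*_s}\,dx.$$
I would bound the right-hand side by a sum of powers of $\|u\|_\varepsilon$ all strictly larger than $2$, with constants that do not depend on $\varepsilon$, and then extract the lower bound.

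For the nonlocal term I would argue exactly as in the proof of Lemma \ref{Lem3.1}: by $(f_1)$ and $(f_2)$ (see \eqref{eq3.1}) one has $f(\tau)\tau\le\xi\tau^2+C_\xi\tau^q$ and $F(\tau)\le\xi\tau^2+C_\xi\tau^q$ for $\tau\ge0$, hence Lemma \ref{Lem2.3} with $t=\frac{2N}{2N-\mu}$ gives
$$\int_{\R^N}\Big(\frac{1}{|x|^\mu}\ast F(u^+)\Big)f(u^+)u^+\,dx\le C\,\|F(u^+)\|_{L^t(\R^N)}\|f(u^+)u^+\|_{L^t(\R^N)}\le C\big(\|u\|^2_\varepsilon+\|u\|^q_\varepsilon\big)^2,$$
where I use that $2t,qt\in(2,2^*_s)$ — which is exactly where the restrictions $0<\mu<\min\{2s,N-2s\}$ and $q\in(\frac{2N-\mu}{N},\frac{2N-\mu}{N-2s})$ enter — together with the continuous embeddings of Lemma \ref{Lem2.4}; the critical term is controlled by Lemma \ref{Lem2.1}. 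This produces
$$\|u\|^2_\varepsilon\le C\big(\|u\|^4_\varepsilon+\|u\|^{2+q}_\varepsilon+\|u\|^{2q}_\varepsilon+\|u\|^{2^*_s}_\varepsilon\big).$$
The one delicate point is that $C$ must be independent of $\varepsilon$; this follows from $(V_0)$, since $V(\varepsilon x)\ge V_0>0$ yields $\|w\|^2_{H^s(\R^N)}\le\min\{1,V_0\}^{-1}\|w\|^2_\varepsilon$ for every $\varepsilon\ge0$, so all the Sobolev and Hardy--Littlewood--Sobolev constants involved are uniform in $\varepsilon$ (and the case $\varepsilon=0$, where $V(\varepsilon x)\equiv V(0)\ge V_0$ and $H_0=H^s(\R^N)$, is included as is).

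To conclude, since $u\in X^+_\varepsilon$ forces $\|u\|_\varepsilon>0$, I divide the displayed inequality by $\|u\|^2_\varepsilon$ and use that each of $4$, $2+q$, $2q$, $2^*_s$ exceeds $2$ (recall $q>\frac{2N-\mu}{N}>1$, so $2q-2>0$, and $2^*_s-2>0$) to obtain
$$1\le C\big(\|u\|^2_\varepsilon+\|u\|^q_\varepsilon+\|u\|^{2q-2}_\varepsilon+\|u\|^{2^*_s-2}_\varepsilon\big).$$
Were $\|u\|_\varepsilon$ allowed to be arbitrarily small, the right-hand side would tend to $0$, a contradiction; hence there exists $r>0$, depending only on $C$ and the four exponents and therefore neither on $\varepsilon$ nor on $u$, such that $\|u\|_\varepsilon\ge r$. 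I expect the only real obstacle to be the bookkeeping that makes all constants $\varepsilon$-uniform; the inequality chain itself is the same one used to verify the mountain-pass geometry.
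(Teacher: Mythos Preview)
Your proof is correct and follows essentially the same route as the paper: use the Nehari identity, bound the right-hand side via Hardy--Littlewood--Sobolev and the Sobolev embeddings to get $\|u\|^2_\varepsilon\le C(\|u\|^4_\varepsilon+\|u\|^{2q}_\varepsilon+\|u\|^{2^*_s}_\varepsilon)$, and conclude. The paper's version is terser --- it silently absorbs the cross term $\|u\|^{2+q}_\varepsilon$ and does not spell out why $C$ is $\varepsilon$-uniform --- so your extra bookkeeping on these two points is a welcome clarification rather than a different argument.
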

\begin{proof}
By using Lemma \ref{Lem2.3} and $(f_1)-(f_2),$ we can see that for any $u\in\mathcal{N}_{\varepsilon}$
$$\|u\|^2_{\varepsilon}\leq C(\|u\|^4_{\varepsilon}+\|u\|^{2q}_{\varepsilon}+\|u\|^{2^*_s}_{\varepsilon})$$
then, there exists $r>0$ such that
\begin{equation}\label{eq3.5}
\|u\|_{\varepsilon}\geq r \ \ \text{for all} \ \ \ u\in\mathcal{N}_{\varepsilon}, \,\,\ \varepsilon\geq0
\end{equation}
Hence, we deduce to the lemma holds.
\end{proof}
\par
When $V\equiv1,$ then $H^s(\R^N)=H_{\varepsilon}(\R^N).$ For $\tau>0$ and $u\in H^s(\R^N),$ let
$$I_{\tau}(u)=\frac{1}{2}\int_{\R^N}\int_{\R^N}\frac{|u(x)-u(y)|^2}{|x-y|^{N+2s}}dxdy+\frac{\tau}{2}\int_{\R^N}u^2dx-\frac{1}{2^*_s}\int_{\R^N}|u^+|^{2^*_s}dx-\frac{1}{2}\int_{\R^N}(\frac{1}{|x|^{\mu}}\ast F(u^+))F(u^+)dx$$
$$\M_{\tau}:=\{u\in H^s(\R^N):u^+\neq0,\langle I'_{\tau}(u),u\rangle=0\} ,\,\,\ m_{\tau}:=\inf\limits_{\M_{\tau}}I_{\tau}.$$
\par
For $m_{\tau},$ there also holds
$$m_{\tau}=\inf\limits_{\gamma\in\Gamma}\sup\limits_{t\in[0,1]}I_{\tau}(\gamma(t))=\inf\limits_{u\in H^s(\R^N)}\sup\limits_{t\geq0}I_{\tau}(tu)$$
where $\Gamma=\{\gamma\in C([0,1],H^s(\R^N)):\gamma(0)=0, \,\,\ I_{\tau}(\gamma(1))<0\}.$
\begin{lemma}\label{Lem3.5}
For any $\tau>0,$ there exists $u\in H^s(\R^N)$ with $u^+\neq0$ such that $$\max\limits_{t\geq0}I_{\tau}(tu)<\frac{s}{N}S^{\frac{N}{2s}},$$
where $S:=\inf\limits_{u\in\mathcal{D}^{s,2}(\R^N)}\frac{\|u\|^2_{\mathcal{D}^{s,2}(\R^N)}}{\|u\|_{L^{2^*_s}(\R^N)}^2}.$
\end{lemma}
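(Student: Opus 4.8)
The plan is to test the limit functional $I_\tau$ with a truncated Aubin--Talenti extremal of the fractional Sobolev inequality, and to exploit the fact that the (positive) convolution term enters $I_\tau$ with a minus sign: if this term is large enough it pushes the mountain pass level of $I_\tau$ strictly below the critical threshold $\frac{s}{N}S^{\frac{N}{2s}}$, and the exponent $q_{_{N}}$ in $(f_4)$ is exactly calibrated to make this happen. Concretely, I would fix $\varphi\in C_0^\infty(\R^N)$ with $0\le\varphi\le1$ and $\varphi\equiv1$ near the origin, and for $\delta>0$ set $u_\delta:=\varphi\,U_\delta$, where $U_\delta(x)=\delta^{-\frac{N-2s}{2}}U(x/\delta)$ and $U$ is the extremal in \eqref{eq2.1}, normalized so that $[U]^2_{H^s(\R^N)}=\|U\|^{2^*_s}_{L^{2^*_s}(\R^N)}=S^{N/2s}$ and $c(1+|x|)^{-(N-2s)}\le U(x)\le C(1+|x|)^{-(N-2s)}$. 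By Lemma \ref{Lem2.5}, $u_\delta\in\mathcal{D}^{s,2}(\R^N)$, and since it has compact support, $u_\delta\in H^s(\R^N)$, with $u_\delta\ge0$ and $u_\delta^+=u_\delta\ne0$. I would then invoke the by now classical asymptotics of the truncated fractional bubble: as $\delta\to0^+$,
$$[u_\delta]^2_{H^s(\R^N)}\le S^{N/2s}+O(\delta^{N-2s}),\qquad \|u_\delta\|^{2^*_s}_{L^{2^*_s}(\R^N)}\ge S^{N/2s}-O(\delta^{N}),\qquad \|u_\delta\|^2_{L^2(\R^N)}=O(\delta^{\theta}),$$
where $\theta:=\min\{2s,N-2s\}$ (a factor $|\log\delta|$ appears in the borderline case $N=4s$ only, which will not affect the conclusion).

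The main work is a lower bound for the convolution term. By $(f_4)$, $F(t)\ge\frac{c}{\sigma}t^\sigma$ for $t\ge0$, so it suffices to bound $\iint_{\R^N\times\R^N}\frac{u_\delta^\sigma(x)u_\delta^\sigma(y)}{|x-y|^\mu}\,dx\,dy$ from below. Since $\sigma>q_{_{N}}$ one checks that $q_{_{N}}(N-2s)>\tfrac{2N-\mu}{2}$, hence $U^\sigma\in L^{\frac{2N}{2N-\mu}}(\R^N)$, and by Lemma \ref{Lem2.3} the limiting integral $\iint\frac{U^\sigma(z)U^\sigma(w)}{|z-w|^\mu}\,dz\,dw=:L$ is finite and positive; restricting to the region where $\varphi\equiv1$ and rescaling $x=\delta z$, $y=\delta w$, one gets $\iint\frac{u_\delta^\sigma u_\delta^\sigma}{|x-y|^\mu}\ge\tfrac{L}{2}\,\delta^{\,2N-\mu-\sigma(N-2s)}$ for $\delta$ small, with positive exponent because $\sigma<\tfrac{2N-\mu}{N-2s}$. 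Next, arguing exactly as in Lemma \ref{Lem3.1}, $I_\tau$ has the mountain pass geometry, so $h_\delta(t):=I_\tau(tu_\delta)$ attains its maximum over $t\ge0$ at some $t_\delta>0$; from $0<\alpha_\tau\le h_\delta(t_\delta)\le\frac{t_\delta^2}{2}\big([u_\delta]^2_{H^s(\R^N)}+\tau\|u_\delta\|^2_{L^2(\R^N)}\big)-\frac{t_\delta^{2^*_s}}{2^*_s}\|u_\delta\|^{2^*_s}_{L^{2^*_s}(\R^N)}$ and the asymptotics above, $t_\delta$ remains in a fixed compact interval $[a,b]\subset(0,\infty)$ for all small $\delta$.

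Putting things together with the elementary identity $\max_{t\ge0}\big(\frac{A}{2}t^2-\frac{B}{2^*_s}t^{2^*_s}\big)=\frac{s}{N}\big(A/B^{2/2^*_s}\big)^{N/2s}$ (applied with $A=[u_\delta]^2_{H^s(\R^N)}+\tau\|u_\delta\|^2_{L^2(\R^N)}$ and $B=\|u_\delta\|^{2^*_s}_{L^{2^*_s}(\R^N)}$), I would obtain, for $\delta$ small,
$$\max_{t\ge0}I_\tau(tu_\delta)=I_\tau(t_\delta u_\delta)\le\frac{s}{N}\Big(\frac{A}{B^{2/2^*_s}}\Big)^{N/2s}-\frac{1}{2}\Big(\frac{c}{\sigma}\Big)^2a^{2\sigma}\iint\frac{u_\delta^\sigma u_\delta^\sigma}{|x-y|^\mu}\le\frac{s}{N}S^{N/2s}+C_1\delta^{\theta}-C_2\delta^{\,2N-\mu-\sigma(N-2s)}.$$
Since $(f_4)$ gives $\sigma>q_{_{N}}=\max\{\tfrac{2N-2s}{N-2s},\tfrac{N+2s}{N-2s}\}$, one has $2N-\mu-\sigma(N-2s)<2N-\mu-q_{_{N}}(N-2s)=\min\{2s-\mu,\,N-2s-\mu\}=\theta-\mu<\theta$, so the term $C_2\delta^{\,2N-\mu-\sigma(N-2s)}$ dominates $C_1\delta^{\theta}$; hence, choosing $\delta>0$ small enough, $\max_{t\ge0}I_\tau(tu_\delta)<\frac{s}{N}S^{\frac{N}{2s}}$, and the lemma holds with $u=u_\delta$.

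The step I expect to be delicate is the convolution-term estimate together with the exponent bookkeeping: one has to carry out the Hardy--Littlewood--Sobolev rescaling for the truncated bubble and then verify, in both regimes $N\ge4s$ and $N<4s$ (and keeping track of the logarithm when $N=4s$), that the precise value $q_{_{N}}=\max\{\tfrac{2N-2s}{N-2s},\tfrac{N+2s}{N-2s}\}$ in $(f_4)$ is exactly what forces $2N-\mu-\sigma(N-2s)$ below the truncation-error exponent $\theta=\min\{2s,N-2s\}$. The fractional Talenti asymptotics used in the first paragraph are standard and I would simply cite them.
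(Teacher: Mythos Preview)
Your proposal is correct and follows essentially the same route as the paper: test $I_\tau$ with a truncated fractional Aubin--Talenti bubble, invoke the standard asymptotics \eqref{eq3.6}--\eqref{eq3.8}, bound the convolution term from below via $(f_4)$, and compare exponents against the truncation error $O(\delta^{\theta})$ with $\theta=\min\{2s,N-2s\}$. The only minor difference is that you keep the kernel $|x-y|^{-\mu}$ through the rescaling and obtain the sharper lower bound $C\delta^{\,2N-\mu-\sigma(N-2s)}$, whereas the paper bounds $|x-y|^{-\mu}\ge C$ on the fixed support $B_\delta\times B_\delta$ and gets $C\varepsilon^{\,2N-\sigma(N-2s)}$; both exponents fall strictly below $\theta$ precisely when $\sigma>q_{_N}$, so the conclusion is the same.
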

\begin{proof}
Let $\varphi\in C^{\infty}_0(\R^N)$ be such that $\varphi=1$ in $B_{\delta},$ $\varphi(x)=0$ in $\R^N\setminus B_{2\delta}.$ Denote
$$U_{\varepsilon}(x)=\varepsilon^{-\frac{N-2s}{2}}u^*(\frac{x}{\varepsilon})$$
where $u^*(x)=\frac{\tilde{u}(x/{S^{\frac{1}{2s}}})}{\|\tilde{u}\|_{L^{2^*_s}(\R^N)}},$ $\tilde{u}(x/{S^{\frac{1}{2s}}})=\frac{\alpha}{(1+|x/{S^{\frac{1}{2s}}}|^2)^{\frac{N-2s}{2}}}$ with $\alpha>0.$ We define $$u_{\varepsilon}(x):=\varphi(x)U_{\varepsilon}(x)$$
then $u_{\varepsilon}\in H_{\varepsilon}.$ From \cite{Nezza2012BDSM} and \cite{Sreenadh17NODEA}, we have the following estimations
\begin{equation}\label{eq3.6}
[u_{\varepsilon}]_{H^s(\R^N)}\leq S^{\frac{N}{2s}}+O(\varepsilon^{N-2s})
\end{equation}

\begin{equation}\label{eq3.7}
\int_{\R^N}|u_{\varepsilon}(x)|^{2^*_s}dx=S^{\frac{N}{2s}}+O(\varepsilon^N)
\end{equation}
\begin{equation}\label{eq3.8}
\int_{\R^N}|u_{\varepsilon}|^2dx=\left\{
\begin{array}{ll}  C_s\varepsilon^{2s}+O(\varepsilon^{N-2s}), \,\,\ \,\,\ \,\,\ \,\,\ \,\,\ \,\,\ \,\,\ \,\,\ if \,\,\ N>4s,\\
C_s\varepsilon^{2s}|ln\varepsilon|+O(\varepsilon^{2s}), \,\,\ \,\,\ \,\,\ \,\,\ \,\,\ \,\,\ \,\,\ if \,\,\ N=4s,\\
C_s\varepsilon^{N-2s}+O(\varepsilon^{N-2s}), \,\,\ \,\,\ \,\,\ \,\,\ \,\,\ \,\,\ if \,\,\ N<4s.
\end{array} \right.
\end{equation}
A standard argument shows that for any $u_{\varepsilon},$ there exists a unique $t_{\varepsilon}$ such that $t_{\varepsilon}u_{\varepsilon}\in\M_{\tau}$ and $I_{\tau}(t_{\varepsilon}u_{\varepsilon})=\max\limits_{t\geq0}I_{\tau}(tu_{\varepsilon}).$ As a consequence $m_{\tau}\leq I_{\tau}(t_{\varepsilon}u_{\varepsilon})$ and
$$[u_{\varepsilon}]_{H^s(\R^N)}^2+\tau\int_{\R^N}u_{\varepsilon}^2dx=t^{-1}_{\varepsilon}\int_{\R^N}(\frac{1}{|x|^{\mu}}\ast F(t_{\varepsilon}u_{\varepsilon}))f(t_{\varepsilon}u_{\varepsilon})u_{\varepsilon}dx+t^{2^*_s-2}_{\varepsilon}\int_{\R^N}|u_{\varepsilon}|^{2^*_s}dx.$$
As a consequence $t_{\varepsilon}\geq t_0,$ where $t_0>0$ is independent of $\varepsilon.$
Now, we estimate the convolution term. For $\varepsilon>0$ small enough, we have
\begin{equation}\label{eq3.9}
\begin{split}
\int_{\R^N}(\frac{1}{|x|^{\mu}}\ast F(t_\varepsilon u_{\varepsilon}))F(t_\varepsilon u_{\varepsilon})dx&\geq Ct_\varepsilon^{2\sigma}\int_{\R^N}(\frac{1}{|x|^{\mu}}\ast |u_{\varepsilon}|^{\sigma})|u_{\varepsilon}|^{\sigma}dx\\
&\geq Ct_0^{2\sigma}\int_{B_{\delta}}\int_{B_{\delta}}\frac{|u_{\varepsilon}(y)|^{\sigma}|u_{\varepsilon}(x)|^{\sigma}}{|x-y|^{\mu}}dxdy\\
&\geq Ct_0^{2\sigma}\int_{B_{\delta}}\int_{B_{\delta}}\frac{C_1\varepsilon^{\sigma(N-2s)}}{(\varepsilon^2+|y|^2)^{\frac{\sigma(N-2s)}{2}}(\varepsilon^2+|x|^2)^{\frac{\sigma(N-2s)}{2}}}dxdy\\
&\geq Ct_0^{2\sigma}\int_{B_{\frac{\delta}{2}}}\int_{B_{\frac{\delta}{2}}}\frac{C_2\varepsilon^{2N-\sigma(N-2s)}}{(1+|x|^2)^{\frac{\sigma(N-2s)}{2}}(1+|y|^2)^{\frac{\sigma(N-2s)}{2}}}dxdy\\
&\geq O(\varepsilon^{2N-\sigma(N-2s)})
\end{split}
\end{equation}
Set $g(t):=\frac{t^2}{2}([u_{\varepsilon}]_{H^s(\R^N)}^2+\tau\int_{\R^N}u_\varepsilon^2dx)-\frac{t^{2^*_s}}{2^*_s}\int_{\R^N}|u_{\varepsilon}|^{2^*_s}dx.$ If $N>4s,$
by a simple calculation, we get
\begin{equation}\label{eq3.10}
\begin{split}
\max\limits_{t\geq0}g(t)&=\frac{s}{N}(\frac{[u_{\varepsilon}]_{H^s{\R^N}}^2+\tau\int_{\R^N}u_\varepsilon^2dx}{\|u_{\varepsilon}\|^2_{2^*_s}})^{\frac{N}{2s}}\\
&=\frac{s}{N}(\frac{S^{\frac{N}{2s}}+O(\varepsilon^{N-2s})+O(\varepsilon^{2s})}{(S^{\frac{N}{2s}}+O(\varepsilon^N))^{\frac{N-2s}{N}}})^{\frac{N}{2s}}\\
&=\frac{s}{N}S^{\frac{N}{2s}}+O(\varepsilon^{N-2s})+O(\varepsilon^{2s})
\end{split}
\end{equation}
Nothing that $\sigma\in[q_{_N},\frac{2N-\mu}{N-2s}),$ for $\varepsilon>0$ small enough, using \eqref{eq3.9},\eqref{eq3.10} we can check
\begin{equation*}
\begin{split}
\max\limits_{t\geq0}I_{\tau}(tu_{\varepsilon})&\leq\max\limits_{t\geq0}g(t)-\int_{\R^N}(\frac{1}{|x|^{\mu}}\ast F(t_\varepsilon u_{\varepsilon}))F(t_\varepsilon u_{\varepsilon})dx\\
&<\frac{s}{N}S^{\frac{N}{2s}}+O(\varepsilon^{N-2s})-O(\varepsilon^{2N-\sigma(N-2s)})+O(\varepsilon^{2s})\\
&<\frac{s}{N}S^{\frac{N}{2s}}.
\end{split}
\end{equation*}
In similar way, we can check $N=4s$ and $N<4s.$
\end{proof}
\begin{lemma}\label{Lem3.6}
Let $\{u_n\}\subset H_{\varepsilon}$ be a $(PS)_d$ sequence of $J_{\varepsilon}$ with $d<\frac{s}{N}S^\frac{N}{2s}$ and $u_n\rightharpoonup0$ in $H_{\varepsilon}.$ Then one of the following conclusions holds:
\begin{itemize}
\item[$(a)$] $u_n\rightarrow0$ in $H_{\varepsilon};$
\item[$(b)$] There exists a sequence $\{y_n\}\subset\R^N$ and positive constants $r,\beta,$ such that $$\liminf\limits_{n\rightarrow\infty}\int_{B_r(y_n)}|u_n(x)|^2dx>\beta.$$
\end{itemize}
\end{lemma}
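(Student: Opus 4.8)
The plan is to establish the contrapositive of the dichotomy: assuming that alternative $(b)$ fails, I will show that $(a)$ must hold. So suppose that for every $r>0$
$$\lim_{n\to\infty}\sup_{y\in\R^N}\int_{B_r(y)}|u_n|^2\,dx=0 .$$
By Lemma \ref{Lem3.3} the sequence $\{u_n\}$ is bounded in $H^s(\R^N)$ and $\|u_n^-\|_\varepsilon=o_n(1)$, so Lemma \ref{Lem2.2} applies and yields $u_n^+\to0$ strongly in $L^{t}(\R^N)$ for every $t\in(2,2^*_s)$. Setting $t_0:=\tfrac{2N}{2N-\mu}$, the exponents $2t_0$ and $qt_0$ both lie in $(2,2^*_s)$ (for $2t_0$ this uses $0<\mu<2s$, and for $qt_0$ it is the computation already made in the proof of Lemma \ref{Lem3.1}), hence $u_n^+\to0$ in $L^{2t_0}(\R^N)\cap L^{qt_0}(\R^N)$.

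The first genuine step is to kill the Choquard contribution. From $(f_1)$ and $(f_2)$ one has $F(t)\le|t|^2+C|t|^{q}$ and $f(t)t\le|t|^2+C|t|^{q}$, so
$$\|F(u_n^+)\|_{L^{t_0}(\R^N)}+\|f(u_n^+)u_n^+\|_{L^{t_0}(\R^N)}\le C\big(\|u_n^+\|_{L^{2t_0}(\R^N)}^2+\|u_n^+\|_{L^{qt_0}(\R^N)}^{q}\big)\longrightarrow0 .$$
Since $\tfrac{1}{t_0}+\tfrac{\mu}{N}+\tfrac{1}{t_0}=2$, the Hardy--Littlewood--Sobolev inequality (Lemma \ref{Lem2.3}) then gives
$$\int_{\R^N}\Big(\frac{1}{|x|^{\mu}}\ast F(u_n^+)\Big)F(u_n^+)\,dx\to0 ,\qquad \int_{\R^N}\Big(\frac{1}{|x|^{\mu}}\ast F(u_n^+)\Big)f(u_n^+)u_n^+\,dx\to0 .$$

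The second step extracts the critical mass. Boundedness of $\{u_n\}$ together with $J'_\varepsilon(u_n)\to0$ gives $\langle J'_\varepsilon(u_n),u_n\rangle=o_n(1)$; since $f(t)=0$ for $t\le0$, this identity combined with the previous step reads $\|u_n\|_\varepsilon^2=\int_{\R^N}|u_n^+|^{2^*_s}\,dx+o_n(1)$. Passing to a subsequence, set $\ell:=\lim_n\int_{\R^N}|u_n^+|^{2^*_s}\,dx=\lim_n\|u_n\|_\varepsilon^2$. Feeding the same vanishing estimates into $J_\varepsilon(u_n)\to d$ gives $d=\big(\tfrac12-\tfrac{1}{2^*_s}\big)\ell=\tfrac{s}{N}\,\ell$. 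On the other hand, $|u^+(x)-u^+(y)|\le|u(x)-u(y)|$ and $V\ge V_0>0$ imply $[u_n^+]_{H^s(\R^N)}^2\le[u_n]_{H^s(\R^N)}^2\le\|u_n\|_\varepsilon^2$, so letting $n\to\infty$ in $S\|u_n^+\|_{L^{2^*_s}(\R^N)}^2\le[u_n^+]_{H^s(\R^N)}^2$ yields $S\,\ell^{2/2^*_s}\le\ell$. If $\ell>0$ this forces $\ell\ge S^{\frac{N}{2s}}$ and hence $d=\tfrac{s}{N}\,\ell\ge\tfrac{s}{N}S^{\frac{N}{2s}}$, contradicting the hypothesis $d<\tfrac{s}{N}S^{\frac{N}{2s}}$. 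Therefore $\ell=0$, i.e.\ $\|u_n\|_\varepsilon\to0$, which is precisely alternative $(a)$.

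The one point I expect to require real care is the vanishing of the two nonlocal integrals above: this is exactly where the structural restrictions $0<\mu<\min\{2s,N-2s\}$ and $q\in(\tfrac{2N-\mu}{N},\tfrac{2N-\mu}{N-2s})$ are used, ensuring $2t_0,qt_0\in(2,2^*_s)$ so that Lemma \ref{Lem2.2} is applicable and no $L^{2^*_s}$-norm — which need not vanish under the vanishing hypothesis — enters the estimate of the Choquard term. Everything else — the boundedness and $\|u_n^-\|_\varepsilon=o_n(1)$ from Lemma \ref{Lem3.3}, the exponent arithmetic, and the bookkeeping of constants — is routine and I would only sketch it.
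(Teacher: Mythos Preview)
Your proof is correct and follows essentially the same route as the paper's: assume the Lions vanishing condition, use Lemma~\ref{Lem2.2} to force subcritical $L^p$ decay, kill the Choquard terms via Hardy--Littlewood--Sobolev, reduce the $(PS)$ identity to $\|u_n\|_\varepsilon^2=\|u_n^+\|_{2^*_s}^{2^*_s}+o_n(1)$, and then derive a contradiction from the Sobolev inequality and the energy bound $d<\tfrac{s}{N}S^{N/2s}$. Your write-up is in fact slightly more careful than the paper's in distinguishing $u_n^+$ from $u_n$ and in spelling out why $2t_0,\,qt_0\in(2,2^*_s)$.
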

\begin{proof}
If $(b)$ does not occur, then for all $R>0,$ up to a subsequence
$$\lim\limits_{n\rightarrow\infty}\sup\limits_{y\in\R^N}\int_{B_R(y)}|u_n(x)|^2dx=0.$$
Since we know that $\{u_n\}$ is bounded in $H_{\varepsilon},$ we can use Lemma \ref{Lem2.2} to deduce that $u_n\rightarrow0$ in $L^r(\R^N)$ for any $r\in(2,2^*_s).$
So, apply Hardy-Littlewood-Sobolev inequality, we know that
\begin{equation}\label{eq3.11}
\int_{\R^N}(\frac{1}{|x|^{\mu}}\ast F(u^+_n))F(u^+_n)dx=o_n(1).
\end{equation}
Taking into account $\langle J'_{\varepsilon}(u_n),u_n\rangle=o_n(1)$ we can infer that $$\|u_n\|^2_{\varepsilon}=\|u_n\|^{2^*_s}_{L^{2^*_s}(\R^N)}+o_n(1).$$
Since $\{u_n\}$ is bounded, up to a subsequence, we have
$$\|u_n\|^2_{\varepsilon}\rightarrow l\geq0 \,\,\ \text{and} \,\,\ \|u_n\|^{2^*_s}_{L^{2^*_s}(\R^N)}\rightarrow l\geq0.$$
If $l>0,$ then
\begin{equation*}
\begin{split}
S&\leq\frac{\int_{\R^N}|(-\Delta)^{\frac{s}{2}}u_n|^2dx}{(\|u_n\|^{2^*_s}_{L^{2^*_s}(\R^N)})^{\frac{2}{2^*_s}}}\\
&\leq\frac{\|u_n\|^2_{\varepsilon}}{(\|u_n\|^{2^*_s}_{L^{2^*_s}(\R^N)})^{\frac{2}{2^*_s}}}\\
&\rightarrow l^{\frac{2s}{N}}
\end{split}
\end{equation*}
as $n\rightarrow \infty,$ hence $l\geq S^{\frac{N}{2s}}.$ Consequently, by \eqref{eq3.11}, we have
\begin{equation*}
\begin{split}
d&=\lim\limits_{n\rightarrow\infty}J_{\varepsilon}(u_n)\\
&=\lim\limits_{n\rightarrow\infty}\bigg(\frac{1}{2}\|u_n\|^2_{\varepsilon}-\frac{1}{2}\int_{\R^N}(\frac{1}{|x|^{\mu}}\ast F(u^+_n))F(u^+_n)dx-\frac{1}{2^*_s}\int_{\R^N}|u^+_n|^{2^*_s}dx\bigg)\\
&=\frac{s}{N}l\\
&>\frac{s}{N}S^{\frac{N}{2s}}
\end{split}
\end{equation*}
a contradiction, hence $l=0.$ Consequently, by the boundedness of $\{u_n\}$ in $H_{\varepsilon},$ we have $u_n\rightarrow0$ in $H_{\varepsilon},$ so $(a)$ holds. This completes the proof.
\end{proof}
\begin{lemma}\label{lem3.7}
Let $\{u_n\}\subset H_{\varepsilon}$ be a $(PS)_d$ sequence of $J_{\varepsilon}$ with $d<m_{_{V_{\infty}}}$ and $u_n\rightharpoonup0$ in $H_{\varepsilon}.$ Then $u_n\rightarrow0$ in $H_{\varepsilon}.$
\end{lemma}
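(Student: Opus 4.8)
The plan is to show that when the $(PS)_d$ level $d$ lies strictly below the limiting mountain pass level $m_{V_\infty}$, a weakly vanishing Palais--Smale sequence must in fact converge strongly to zero. I would argue by contradiction: assuming $u_n\not\to 0$ in $H_\varepsilon$, I would first invoke the dichotomy of Lemma \ref{Lem3.6}. Since $d<m_{V_\infty}$, and (as one checks from Lemma \ref{Lem3.5} together with Remark \ref{rem3.2}) $m_{V_\infty}<\frac{s}{N}S^{N/2s}$, the hypothesis $d<\frac{s}{N}S^{N/2s}$ of Lemma \ref{Lem3.6} is satisfied, so alternative $(a)$ (which is exactly the conclusion we want) or alternative $(b)$ holds; assuming $u_n\not\to 0$ forces the non-vanishing alternative $(b)$: there exist $r,\beta>0$ and $\{y_n\}\subset\R^N$ with $\liminf_n\int_{B_r(y_n)}|u_n|^2\,dx>\beta$.

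Next I would exploit the fact that $u_n\rightharpoonup 0$ in $H_\varepsilon$ to conclude that $|y_n|\to\infty$: if $\{y_n\}$ were bounded, then $B_r(y_n)\subset B_R(0)$ for some fixed $R$, and the compact embedding $H_\varepsilon\hookrightarrow L^2_{loc}(\R^N)$ of Lemma \ref{Lem2.4} would give $\int_{B_R(0)}|u_n|^2\,dx\to 0$, contradicting $(b)$. Then I set $\tilde u_n(x):=u_n(x+y_n)$. Since $\|\tilde u_n\|_{H^s}=\|u_n\|_{H^s}$ is bounded, up to a subsequence $\tilde u_n\rightharpoonup \tilde u$ in $H^s(\R^N)$, and the non-vanishing estimate passes to the limit (using local compactness) to yield $\tilde u\neq 0$, indeed $\tilde u^+\neq 0$ since $\|u_n^-\|_\varepsilon=o_n(1)$ by Lemma \ref{Lem3.3}.

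The crucial step is to identify the limiting problem. Because $|y_n|\to\infty$ and $(V_0)$ gives $V(\varepsilon x)\to$ something $\geq V_\infty$ along $x+y_n$ (more precisely $\liminf_{|x|\to\infty}V(x)=V_\infty$, so $V(\varepsilon(x+y_n))\geq V_\infty-o_n(1)$ locally uniformly), one passes to the limit in $\langle J_\varepsilon'(u_n),\varphi(\cdot-y_n)\rangle$ for test functions $\varphi\in C_0^\infty(\R^N)$; the convolution term is handled by the Brezis--Lieb type splitting and the Hardy--Littlewood--Sobolev inequality (Lemma \ref{Lem2.3}), the critical term by the usual weak-continuity of the duality pairing against fixed test functions. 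This shows $\tilde u$ is a nontrivial critical point of a functional dominating $I_{V_\infty}$, hence $\tilde u\in\M_{V_\infty}$ after a rescaling and therefore $I_{V_\infty}(\tilde u)\geq m_{V_\infty}$. Finally, using Fatou's lemma / weak lower semicontinuity on the representation $J_\varepsilon(u_n)-\frac{1}{2}\langle J_\varepsilon'(u_n),u_n\rangle$ (all terms nonnegative by $(f_3)$ and the elementary inequality $F(t)\leq\frac12 f(t)t$), one gets $d=\lim_n J_\varepsilon(u_n)\geq I_{V_\infty}(\tilde u)\geq m_{V_\infty}$, contradicting $d<m_{V_\infty}$. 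I expect the main obstacle to be the careful bookkeeping in passing the convolution nonlinearity to the limit under the translation $y_n\to\infty$ while simultaneously controlling the potential term $V(\varepsilon(\cdot+y_n))$ from below by $V_\infty$; the fractional Laplacian and the nonlocal term require the Brezis--Lieb splitting to be done with some care, but no genuinely new idea beyond what is already set up in the preliminaries.
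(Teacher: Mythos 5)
Your approach is genuinely different from the paper's: the paper never identifies a limiting equation for a translate of $u_n$; instead it picks $t_n>0$ with $t_nu_n\in\mathcal{M}_{V_\infty}$, proves $\limsup t_n\leq1$ by playing the Nehari identity for $t_nu_n$ against the almost--Nehari identity $\langle J_\varepsilon'(u_n),u_n\rangle=o_n(1)$ (using $(V_0)$ only through $V(\varepsilon x)\geq V_\infty-\xi$ for $|\varepsilon x|\geq R$ together with $u_n\to0$ in $L^2_{loc}$), and then closes the argument by splitting into the two cases $t_n\to1$ and $t_n\to t_0<1$. That route sidesteps precisely the two points where your sketch is weakest.

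The concrete gap is in your final chain $d=\lim_n J_\varepsilon(u_n)\geq I_{V_\infty}(\tilde u)\geq m_{V_\infty}$. From the limit (or near-limit) equation one does \emph{not} get $\tilde u\in\mathcal{M}_{V_\infty}$: since $(V_0)$ gives only $\liminf_{|x|\to\infty}V=V_\infty$ and no upper bound, passing to the limit in $\langle J_\varepsilon'(u_n),\varphi(\cdot-y_n)\rangle$ yields at best a one-sided relation, and testing with $\tilde u\geq0$ produces $\langle I_{V_\infty}'(\tilde u),\tilde u\rangle\leq0$, not $=0$. Consequently the number $t_0$ with $t_0\tilde u\in\mathcal{M}_{V_\infty}$ satisfies $t_0\leq1$ but need not equal $1$, so the inequality $I_{V_\infty}(\tilde u)\geq m_{V_\infty}$ is simply false in general (for $t_0<1$ one only knows $I_{V_\infty}(t_0\tilde u)\geq m_{V_\infty}$). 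Likewise, your Fatou step produces $d\geq P(\tilde u)$ where $P(v):=I_{V_\infty}(v)-\tfrac12\langle I_{V_\infty}'(v),v\rangle$ (the potential cancels), and this is $\geq I_{V_\infty}(\tilde u)$ only when $\langle I_{V_\infty}'(\tilde u),\tilde u\rangle\leq0$, which again decouples $I_{V_\infty}(\tilde u)$ from $m_{V_\infty}$. The missing ingredient to repair the chain is the monotonicity $t\mapsto P(t\tilde u)$ nondecreasing on $(0,\infty)$ (which requires $(f_3)$ \emph{together with} $f$ nondecreasing, and the analogous monotonicity of $t\mapsto \frac12 f(t)t-F(t)$ inside the convolution), so that $d\geq P(\tilde u)\geq P(t_0\tilde u)=I_{V_\infty}(t_0\tilde u)\geq m_{V_\infty}$; as written, your proposal skips this step and asserts the wrong intermediate inequality.
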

\begin{proof}
By Lemma \ref{Lem3.3} we can assume $u_n\geq0.$ For any subsequence of $\{u_n\}$ still denoted by $\{u_n\}.$ Since $u_n\rightharpoonup0$ in $H_{\varepsilon},$ up to a subsequence, we can assume $$u_n\rightarrow0 \text{ in } L^r_{loc}(\R^N) \,\,\ r\in[2,2^*_s) \,\,\ \text{ and } \,\,\ u_n(x)\rightarrow0 \,\,\ a.e. \,\,\ x\in\R^N.$$ If $u_n\nrightarrow0$ in $H_{\varepsilon},$ by Lemma \ref{Lem3.2} we know for any $\{t_n\}\subset(0,+\infty)$ $s.t.$ $\{t_nu_n\}\subset\mathcal{N}_{V_{\infty}}.$\\
Claim $\limsup\limits_{n\rightarrow\infty}\leq1.$
If does not occur for any $\delta>0,$ consider any subsequence of $\{t_n\}$ and satisfies the following
$$t_n\geq1+\delta, \,\,\ \,\,\ \,\,\ \forall \,\,\ n\in\N.$$
Since $\{u_n\}$ is a $(PS)_d$ sequence of $J_{\varepsilon},$ we can see that
\begin{equation}\label{eq3.12}
[u_n]^2_{H^s(\R^N)}+\int_{\R^N}V(\varepsilon x)|u_n|^2dx=\int_{H^s(\R^N)}(\frac{1}{|x|^{\mu}}\ast F(u_n))f(u_n)u_ndx+\int_{\R^N}|u_n|^{2^*_s}dx+o_n(1).
\end{equation}
We observe that $\{t_nu_n\}\subset\mathcal{N}_{V_{\infty}},$ we have
\begin{equation}\label{eq3.13}
t^2_n[u_n]^2_{H^s(\R^N)}+t^2_n\int{\R^N}V_{\infty}u_n^2dx=\int_{\R^N}(\frac{1}{|x|^{\mu}}\ast F(t_nu_n))f(t_nu_n)t_nu_ndx+\int_{\R^N}|t_nu_n|^{2^*_s}dx.
\end{equation}
Taking into account \eqref{eq3.12} and \eqref{eq3.13} we can deduce that
\begin{equation*}
\begin{split}
\int_{\R^N}(V_{\infty}-V(\varepsilon x))|u_n|^2dx=&\int_{\R^N}\big(\frac{(\frac{1}{|x|^{\mu}}\ast F(t_nu_n))f(t_nu_n)u_n^2}{t_nu_n}-\frac{(\frac{1}{|x|^{\mu}}\ast F(u_n))f(u_n)u_n^2}{u_n}\big)dx\\
&+\int_{\R^N}\big(\frac{|t_nu_n|^{2^*_s}}{t_n^2}-|u_n|^{2^*_s}\big)dx+o_n(1).
\end{split}
\end{equation*}
By $(V_0)$, for any $\xi>0$ there exists $R(\xi):=R>0$ such that $$V(\varepsilon x)\geq V_{\infty}-\xi, \,\,\ \,\,\,\ \,\,\ |\varepsilon x|\geq R.$$
Notice that $u_n\rightarrow0$ in $L^2(B_R(0))$ and the boundedness of $\{u_n\}$ in $H_{\varepsilon},$ we get
\begin{equation*}
\begin{split}
&\int_{\R^N}\big(\frac{(\frac{1}{|x|^{\mu}}\ast F(t_nu_n))f(t_nu_n)u_n^2}{t_nu_n}-\frac{(\frac{1}{|x|^{\mu}}\ast F(u_n))f(u_n)u_n^2}{u_n}\big)dx\\
\leq&\int_{\R^N}(V_{\infty}-V(\varepsilon x))|u_n|^2dx\\
=&\int_{B_R(0)}(V_{\infty}-V(\varepsilon x))|u_n|^2dx+\int_{B^c_R(0)}(V_{\infty}-V(\varepsilon x))|u_n|^2dx\\
\leq&V_{\infty}\int_{B_R(0)}|u_n|^2dx+\xi\int_{B_R^c(0)}|u_n|^2dx\\
\leq&o_n(1)+\frac{\xi}{V_0}\int_{B_R^c(0)}V(\varepsilon x)|u_n|^2dx\\
\leq&o_n(1)+\frac{\xi}{V_0}\|u_n\|^2_{\varepsilon}\leq o_n(1)+\xi C.
\end{split}
\end{equation*}
If $u_n\nrightarrow0,$ $\exists$ $\{y_n\}\subset\R^N,$ $r,\delta>0$ such that
$$\liminf\limits_{n\rightarrow\infty}\int_{B_r(y_n)}|u_n(x)|^2dx\geq\delta.$$
Let $\tilde{u}_n(x)=u_n(x+y_n)$ then there exists $\tilde{u},$ up to a subsequence, we have
$$\tilde{u}_n\rightharpoonup\tilde{u} \text{ in } H^s(\R^N), \,\,\ \,\,\ \tilde{u}_n\rightarrow\tilde{u} \text{ in } L_{loc}^t(\R^N), \,\,\ \,\,\ \tilde{u}_n(x)\rightarrow\tilde{u}(x) \,\,\ a.e. \,\,\ x\in\R^N.$$
So, $\exists\Omega\subset B_r(0)$ $s.t.$ $\tilde{u}>0$ in $\Omega,$ we can infer
$$\int_{\Omega}\big(\frac{(\frac{1}{|x|^{\mu}}\ast F((1+\delta)\tilde{u}))f((1+\delta)\tilde{u})}{(1+\delta)\tilde{u}}-(\frac{\frac{1}{|x|^{\mu}}\ast F(\tilde{u}))f(\tilde{u})}{\tilde{u}}\big)\tilde{u}^2dx\leq\xi C+o_n(1)$$
Taking the limit as $n\rightarrow\infty$ and by applying Fatou's lemma we obtain
$$0<\int_{\Omega}\int_{\Omega}\big(\frac{F((1+\delta)\tilde{u}(y))}{|x-y|^{\mu}}\frac{f((1+\delta)\tilde{u}(x))}{(1+\delta)\tilde{u}(x)}-\frac{F(\tilde{u}(y))}{|x-y|^{\mu}}\frac{f(\tilde{u}(x))}{\tilde{u}(x)}\big)\tilde{u}^2dxdy\leq\xi C$$
For any $\xi>0$, this gives a contradiction. Therefore, $\limsup\limits_{n\rightarrow\infty}t_n\leq1.$
Case1:Assume that $\limsup\limits_{n\rightarrow\infty}t_n=1.$ Hence there exists a subsequence of $\{t_n\},$ still denoted by $\{t_n\}$ such that $t_n\rightarrow1.$ Clearly,
$$d+o_n(1)=J_{\varepsilon}(u_n)\geq J_{\varepsilon}(u_n)+m_{_{V_{\infty}}}-I_{V_{\infty}}(t_nu_n).$$
Moreover,
\begin{equation*}
\begin{split}
J_{\varepsilon}(u_n)-I_{V_{\infty}}(t_nu_n)\geq&\frac{1-t_n^2}{2}[u_n]^2_{H^s(\R^N)}+\frac{1}{2}\int_{\R^N}(V(\varepsilon x)-t^2_nV_{\infty})|u_n|^2dx+\frac{1}{2^*_s}\int_{\R^N}(|t_nu_n|^{2^*_s}-|u_n|^{2^*_s})dx\\
&+\frac{1}{2}\int_{\R^N}\big((\frac{1}{|x|^{\mu}}\ast F(t_nu_n))F(t_nu_n)-(\frac{1}{|x|^{\mu}}\ast F(u_n))F(u_n)\big)dx.
\end{split}
\end{equation*}
Since $\{u_n\}$ is bounded in $H_{\varepsilon},$ by using the Mean Value Theorem and $t_n\rightarrow1,$ we have
\begin{equation*}
\begin{split}
\int_{\R^N}(V(\varepsilon x)-t_n^2V_{\infty})|u_n|^2dx&=\int_{B_R(0)}(V(\varepsilon x)-t_n^2V_{\infty})|u_n|^2dx+\int_{B_R^c(0)}(V(\varepsilon x)-t_n^2V_{\infty})|u_n|^2dx\\
&\geq(V_0-t_n^2V_{\infty})\int_{B_R(0)}|u_n|^2dx-\xi\int_{B^c_R(0)}|u_n|^2dx+V_{\infty}(1-t_n^2)\int_{B^c_R(0)}|u_n|^2dx\\
&\geq o_n(1)-\xi C
\end{split}
\end{equation*}
For any $\xi$ and this gives a contradiction.\\
Case2: $\limsup\limits_{n\rightarrow\infty}t_n:=t_0<1.$ Then there exists a subsequence of $\{t_n\},$ still denoted by $\{t_n\}$ such that $t_n\rightarrow t_0$ and $t_n<1$ for any $n\in\N,$ we deduce that
\begin{equation*}
\begin{split}
m_{_{V_{\infty}}}&\leq I_{V_{\infty}}(t_nu_n)\\
&=J_{\varepsilon}(t_nu_n)+\frac{t_n^2}{2}\int_{\R^N}(V_{\infty}-V(\varepsilon x))|u_n|^2dx\\
&=J_{\varepsilon}(t_nu_n)+C\xi+o_n(1)\\
&=d+C\xi+o_n(1).
\end{split}
\end{equation*}
This gives a contradiction.
\end{proof}
\par
By similar argument as the Lemma3.1 in \cite{Alves2014Multiplicity} and Lemma4.7 in \cite{Zhang2015Solutions}, we have the following lemma.
\begin{lemma}\label{lem3.8}
Let $\{u_n\}$ be a sequence such that $u_n\rightharpoonup u$ in $H_{\varepsilon}$ and $w_n:=u_n-u.$ Then, we have
\begin{itemize}
\item[$(i)$]  $\int_{\R^N}|F(w_n)-F(u_n)+F(u)|^tdx=o_n(1)$ where $t=\frac{2N}{2N-\mu}.$
\item[$(ii)$]  $\int_{\R^N}(\frac{1}{|x|^{\mu}}\ast F(u_n-u))F(u_n-u)dx-\int_{\R^N}(\frac{1}{|x|^{\mu}}\ast F(u_n))F(u_n)dx+\int_{\R^N}(\frac{1}{|x|^{\mu}}\ast F(u))F(u)dx=o_n(1).$
\item[$(iii)$]  $\forall$ $\xi>0,$ we have $$\int_{\R^N}|f(u_n-u)-f(u_n)+f(u)|^t|\varphi|^tdx\leq C\xi\|\varphi\|^t_{\varepsilon}\leq C\xi. \,\,\ \,\,\ \forall \varphi\in H_{\varepsilon}(\R^N),\,\,\ \|\varphi\|_{\varepsilon}=1.$$
\item[$(iv)$]  $$\big|\int_{\R^N}(\frac{1}{|x|^{\mu}}\ast F(u_n-u))f(u_n-u)\varphi dx-\int_{\R^N}(\frac{1}{|x|^{\mu}}\ast F(u_n))f(u_n)\varphi dx+\int_{\R^N}(\frac{1}{|x|^{\mu}}\ast F(u))f(u)\varphi dx|\leq C\xi\|\varphi\|_{\varepsilon}.$$
    where, $\xi>0,$ $\varphi\in H_{\varepsilon}(\R^N).$
\end{itemize}
\end{lemma}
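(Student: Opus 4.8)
The statement to prove is the Brezis–Lieb type splitting lemma (Lemma~\ref{lem3.8}) for the continuous nonlinearity $f$ and the associated Choquard term. The proof is organized exactly along the four assertions, and each is reduced to a now-standard "compactness + growth estimate" argument, the novelty being that $f$ is only continuous (so no Ambrosetti–Rabinowitz or $C^1$ structure is available) and the nonlocal Hardy–Littlewood–Sobolev structure must be carried through.

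First I would prove $(i)$. Since $u_n\rightharpoonup u$ in $H_\varepsilon$, by Lemma~\ref{Lem2.4} we have $u_n\to u$ in $L^r_{loc}(\R^N)$ for $r\in[2,2^*_s)$ and $u_n(x)\to u(x)$ a.e., while $\{u_n\}$ is bounded in $H_\varepsilon$. From $(f_1)$–$(f_2)$ and \eqref{eq3.1} one gets $|F(t)|\le \xi|t|^2 + C_\xi|t|^q$, hence $\{|F(u_n)|^t\}$ and $\{|F(w_n)|^t\}$ are bounded in $L^1\cap L^{2^*_s/(qt)}$-type spaces with $tq\in(2,2^*_s)$; the pointwise identity $F(w_n)-F(u_n)+F(u)\to 0$ a.e.\ follows from continuity of $F$ and $w_n(x)\to 0$ a.e. The heart of $(i)$ is a Brezis–Lieb argument: one shows the family $|F(w_n)-F(u_n)+F(u)|^t$ is uniformly integrable (equi-integrability via the growth bound and the fact that $u_n-w_n=u$ is fixed), so by the Vitali convergence theorem the integral tends to $0$. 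Here the standard device is the elementary inequality: for any $\eta>0$ there is $C_\eta$ with $\big||F(a+b)-F(a)|\big|^t \le \eta|a|^{2t}+\eta|a|^{2^*_s\cdot\text{(something)}}+C_\eta\,\omega(b)$ where $\omega(b)$ is a fixed integrable-in-the-limit quantity; splitting accordingly and sending $n\to\infty$ then $\eta\to 0$ gives the claim.

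Next, $(ii)$ is a consequence of $(i)$ combined with the Hardy–Littlewood–Sobolev inequality (Lemma~\ref{Lem2.3} with $r=t=\tfrac{2N}{2N-\mu}$): writing $A_n:=F(u_n)$, $B_n:=F(w_n)$ and $C:=F(u)$, one expands the bilinear form $\mathcal B(g,h):=\int\int \tfrac{g(x)h(y)}{|x-y|^\mu}\,dxdy$ and uses $\mathcal B(B_n,B_n)-\mathcal B(A_n,A_n)+\mathcal B(C,C) = \mathcal B(B_n-A_n+C,\,B_n+A_n-C)+2\mathcal B(C,B_n-A_n)-\ldots$; every term either carries a factor $\|B_n-A_n+C\|_{L^t}=o_n(1)$ from $(i)$ (controlled by HLS against the bounded quantities $\|A_n\|_{L^t},\|B_n\|_{L^t}$) or is of the form $\mathcal B(C,B_n)$ with $B_n=F(w_n)\rightharpoonup 0$ weakly in $L^t$ (since $w_n\rightharpoonup 0$ and $F$ has subcritical growth, using local strong convergence to $0$ and tightness), which tends to $0$ because $\mathcal B(C,\cdot)$ is a bounded linear functional on $L^t$. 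Then $(iii)$ is the $f$-analogue of $(i)$ at the level of the test function: from \eqref{eq3.1}, $|f(u_n-u)-f(u_n)+f(u)|^t$ is bounded in the right space and tends to $0$ a.e.; multiplying by $|\varphi|^t$, applying Hölder with exponents matched so that $\varphi$ sits in $L^{2^*_s}$ or $L^{tq/(tq-t)}\subset[2,2^*_s]$, and using that the $f$-combination is small in measure-weighted norm up to a set of small measure, yields the bound $C\xi\|\varphi\|_\varepsilon^t$ for $n$ large; one absorbs the "$+o_n(1)$" into $C\xi$ by first fixing $\xi$. Finally $(iv)$ combines $(iii)$ with HLS exactly as $(ii)$ combined $(i)$: expand the trilinear expression $\int (\tfrac{1}{|x|^\mu}\ast F(\cdot))f(\cdot)\varphi$, and bound each piece either by HLS $\times$ ($L^t$-smallness from $(i)$ or $(iii)$) $\times \|\varphi\|$, or by a weak-convergence-to-zero argument for the cross terms involving the fixed function $u$.

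\textbf{Main obstacle.} The delicate point is the equi-integrability in $(i)$ and $(iii)$: because $f$ is merely continuous we cannot differentiate, so the usual convexity trick for $C^1$ nonlinearities is unavailable, and one must instead use the pointwise algebraic inequality $|F(a+b)-F(a)|\le \eta(|a+b|^{2^*_s}+|a+b|^{2} ) + C_\eta(|b|^2+|b|^q+\text{critical-$b$ term})$ (and its $f$-version with exponent $t$), valid by $(f_1)$–$(f_2)$, to split the integrand into an $\eta$-controlled part plus a part that genuinely converges by local compactness; letting $n\to\infty$ and then $\eta\to 0^+$ closes the estimate. The nonlocal terms $(ii)$ and $(iv)$ are then bookkeeping on top of Lemma~\ref{Lem2.3}, provided one is careful that all the $L^t$-norms appearing as "bounded factors" are indeed uniformly bounded, which follows from the subcritical range $tq\in(2,2^*_s)$ and the $H_\varepsilon$-boundedness of $\{u_n\}$.
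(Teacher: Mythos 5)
Your high‑level plan matches the paper: prove the $F$-level Brezis–Lieb limit (i), deduce the Choquard limit (ii) from (i) via the Hardy–Littlewood–Sobolev inequality plus weak convergence of $F(w_n)$ in $L^t$, prove an analogous smallness estimate at the $f$-level (iii), and deduce (iv) from (iii) and HLS. Parts (i), (ii), (iv) are essentially what the paper does (the paper proves (i) by the Mean Value Theorem applied to $F$ — which is legitimate since $F$ is $C^1$ even though $f$ is only $C^0$, so your claim ``we cannot differentiate'' is off for this part — followed by the truncation $G_{\delta,n}$ and dominated convergence; your Vitali/equi-integrability route is an equivalent variant).

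However, your treatment of (iii) has a genuine gap. You need, uniformly in $\varphi$ with $\|\varphi\|_\varepsilon=1$, that
$\int_{\R^N}|f(u_n-u)-f(u_n)+f(u)|^t|\varphi|^t\,dx\le C\xi$ for $n$ large, and the difficulty is the region where $|u_n|$ stays in a fixed compact interval bounded away from $0$ and from $\infty$. There, neither the $(f_1)$ bound $|f(t)|\le\xi|t|$ (valid only for small $|t|$) nor the $(f_2)$ bound $|f(t)|\le\xi|t|^{q-1}$ (valid only for large $|t|$) gives smallness of $f(u_n-u)-f(u_n)$, and the set in question, while of bounded measure (since $\|u_n\|_2$ is bounded), is \emph{not} of small measure. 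Your phrase ``small in measure-weighted norm up to a set of small measure'' therefore does not cover it. The paper handles this middle region $C_n=\{N_0<|u_n|<N_1\}\cap\{|u|\le\delta\}$ by invoking the uniform continuity of $f$ on the compact interval $[-(N_0+N_1),N_0+N_1]$: choosing $\delta$ small so that $|f(t_1)-f(t_2)|\le N_0\xi$ whenever $|t_1-t_2|\le\delta$ within that interval, then integrating using the uniform bound $|C_n|\le C/N_0^2$. This uniform-continuity device is the key technical idea that compensates for $f$ being merely continuous, and it is missing from your sketch. Without it, the estimate on the intermediate region does not close, so the proof of (iii) — and hence of (iv), which relies on it — is incomplete.
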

\begin{proof}
$(i)$ By the Mean Value Theorem and \eqref{eq3.1}, it follows that
\begin{equation*}
\begin{split}
|F(w_n)-F(u_n)|&=|\int^1_0(\frac{d}{dt}F(u_n-tu))dt|\\
&\leq\int^1_0|uf(u_n-tu)|dt\\
&\leq\int^1_0(\xi|u||u_n-tu|+C_{\xi}|u||u_n-tu|^{q-1})dt\\
&\leq\xi|u_n||u|+\xi|u|^2+C_{\xi}|u_n|^{q-1}|u|+C_{\xi}|u|^{q}.
\end{split}
\end{equation*}
By applying Young inequality with $\delta>0,$ we get
$$|F(w_n)-F(u_n)|\leq\delta(|u_n|^2+|u_n|^{q})+C_{\delta}(|u|^2+|u|^{q})$$
which yields
$$|F(w_n)-F(u_n)+F(u)|\leq\delta(|u_n|^2+|u_n|^{q})+C_{\delta}(|u|^2+|u|^{q})+C(|u|^2+|u|^{q}).$$
\begin{equation*}
\begin{split}
|F(w_n)-F(u_n)+F(u)|^t&\leq4^t\delta(|u_n|^{2t}+|u_n|^{qt})+C(|u_n|^{2t}+|u_n|^{qt})\\
&\leq4^t\delta(|u_n|^{2t}+|u_n|^{qt}-|u_n|^{2t}+|u_n|^{qt})+C_1(|u_n|^{2t}+|u_n|^{qt})
\end{split}
\end{equation*}
Let $$G_{\delta,n}(x)=\max\big\{|F(w_n)-F(u_n)+F(u)|^t-4^t\delta(|u_n|^{2t}+|u_n|^{qt}-|u|^{2t}-|u|^{qt}),0\big\}.$$
Then $G_{\delta,n}\rightarrow0$ $a.e.$ in $\R^N$ as $n\rightarrow\infty$ and $0\leq G_{\delta,n}\leq C_1(|u|^{2t}+|u|^{qt})\in L^1(\R^N).$ As a consequence of the Dominated Convergence Theorem, we have $$\int_{\R^N}G_{\delta,n}(x)dx\rightarrow0 \,\,\ as \,\,\ n\rightarrow\infty.$$
On the other hand, from the definition of $G_{\delta,n}$, we get $$|F(w_n)-F(u_n)+F(u)|^t\leq4^t\delta(|u_n|^{2t}+|u_n|^{qt})+G_{\delta,n}$$
which together with the boundedness of $\{u_n\}$ gives
$$\limsup\limits_{n\rightarrow\infty}\int_{\R^N}|F(w_n)-F(u_n)+F(u)|^tdx\leq C\delta \,\,\ \text{for some} \,\,\ C>0.$$
As $\delta$ is arbitrary, we obtain
$$\int_{\R^N}|F(w_n)-F(u_n)+F(u)|^tdx=o_n(1).$$
$(ii)$
\begin{equation*}
\begin{split}
&\int_{\R^N}(\frac{1}{|x|^{\mu}}\ast F(u_n-u))F(u_n-u)dx-\int_{\R^N}(\frac{1}{|x|^{\mu}}\ast F(u_n))F(u_n)dx+\int_{\R^N}(\frac{1}{|x|^{\mu}}\ast F(u))F(u)dx\\
=&\int_{\R^N}(\frac{1}{|x|^{\mu}}\ast F(u_n-u))(F(u_n-u)-F(u_n)+F(u))dx+\int_{\R^N}(\frac{1}{|x|^{\mu}}\ast F(u_n))(F(u_n-u)-F(u_n)+F(u))dx\\
& \,\,\ +\int_{\R^N}(\frac{1}{|x|^{\mu}}\ast F(u))(F(u_n-u)-F(u_n)+F(u))dx-2\int_{\R^N}(\frac{1}{|x|^{\mu}}\ast F(u))F(u_n-u)dx\\
=&:I_1+I_2+I_3+I_4
\end{split}
\end{equation*}
By the boundedness of $\{u_n\}$ and $(f_1)-(f_2).$ we know that $$\big(\int_{\R^N}|F(u_n-u)|^tdx\big)^{\frac{1}{t}}\leq C.$$
From Lemma \ref{Lem2.3}, we have
$$|I_1|\leq\big(\int_{\R^N}|F(u_n-u)|^tdx\big)^{\frac{1}{t}}\big(\int_{\R^N}|F(u_n-u)-F(u_n)-F(u)|^tdx\big)^{\frac{1}{t}}\rightarrow0$$
Likewise, $I_2\rightarrow0,$ $I_3\rightarrow0.$
By the boundedness of $\{u_n\},$ we have $\{F(u_n-u)\}$ is bounded in $L^{\frac{2N}{2N-\mu}}(\R^N)$ and $F(u_n-u)\rightarrow0$ $a.e.$ in $\R^N.$
So, $F(u_n-u)\rightharpoonup0$ in $L^{\frac{2N}{2N-\mu}}(\R^N).$ In view of $\frac{1}{|x|^{\mu}}\ast F(u)\in\big(L^{\frac{2N}{2N-\mu}}(\R^N)\big)^{\ast},$ we obtain $$I_4=-2\int_{\R^N}(\frac{1}{|x|^{\mu}}\ast F(u))F(u_n-u)dx\rightarrow0 \text{ as } n\rightarrow\infty$$
Therefore, we can conclude $(ii)$ holds.\\
$(iii)$ By using $(f_1)$ and $(f_2),$ we know that for any $\xi>0,$ there exists $N_0\in(0,1)$ and $N_1>2$ such that  $$|f(t)|\leq\xi|t| \,\,\ \,\,\ \text{ in } \,\,\ \,\,\ |t|\leq2N_0,$$ $$|f(t)|\leq\xi|t|^{q-1} \,\,\ \,\,\ \text{ in } \,\,\ \,\,\ |t|\geq N_1-1,$$ $$|f(t)|\leq C_{\xi}|t|+\xi|t|^{q-1} \,\,\ \text{ for } \,\,\ t\in\R.$$ Since $f$ is a continuous function, we deduce that exists $\delta\in(0,N_0)$ such that
$$|f(t_1)-f(t_2)|\leq N_0\xi, \,\,\ \,\,\ \forall |t_1|\leq N_0+N_1,|t_2|\leq N_0+N_1 \text{ and }|t_1-t_2|\leq\delta.$$
Taking into account $u\in H_{\varepsilon}(\R^N),$ we know that there exists $R_0>0$ such that
$$\big(\int_{B^c_{R_0}(0)}|u|^{2t}dx\big)^{\frac{1}{2}}<\xi, \,\,\ \big(\int_{B^c_{R_0}(0)}|u|^{tq}dx\big)^{\frac{q-1}{q}}<\xi$$
For any $\varphi\in H_{\varepsilon}(\R^N),$ $\|\varphi\|_\varepsilon=1,$ we have
\begin{equation*}
\begin{split}
\int_{B_{R_0}^c(0)}|f(u)\varphi|^{\frac{2N}{2N-\mu}}dx&\leq\int_{B_{R_0}^c(0)}(\xi|u||\varphi|+C_{\xi}|u|^{q-1})^{\frac{2N}{2N-\mu}}|\varphi|^{\frac{2N}{2N-\mu}}dx\\
&\leq\int_{B^c_{R_0}(0)}(2^t\xi|u|^t+C|u|^{t(q-1)})|\varphi|^tdx\\
&\leq 2^t\xi\big(\int_{B^c_{R_0}(0)}|u|^2dx\big)^{\frac{1}{2}}\big(\int_{B^c_{R_0}(0)}|\varphi|^{2t}dx\big)^{\frac{1}{2}}+C\big(\int_{B^c_{R_0}(0)}|u|^{tq}dx\big)^{\frac{q-1}{q}}\big(\int_{B^c_{R_0}(0)}|\varphi|^{qt}dx\big)^{\frac{1}{q}}\\
&\leq C\xi\|\varphi\|_{\varepsilon}^t
\end{split}
\end{equation*}
We denote $A_n:=\big\{B^c_{R_0}(0):|u_n(x)|\leq N_0\big\},$ $B_n:=\big\{B^c_{R_0}(0):|u_n(x)|\geq N_1\big\},$ $C_n:=\big\{B^c_{R_0}(0):N_0<|u_n(x)|< N_1\big\}$
\begin{equation}\label{eq3.14}
\begin{split}
&\int_{A_n\bigcap\{|u|\leq\delta\}}|f(u_n-u)-f(u_n)|^t|\varphi|^tdx\\
\leq&\xi\int_{A_n\bigcap\{|u|\leq\delta\}}(|u_n-u|+|u_n|)^t|\varphi|^t dx\\
\leq&2^t\xi^t(\int_{A_n\bigcap\{|u|\leq\delta\}}|u_n-u|^t|\varphi|^tdx+\int_{A_n\bigcap\{|u|\leq\delta\}}|u_n|^t|\varphi|^tdx)\\
\leq&2^t\xi^t\big( (\int_{A_n\bigcap\{|u|\leq\delta\}}|u_n-u|^{2t}dx)^{\frac{1}{2}}+(\int_{A_n\bigcap\{|u|\leq\delta\}}|u_n|^{2t}dx)^{\frac{1}{2}}\big)(\int_{A_n\bigcap\{|u|\leq\delta\}}|\varphi|^{2t}dx)^{\frac{1}{2}}\\
\leq&\xi^t C\|\varphi\|_{\varepsilon}^t
\end{split}
\end{equation}
\begin{equation}\label{eq3.15}
\begin{split}
&\int_{B_n\bigcap\{|u|\leq\delta\}}|f(u_n-u)-f(u_n)|^t|\varphi|^tdx\\
\leq&\xi^t(\int_{B_n\bigcap\{|u|\leq\delta\}}(|u_n-u|^{q-1}+|u_n|^{q-1})^t|\varphi|^tdx\\
\leq&2^t\xi^t\big(\int_{B_n\bigcap\{|u|\leq\delta\}}|u_n-u|^{t(q-1)}|\varphi|^tdx+\int_{B_n\bigcap\{|u|\leq\delta\}}|u_n|^{t(q-1)}|\varphi|^tdx\big)\\
\leq&2^t\xi^t\big[(\int_{B_n\bigcap\{|u|\leq\delta\}}|u_n-u|^{tq}dx)^{\frac{q-1}{q}}(\int_{B_n\bigcap\{|u|\leq\delta\}}|\varphi|^{tq}dx)^{\frac{1}{q}}+(\int_{B_n\bigcap\{|u|\leq\delta\}}|u_n-u|^{tq}dx)^{\frac{q-1}{q}}(\int_{B_n\bigcap\{|u|\leq\delta\}}|\varphi|^{tq}dx)^{\frac{1}{q}}\big]\\
\leq&2^t\xi^t\big[(\int_{B_n\bigcap\{|u|\leq\delta\}}|u_n-u|^{tq}dx)^{\frac{q-1}{q}}+(\int_{B_n\bigcap\{|u|\leq\delta\}}|u_n-u|^{tq}dx)^{\frac{q-1}{q}}\big](\int_{B_n\bigcap\{|u|\leq\delta\}}|\varphi|^{tq}dx)^{\frac{1}{q}}\\
\leq&\xi^tC\|\varphi\|_{\varepsilon}^t
\end{split}
\end{equation}
\begin{equation}\label{eq3.16}
\int_{C_n\bigcap\{|u|\leq\delta\}}|f(u_n-u)-f(u_n)|^t|\varphi|^tdx\leq N_0^t\xi^t\int_{C_n}|\varphi|^tdx\leq N_0^t\xi^t|C_n|^{\frac{1}{2}}(\int_{\R^N}|\varphi|^{2t}dx)^{\frac{1}{2}}\leq\xi^t C\|\varphi\|_{\varepsilon}
\end{equation}
Thus, putting together \eqref{eq3.14}, \eqref{eq3.15} and \eqref{eq3.16} we get
$$\int_{(B^c_{R_0}(0))\bigcap\{|u|\leq\delta\}}|f(u_n)-f(u_n-u)|^t|\varphi|^tdx\leq C\xi\|\varphi\|_{\varepsilon}.$$
Moreover,
\begin{equation*}
\begin{split}
&\int_{(B^c_{R_0}(0))\bigcap\{|u|>\delta\}}|f(u_n)-f(u_n-u)|^t|\varphi|^tdx\\
\leq&C_{\xi}\int_{(B^c_{R_0}(0))\bigcap\{|u|>\delta\}}2^t(|u_n-u|^{t}|\varphi|^t+|u_n|^{t}|\varphi|^t)dx+\xi\int_{(B^c_{R_0}(0))\bigcap\{|u|>\delta\}}2^t(|u_n-u|^{(q-1)t}|\varphi|^t+|u_n|^{(q-1)t}|\varphi|^t)dx\\
\leq&\xi C\|\varphi\|_{\varepsilon}+C_{\xi}2^t\int_{(B^c_{R_0}(0))\bigcap\{|u|>\delta\}}(|u_n-u|^t+|u_n|^t)|\varphi|^tdx
\end{split}
\end{equation*}
In view of $u\in H_{\varepsilon}$ we know that $\big|(\R^N\setminus B_{R}(0))\bigcap\{|u|>\delta\}\big|\rightarrow0$ in $R\rightarrow\infty,$ then there exists $R_1>0$ $s.t.$ $\big|(\R^N\setminus B_{R_1}(0))\bigcap\{|u|>\delta\}\big|<\xi.$ we define $R_2=\max\{R_0,R_1\},$ we deduce that
\begin{equation*}
\begin{split}
&\int_{(B_{R_2}^c(0))\bigcap\{|u|>\delta\}}|u_n-u|^t|\varphi|^tdx\\
\leq&\big(\int_{(B^c_{R_2}(0))\bigcap\{|u|>\delta\}}|u_n-u|^{t\frac{2^*_s}{t}}dx\big)^{\frac{t}{2^*_s}}\big(\int_{(B^c_{R_2}(0))\bigcap\{|u|>\delta\}}|\varphi|^{t\frac{2^*_s}{t}}dx\big)^{\frac{t}{2^*_s}}\big(\int_{(B^c_{R_2}(0))\bigcap\{|u|>\delta\}}1dx\big)^{\frac{4s-\mu}{2N-\mu}}\\
\leq&C\xi^{\frac{4s-\mu}{2N-\mu}}\|\varphi\|^t_{\varepsilon}
\end{split}
\end{equation*}
In similar way, we can prove that the follow inequality is true.
$$\int_{(B^c_{R_2}(0))\bigcap\{|u|>\delta\}}|u_n|^t|\varphi|^tdx\leq C\xi^{\frac{2s-\mu}{2N-\mu}}\|\varphi\|^t_{\varepsilon}$$
Hence,
$$\int_{B^c_{R_2}(0)}|f(w_n)+f(u)-f(u_n)|^t|\varphi|^tdx\leq\xi C\|\varphi\|_{\varepsilon}^t.$$
It is easy to verify that
\begin{equation}\label{eq3.17}
\int_{B_{R_2}(0)}|f(w_n)+f(u)-f(u_n)|^t|\varphi|^tdx\leq C\xi\|\varphi\|_{\varepsilon}.
\end{equation}
Since $u_n$ is bounded, we have $u_n\rightarrow u$ $a.e.$ in $\R^N,$ $u_n\rightarrow u$ in $L^p_{loc}(\R^N),$ $p\in[1,2^*_s).$ Let $l>q,$ such that $lt\in(2,2^*_s),$ $\frac{l}{l-1}t(q-1)\in(2,2^*_s).$ And
$$1<\frac{l}{l-1}t\leq\frac{q}{q-1}t=(1+\frac{1}{q-1})t<(1+\frac{N}{N-\mu})\cdot\frac{2N}{2N-\mu}<\frac{2N}{N-2s}=2^*_s.$$
Hence, we have
\begin{equation*}
\begin{split}
&|f(u_n-u)-f(u_n)+f(u)|^{t\frac{l}{l-1}}\\
\leq&C(|u_n-u|+|u_n-u|^{q-1}+|u_n|+|u_n|^{q-1}+|u|+|u|^{q-1})^{t\frac{l}{l-1}}\\
\leq&C(|u_n-u|^{t\frac{l}{l-1}}+|u_n-u|^{t\frac{l}{l-1}(q-1)}+|u_n|^{t\frac{l}{l-1}}+|u_n|^{t\frac{l}{l-1}(q-1)}+|u|^{t\frac{l}{l-1}}+|u|^{t\frac{l}{l-1}(q-1)})\\
=&:Ch_n.
\end{split}
\end{equation*}
Thus,
\begin{equation*}
\begin{split}
2\int_{B_{R_2}(0)}C(|u|^{\frac{l}{l-1}t}+|u|^{\frac{l}{l-1}t(q-1)})dx&=\int_{B_{R_2}(0)}\lim\limits_{n\rightarrow\infty}(Ch_n-|f(u_n-u)-f(u_n)+f(u)|^{\frac{l}{l-1}t})dx\\
&\leq\lim\limits_{n\rightarrow\infty}\big[C\int_{B_{R_2}(0)}h_ndx-\int_{B_{R_2}(0)}|f(u_n-u)-f(u_n)+f(u)|^{t\frac{l}{l-1}}dx\big]\\
&=\lim\limits_{n\rightarrow\infty}C\int_{B_{R_2}(0)}h_ndx-\limsup\limits_{n\rightarrow\infty}\int_{B_{R_2}(0)}|f(u_n-u)-f(u_n)+f(u)|^{t\frac{l}{l-1}}dx\\
&=2\int_{B_{R_2}(0)}C(|u|^{\frac{l}{l-1}t}+|u|^{\frac{l}{l-1}t(q-1)})dx\\
& \,\,\,\ -\limsup\limits_{n\rightarrow\infty}\int_{B_{R_2}(0)}|f(u_n-u)-f(u_n)+f(u)|^{t\frac{l}{l-1}}dx
\end{split}
\end{equation*}
Consequently,
$$0\leq\liminf\limits_{n\rightarrow\infty}\int_{B_{R_2}(0)}|f(u_n-u)-f(u_n)+f(u)|^{t\frac{l}{l-1}}dx\leq\limsup\limits_{n\rightarrow\infty}\int_{B_{R_2}(0)}|f(u_n-u)-f(u_n)+f(u)|^{t\frac{l}{l-1}}dx\leq0$$
So, we obtain \eqref{eq3.17} hold. By applying H\"{o}lder inequality, for any $\xi>0,$ $n$ large enough, we have
\begin{equation*}
\begin{split}
&\int_{B_{R_2}(0)}|f(u_n-u)-f(u_n)+f(u)|^t|\varphi|^tdx\\
\leq&\big(\int_{B_{R_2}(0)}|f(u_n-u)-f(u_n)+f(u)|^{t\frac{l}{l-1}}dx\big)^{\frac{l-1}{l}}\big(\int_{B_{R_2}(0)}|\varphi|^{tl}dx\big)^{\frac{1}{l}}\\
<&\xi\|\varphi\|_{\varepsilon}^t=\xi
\end{split}
\end{equation*}
As a consequence, $\int_{\R^N}|f(u_n-u)-f(u_n)+f(u)|^t|\varphi|^tdx\leq\xi\|\varphi\|^t_{\varepsilon}=\xi.$\\
$(iv)$
\begin{equation*}
\begin{split}
&\int_{\R^N}(\frac{1}{|x|^{\mu}}\ast F(u_n-u))f(u_n-u)\varphi dx-\int_{\R^N}(\frac{1}{|x|^{\mu}}\ast F(u_n))f(u_n)\varphi dx+(\frac{1}{|x|^{\mu}}\ast F(u))f(u)\varphi dx\\
=&\int_{\R^N}(\frac{1}{|x|^{\mu}}\ast F(u_n-u))(f(u_n-u)-f(u_n)+f(u))\varphi dx+\int_{\R^N}(\frac{1}{|x|^{\mu}}\ast F(u))(f(u)-f(u_n))\varphi dx\\
&+\int_{\R^N}(\frac{1}{|x|^{\mu}}\ast(F(u_n-u)-F(u_n)+F(u)))f(u)\varphi dx-\int_{\R^N}(\frac{1}{|x|^{\mu}}\ast F(u_n-u))f(u)\varphi dx\\
=&I_1+I_2+\int_{\R^N}(\frac{1}{|x|^{\mu}}\ast F(u))(f(u)-f(u_n)+f(u_n-u))\varphi dx\\
&-\int_{\R^N}(\frac{1}{|x|^{\mu}}\ast F(u))f(u_n-u)\varphi dx-\int_{\R^N}(\frac{1}{|x|^{\mu}}\ast F(u_n-u))f(u)\varphi dx
\end{split}
\end{equation*}
Clearly, we have
$$|I_1|\leq\big(\int_{\R^N}|F(u_n-u)|^tdx\big)^{\frac{1}{t}}\big(\int_{\R^N}|f(u_n-u)-f(u_n)+f(u)|^t|\varphi|^tdx\big)^{\frac{1}{t}}\leq C\xi\|\varphi\|_{\varepsilon}.$$
\begin{equation*}
\begin{split}
|I_2|&\leq(\int_{\R^N}|f(u_n)|^t|\varphi|^tdx)^{\frac{1}{t}}(\int_{\R^N}|F(u_n-u)-F(u_n)+F(u)|^t)^{\frac{1}{t}}\\
&\leq(\int_{\R^N}|f(u_n)|^{t\cdot\frac{l}{l-1}}dx)^{\frac{l-1}{lt}}(\int_{\R^N}|\varphi|^{lt}dx)^{\frac{1}{lt}}\xi\\
&\leq C\xi\|\varphi\|_{\varepsilon}.
\end{split}
\end{equation*}
In similar way, we get $|I_3|\leq C\xi\|\varphi\|_{\varepsilon}.$
Let us observe that,
\begin{equation*}
\begin{split}
&|\int_{\R^N}(\frac{1}{|x|^{\mu}}\ast F(u))f(u_n-u)\varphi dx|\\
\leq&C\int_{\R^N}(\frac{1}{|x|^{\mu}}\ast F(u))(|u_n-u|+|u_n-u|^{q-1})|\varphi|dx\\
\leq&C\int_{\R^N}(\frac{1}{|x|^{\mu}}\ast F(u))|u_n-u||\varphi|dx+C\int_{\R^N}(\frac{1}{|x|^{\mu}}\ast F(u))|u_n-u|^{q-1}|\varphi|dx\\
\leq&C(\int_{\R^N}|\varphi|^2dx)^{\frac{1}{2}}(\int_{\R^N}(\frac{1}{|x|^{\mu}}\ast F(u))^2|u_n-u|^2dx)^{\frac{1}{2}}+C(\int_{\R^N}|\varphi|^{qt}dx)^{\frac{1}{qt}}(\int_{\R^N}(\frac{1}{|x|^{\mu}}\ast F(u))^{\frac{qt}{qt-1}}|u_n-u|^{\frac{(q-1)qt}{qt-1}}dx)^{\frac{qt-1}{qt}}
\end{split}
\end{equation*}
Since $\frac{1}{|x|^{\mu}}\ast F(u)\in L^{\frac{2N}{\mu}}(\R^N),$ we have $(\frac{1}{|x|^{\mu}}\ast F(u))^2\in L^{\frac{N}{\mu}}(\R^N)$ and $(\frac{N}{\mu})'=\frac{N}{N-\mu},$ $\frac{2N}{N-\mu}\in(2,2^*_s).$ so, we have $|u_n-u|^2\in L^{\frac{N}{N-\mu}}(\R^N).$ Since $|u_n-u|^2\rightharpoonup0$ in $L^{\frac{N}{N-\mu}}(\R^N),$ we deduce that $$\int_{\R^N}(\frac{1}{|x|^{\mu}}\ast F(u))^2|u_n-u|^2\rightarrow0.$$
Moreover, we have $\frac{qt}{qt-1}\cdot\frac{qt-1}{qt}\cdot\frac{2N}{\mu}=\frac{2N}{\mu},$ $(\frac{2N(qt-1)}{qt\mu})'=\frac{2N(qt-1)}{(2N-\mu)qt-2N},$ $\frac{(q-1)qt}{qt-1}\cdot\frac{2N(qt-1)}{(2N-\mu)qt-2N}=qt\in(2,2^*_s)$ and $|u_n-u|^{\frac{(q-1)qt}{qt-1}}\rightharpoonup0$ in $L^{\frac{2N(qt-1)}{(2N-\mu)qt-2N}}(\R^N),$ we get $(\frac{1}{|x|^{\mu}}\ast F(u))^{\frac{qt}{qt-1}}\in L^{\frac{2N(qt-1)}{qt\mu}}(\R^N),$  $|u_n-u|^{\frac{(q-1)qt}{qt-1}}\in L^{\frac{2N(qt-1)}{(2N-\mu)qt-2N}}(\R^N)$ and $\int_{\R^N}(\frac{1}{|x|^{\mu}}\ast F(u))^{\frac{qt}{qt-1}}|u_n-u|^{\frac{(q-1)qt}{qt-1}}dx\rightarrow0.$
Hence, we have
$$|\int_{\R^N}(\frac{1}{|x|^{\mu}}\ast F(u))f(u_n-u)\varphi|\leq C\xi\|\varphi\|_{\varepsilon}$$
In similar way, $|\int_{\R^N}(\frac{1}{|x|^{\mu}}\ast F(u_n-u))f(u)\varphi|\leq C\xi\|\varphi\|_{\varepsilon}.$ Therefore $(iv)$ holds.
\end{proof}
\par
By using Brezis-Lieb Lemma \cite{Brezis-Lieb1983PAMS,Gao-Yang18SCI} and Lemma \ref{lem3.8}, we have the following lemma.
\begin{lemma}\label{lem3.9}
Let $\{u_n\}\subset H_{\varepsilon}$ be a $(PS)_d$ sequence of $J_{\varepsilon}$ with $u_n\rightharpoonup u$ in $H_{\varepsilon},$ then
\begin{itemize}
\item [$(i)$] $J_{\varepsilon}(w_n)=J_{\varepsilon}(u_n)-J_{\varepsilon}(u)+o_n(1),$
\item [$(ii)$] $\|J'_{\varepsilon}(w_n)\|=o_n(1).$
\end{itemize}
\end{lemma}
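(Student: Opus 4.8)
The statement is a Brezis--Lieb type splitting along the Cerami sequence $\{u_n\}$, and the plan is to decompose both $J_\varepsilon$ and $J'_\varepsilon$ into their three natural pieces — the quadratic form $\frac12\|\cdot\|_\varepsilon^2$, the Choquard term, and the critical term — and to treat each separately, the Choquard term being handed over essentially for free to Lemma~\ref{lem3.8}. By Lemma~\ref{Lem3.3} the sequence is bounded in $H_\varepsilon$, hence (up to a subsequence) $u_n\rightharpoonup u$ in $H_\varepsilon$, $u_n\to u$ in $L^r_{\rm loc}(\R^N)$ for every $r\in[2,2^*_s)$ and $u_n\to u$ a.e.; consequently $w_n:=u_n-u\rightharpoonup 0$, $w_n\to 0$ a.e., and $w_n^+\to 0$ a.e. Since $f(t)=0$ for $t\le 0$ we have $F(t)=F(t^+)$ and $f(t)=f(t^+)$, so the positive parts occurring in $J_\varepsilon$ and those in the statement of Lemma~\ref{lem3.8} are interchangeable.

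For $(i)$, the quadratic term splits up to a vanishing cross term: expanding $\|u_n\|_\varepsilon^2=\|w_n\|_\varepsilon^2+2(w_n,u)_\varepsilon+\|u\|_\varepsilon^2$ and using $w_n\rightharpoonup 0$ gives $\|u_n\|_\varepsilon^2=\|w_n\|_\varepsilon^2+\|u\|_\varepsilon^2+o_n(1)$. For the critical term I would apply the generalized Brezis--Lieb lemma (\cite{Brezis-Lieb1983PAMS}) to the continuous function $j(t)=(t^+)^{2^*_s}$, which satisfies $j(0)=0$ and $|j(a+b)-j(b)|\le\eta|b|^{2^*_s}+C_\eta|a|^{2^*_s}$ for every $\eta>0$; together with $u_n\to u$ a.e.\ and the $L^{2^*_s}$-boundedness of $\{u_n\}$ this yields $\int_{\R^N}|u_n^+|^{2^*_s}dx=\int_{\R^N}|w_n^+|^{2^*_s}dx+\int_{\R^N}|u^+|^{2^*_s}dx+o_n(1)$. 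Finally Lemma~\ref{lem3.8}$(ii)$ gives directly
$$\int_{\R^N}\big(\tfrac{1}{|x|^{\mu}}\ast F(w_n^+)\big)F(w_n^+)dx=\int_{\R^N}\big(\tfrac{1}{|x|^{\mu}}\ast F(u_n^+)\big)F(u_n^+)dx-\int_{\R^N}\big(\tfrac{1}{|x|^{\mu}}\ast F(u^+)\big)F(u^+)dx+o_n(1).$$
Combining these three identities with the coefficients $\frac12,\ -\frac12,\ -\frac1{2^*_s}$ proves $J_\varepsilon(w_n)=J_\varepsilon(u_n)-J_\varepsilon(u)+o_n(1)$.

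For $(ii)$ I would first observe that $u$ is a weak solution of \eqref{eq2.3}: passing to the limit in $\langle J'_\varepsilon(u_n),\varphi\rangle=o_n(1)$ for fixed $\varphi\in C_0^\infty(\R^N)$ — using the weak convergence in the bilinear form and the local compactness of Lemma~\ref{Lem2.4} in the Choquard and critical terms — gives $\langle J'_\varepsilon(u),\varphi\rangle=0$, hence $J'_\varepsilon(u)=0$ by density of $C_0^\infty(\R^N)$ in $H_\varepsilon$. Now fix $\varphi\in H_\varepsilon$ with $\|\varphi\|_\varepsilon=1$. Since the bilinear part is additive, $(w_n,\varphi)_\varepsilon=(u_n,\varphi)_\varepsilon-(u,\varphi)_\varepsilon$, one has $\langle J'_\varepsilon(w_n),\varphi\rangle=\langle J'_\varepsilon(u_n),\varphi\rangle-\langle J'_\varepsilon(u),\varphi\rangle-A_n(\varphi)-B_n(\varphi)$, where
$$A_n(\varphi)=\int_{\R^N}\big(\tfrac{1}{|x|^{\mu}}\ast F(w_n^+)\big)f(w_n^+)\varphi\,dx-\int_{\R^N}\big(\tfrac{1}{|x|^{\mu}}\ast F(u_n^+)\big)f(u_n^+)\varphi\,dx+\int_{\R^N}\big(\tfrac{1}{|x|^{\mu}}\ast F(u^+)\big)f(u^+)\varphi\,dx,$$
$$B_n(\varphi)=\int_{\R^N}\big(|w_n^+|^{2^*_s-2}w_n^+-|u_n^+|^{2^*_s-2}u_n^++|u^+|^{2^*_s-2}u^+\big)\varphi\,dx.$$
By Lemma~\ref{lem3.8}$(iv)$, for every $\xi>0$ we have $|A_n(\varphi)|\le C\xi$ for all $n$ and all $\varphi$ with $\|\varphi\|_\varepsilon=1$. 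For $B_n$ I would invoke the dual form of Brezis--Lieb: from $u_n\to u$ a.e., the boundedness of $\{u_n^+\}$ in $L^{2^*_s}(\R^N)$, the identity $(2^*_s-1)(2^*_s)'=2^*_s$ and the elementary inequality $|G(a+b)-G(b)|\le\eta|b|^{2^*_s-1}+C_\eta|a|^{2^*_s-1}$ for $G(t)=|t^+|^{2^*_s-2}t^+$ (applied with $a=u$, $b=w_n$), the usual truncation plus dominated-convergence argument yields $|u_n^+|^{2^*_s-2}u_n^+-|w_n^+|^{2^*_s-2}w_n^+\to|u^+|^{2^*_s-2}u^+$ strongly in $L^{(2^*_s)'}(\R^N)$, whence $|B_n(\varphi)|\le o_n(1)\,\|\varphi\|_{L^{2^*_s}}\le C\,o_n(1)$ uniformly in $\varphi$. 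Since $\langle J'_\varepsilon(u_n),\varphi\rangle=o_n(1)$ uniformly over $\|\varphi\|_\varepsilon=1$ (because $J'_\varepsilon(u_n)\to 0$ in the dual of $H_\varepsilon$) and $\langle J'_\varepsilon(u),\varphi\rangle=0$, we conclude $\sup_{\|\varphi\|_\varepsilon=1}|\langle J'_\varepsilon(w_n),\varphi\rangle|\le o_n(1)+C\xi$; letting $n\to\infty$ and then $\xi\to 0$ gives $\|J'_\varepsilon(w_n)\|=o_n(1)$.

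The main obstacle is not the bookkeeping above but the fact that every estimate combining the nonlocal term $\frac{1}{|x|^{\mu}}\ast F(\cdot)$ with the merely continuous $f$ had to be established in advance — this is precisely the content of Lemma~\ref{lem3.8}, whose proof is the genuinely technical part. Granting those estimates, the remaining delicate point is the critical term: since $H_\varepsilon\hookrightarrow L^{2^*_s}(\R^N)$ is not compact, the splitting of $|u_n^+|^{2^*_s}$ and, more importantly, the strong $L^{(2^*_s)'}$-convergence of $|u_n^+|^{2^*_s-2}u_n^+-|w_n^+|^{2^*_s-2}w_n^+$ must be obtained purely from a.e.\ convergence, $L^{2^*_s}$-boundedness and the pointwise inequalities above, and this convergence must be uniform over the unit sphere of $H_\varepsilon$ in order to close the argument for $(ii)$.
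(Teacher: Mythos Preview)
Your proposal is correct and follows essentially the same route as the paper: split each of $J_\varepsilon$ and $J'_\varepsilon$ into the quadratic, Choquard, and critical pieces, handle the quadratic part by weak convergence, invoke Lemma~\ref{lem3.8}$(ii)$ and $(iv)$ for the Choquard pieces, and use the Brezis--Lieb lemma (respectively its dual $L^{(2^*_s)'}$ version) for the critical pieces. Your write-up is in fact more careful than the paper's own proof, particularly in justifying $J'_\varepsilon(u)=0$ and in explaining why the positive parts cause no trouble (via $F(t)=F(t^+)$, $f(t)=f(t^+)$).
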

\begin{proof}
$(i)$ We note that
\begin{equation*}
\begin{split}
&J_{\varepsilon}(u_n-u)-J_{\varepsilon}(u_n)+J_{\varepsilon}(u)\\
=&\frac{1}{2}\big(\|u_n-u\|^2_{\varepsilon}-\|u_n\|_{\varepsilon}^2+\|u\|^2_{\varepsilon})-\frac{1}{2}(\int_{\R^N}(\frac{1}{|x|^{\mu}}\ast F((u_n-u)^+))F((u_n-u)^+)dx\\
&-\int_{\R^N}(\frac{1}{|x|^{\mu}}\ast F(u^+_n))F(u_n^+)dx+\int_{\R^N}(\frac{1}{|x|^{\mu}}\ast F(u))F(u)dx\big)-\frac{1}{2^*_s}\int_{\R^N}(|(u_n-u)^+|^{2^*_s}-|u_n^+|^{2^*_s}+|u^+|^{2^*_s})dx
\end{split}
\end{equation*}
By the Lemma\ref{lem3.8} $(ii)$, $u_n\rightharpoonup u$ in $H_{\varepsilon}$ and Brezis-Lieb Lemma. we have $(i)$ holds.\\
$(ii)$ Recall that $\{u_n\}$ is a $(PS)_d$ sequence of $J_{\varepsilon},$ we have $\|J'_{\varepsilon}(u_n)\|=o_n(1),$ $J'_{\varepsilon}(u)=0.$ For any $\xi>0,$ $n$ large enough, $\forall$ $\varphi\in H_{\varepsilon}$ and $\|\varphi\|_{\varepsilon}=1,$ by the lemma\ref{lem3.8} $(iv)$ we get
\begin{equation*}
\begin{split}
&|\langle J'_{\varepsilon}(u_n-u),\varphi\rangle|\\
=&\big|\langle J'_{\varepsilon}(u_n),\varphi\rangle-\langle J'_{\varepsilon}(u),\varphi\rangle-\int_{\R^N}(\frac{1}{|x|^{\mu}}\ast F((u_n-u)^+))F((u_n-u)^+)\varphi dx-\int_{\R^N}(\frac{1}{|x|^{\mu}}\ast F(u^+_n))f(u_n^+)\varphi dx\\
&-\int_{\R^N}(\frac{1}{|x|^{\mu}}\ast F(u^+))f(u^+)\varphi dx-\int_{\R^N}(|(u_n-u)^+|^{2^*_s-1}-|u_n^+|^{2^*_s-1}+|u^+|^{2^*_s-1})\varphi dx\big|\\
\leq&\|J'_{\varepsilon}(u_n)\|\|\varphi\|_{\varepsilon}+\big|\int_{\R^N}(\frac{1}{|x|^{\mu}}\ast F((u_n-u)^+))f((u_n-u)^+)\varphi dx-\int_{\R^N}(\frac{1}{|x|^{\mu}}\ast F(u_n^+))f(u_n^+)\varphi dx\\
&+\int_{\R^N}(\frac{1}{|x|^{\mu}}\ast F(u^+))f(u^+)\varphi dx\big|+\int_{\R^N}\big||(u_n-u)^+|^{2^*_s-1}-|u_n^+|^{2^*_s-1}+|u^+|^{2^*_s-1}\big||\varphi|dx\\
\leq&\xi\|\varphi\|_{\varepsilon}+C\xi\|\varphi\|_{\varepsilon}+\big(\int_{\R^N}||u_n-u|^{2^*_s-1}-|u_n^+|^{2^*_s-1}+|u^+|^{2^*_s-1}|^{\frac{2^*_s}{2^*_s-1}}dx\big)^{\frac{2^*_s-1}{2^*_s}}\|\varphi\|_{\varepsilon}\\
\leq&\xi\|\varphi\|_{\varepsilon}+C\xi\|\varphi\|_{\varepsilon}+\xi\|\varphi\|_{\varepsilon}
\end{split}
\end{equation*}
This completes the proof of $(ii)$
\end{proof}
\begin{lemma}\label{lem3.10}
$J_{\varepsilon}$ satisfies the $(PS)_d$ condition at any level $d\leq m_{_{V_{\infty}}}.$
\end{lemma}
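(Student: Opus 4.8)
The plan is to run the usual Nehari/splitting scheme, relying on the two dichotomies Lemma~\ref{Lem3.6} and Lemma~\ref{lem3.7} already established, together with the Brezis--Lieb--type splitting Lemma~\ref{lem3.9}.

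First I would take a $(PS)_d$ sequence $\{u_n\}\subset H_\varepsilon$ for $J_\varepsilon$ with $d\le m_{_{V_\infty}}$. By Lemma~\ref{Lem3.3} it is bounded in $H_\varepsilon$ and $\|u_n^-\|_\varepsilon=o_n(1)$; hence, up to a subsequence, $u_n\rightharpoonup u$ in $H_\varepsilon$, $u_n\to u$ in $L^r_{loc}(\R^N)$ for every $r\in[2,2^*_s)$, $u_n\to u$ a.e.\ in $\R^N$, and $u\ge 0$. A standard passage to the limit in $\langle J'_\varepsilon(u_n),\varphi\rangle=o_n(1)$ for $\varphi\in C_0^\infty(\R^N)$ --- using Lemma~\ref{Lem2.4} for the local terms, Lemma~\ref{Lem2.3} together with $(f_1)$--$(f_2)$ for the convolution term, and a.e.\ convergence for the critical term --- shows that $J'_\varepsilon(u)=0$. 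Testing this identity with $u$ and using $F(t)\le\frac2\alpha f(t)t$ for a suitable $\alpha\in(2,2^*_s)$ with $\alpha\le 4$, exactly as in \eqref{eq3.3.2}, gives $J_\varepsilon(u)=J_\varepsilon(u)-\frac1\alpha\langle J'_\varepsilon(u),u\rangle\ge(\frac12-\frac1\alpha)\|u\|_\varepsilon^2\ge 0$; moreover, when $u\ne 0$ one has $u\in\mathcal{N}_\varepsilon$, hence $J_\varepsilon(u)\ge c_\varepsilon>0$ by Lemma~\ref{Lem3.4}.

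Next I would put $w_n:=u_n-u$. By Lemma~\ref{lem3.9}, $J_\varepsilon(w_n)\to d':=d-J_\varepsilon(u)$, $\|J'_\varepsilon(w_n)\|=o_n(1)$, and $w_n\rightharpoonup 0$ in $H_\varepsilon$; by the previous step $d'\le d\le m_{_{V_\infty}}$, with strict inequality $d'<m_{_{V_\infty}}$ whenever $u\ne 0$. If $d'<m_{_{V_\infty}}$, then Lemma~\ref{lem3.7} applied to $\{w_n\}$ yields $w_n\to 0$ in $H_\varepsilon$, i.e.\ $u_n\to u$, which is the assertion. Thus the only remaining case is $u=0$ and $d=m_{_{V_\infty}}$, so that $\{u_n\}$ is itself a $(PS)_{m_{_{V_\infty}}}$ sequence with $u_n\rightharpoonup 0$. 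Since $m_{_{V_\infty}}<\frac{s}{N}S^{\frac{N}{2s}}$ by Lemma~\ref{Lem3.5} with $\tau=V_\infty$, Lemma~\ref{Lem3.6} applies: its alternative $(a)$ would force $u_n\to 0$ and then $d=\lim J_\varepsilon(u_n)=0$, contradicting $m_{_{V_\infty}}>0$; so alternative $(b)$ holds, and because $u_n\rightharpoonup 0$ the concentration centres satisfy $|y_n|\to+\infty$. Passing to the shifts $\tilde u_n:=u_n(\cdot+y_n)$, a subsequence satisfies $\tilde u_n\rightharpoonup\tilde u\ne 0$ in $H^s(\R^N)$ with $\tilde u\ge 0$; using $\liminf_n V(\varepsilon(x+y_n))\ge V_\infty$ (from $(V_0)$), Fatou's lemma, weak lower semicontinuity and the $(PS)$ property, $\tilde u$ is seen to be a nontrivial critical point of a limit problem whose ground-state energy is at least $m_{_{V_\infty}}$, and comparing this with $d=m_{_{V_\infty}}$ along the lines of the proof of Lemma~\ref{lem3.7} produces a contradiction. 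Hence this case does not occur, and $u_n\to u$ in $H_\varepsilon$ in every case.

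The step I expect to be genuinely delicate is the last one: the threshold level $d=m_{_{V_\infty}}$ with vanishing weak limit, where compactness must be recovered by tracking the escaping mass against the fixed potential $V(\varepsilon\,\cdot)$ and running the at-infinity comparison of Lemma~\ref{lem3.7} with sufficient sharpness. All the nonlocal ingredients needed to pass to the limit in the Choquard term and to carry out the splitting are already in place (Lemmas~\ref{Lem2.3}, \ref{lem3.8}, \ref{lem3.9}), so the real work is the concentration-at-infinity bookkeeping and checking that no energy is lost at the critical threshold.
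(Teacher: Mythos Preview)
Your core argument---bound the $(PS)_d$ sequence via Lemma~\ref{Lem3.3}, extract a weak limit $u$ with $J'_\varepsilon(u)=0$ and $J_\varepsilon(u)\ge 0$, then apply the Brezis--Lieb splitting Lemma~\ref{lem3.9} to $w_n=u_n-u$ and feed the resulting $(PS)_{d'}$ sequence with $w_n\rightharpoonup 0$ into Lemma~\ref{lem3.7}---is exactly what the paper does. The paper's proof is in fact shorter than yours: after obtaining $d-J_\varepsilon(u)\le d\le m_{V_\infty}<\frac{s}{N}S^{N/2s}$ (the last inequality from Lemma~\ref{Lem3.5}), it simply invokes Lemma~\ref{lem3.7} on $\{w_n\}$ and concludes.

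The difference is that you flag, and attempt to treat, the borderline case $u=0$, $d=m_{V_\infty}$, where $d'=m_{V_\infty}$ is not strictly below the threshold required by Lemma~\ref{lem3.7}. The paper's proof glosses over this: it applies Lemma~\ref{lem3.7} as if $d'\le m_{V_\infty}$ sufficed, whereas that lemma is stated for $d<m_{V_\infty}$. In practice the paper only ever uses compactness at levels strictly below $m_{V_\infty}$ (see the proof of Theorem~\ref{Thm1.1}, where $c_\varepsilon<m_{V_\infty}$, and Lemma~\ref{lem3.11}, which is stated with strict inequality), so the discrepancy is harmless for the applications. Your extra concentration-at-infinity sketch for the equality case is a reasonable plan, but as written it is not a complete argument: turning the comparison ``$\tilde u$ is a nontrivial critical point of a limit problem with energy $\ge m_{V_\infty}$'' into an actual contradiction with $d=m_{V_\infty}$ requires controlling the full energy of $u_n$ against that of $\tilde u$ (not just a lower bound), and this is precisely where strictness is usually needed. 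If you want the lemma at the closed level $d\le m_{V_\infty}$, that step needs to be carried out in detail; otherwise, matching the paper, it is cleaner to state and prove the result for $d<m_{V_\infty}$.
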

\begin{proof}
Let $u_n\subset H_{\varepsilon}$ be a $(PS)_d$ sequence of $J_{\varepsilon}.$ Then, by Lemma \ref{Lem3.3} we know that $\{u_n\}$ is bounded in $H_{\varepsilon}$ and we can assume $u_n\geq0.$ Hence, up to a subsequence, there is $u\in H_{\varepsilon}$ such that $u_n\rightharpoonup u\geq0$ in $H_{\varepsilon},$ $u_n\rightarrow u$ in $L^r_{loc}(\R^N)$ for each $r\in[2,2^*_s),$ $u_n(x)\rightarrow u(x)$ $a.e.$ in $\R^N$ and $J'_{\varepsilon}(u)=0.$ Set $w_n=u_n-u,$ by Lemma \ref{lem3.8} we have $$J_{\varepsilon}(w_n)=J_{\varepsilon}(u_n)-J_{\varepsilon}(u)+o_n(1)=d-J_{\varepsilon}(u)+o_n(1) \,\,\ \text{ and } \,\,\ J'_{\varepsilon}(w_n)=o_n(1).$$ Moreover, for any $\alpha\in(2,2^*_s)$ and $\alpha\leq4,$ we have
\begin{equation*}
\begin{split}
J_{\varepsilon}(u)&=J_{\varepsilon}(u)-\frac{1}{\alpha}\langle J'_{\varepsilon(u),u}\rangle\\
&=(\frac{1}{2}-\frac{1}{\alpha})\|u_n\|^2_{\varepsilon}+\frac{1}{\alpha}\int_{\R^N}(\frac{1}{|x|^{\mu}}\ast F(u_n))f(u_n)u_ndx-\frac{1}{2}\int_{\R^N}(\frac{1}{|x|^{\mu}}\ast F(u_n))F(u_n)dx-\frac{1}{2^*_s}\int_{\R^N}|u_n|^{2^*_s}dx\\
& \,\,\, \,\,\ +\frac{1}{\alpha}\int_{\R^N}|u_n|^{2^*_s-2}u_n^2dx\\
&\geq0.
\end{split}
\end{equation*}
By Lemma \ref{Lem3.5} we have $d-J_{\varepsilon}(u)\leq d\leq m_{V_{\infty}}<\frac{s}{N}S^{\frac{N}{2s}}$ and by Lemma \ref{lem3.7} we know $u_n\rightarrow u$ in $H_{\varepsilon}.$ Hence, the Lemma is proved.
\end{proof}
\\
By Lemma \ref{lem3.3.1} and Lemma \ref{lem3.10} we have the following lemma.
\begin{lemma}\label{lem3.11}
$J_{\varepsilon}|_{\mathcal{N}_{\varepsilon}}$ satisfies the $(PS)_d$ condition at any level $d<m_{V_\infty}.$
\end{lemma}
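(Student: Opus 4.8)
The plan is to transfer the compactness of the unconstrained functional $J_\varepsilon$ on $H_\varepsilon$ to its restriction to the Nehari manifold, exploiting the homeomorphism $n_\varepsilon\colon\mathcal{S}_\varepsilon\to\mathcal{N}_\varepsilon$ together with the abstract framework collected in Lemma \ref{lem3.3.1}. So let $\{u_n\}\subset\mathcal{N}_\varepsilon$ be a $(PS)_d$ sequence for $J_\varepsilon|_{\mathcal{N}_\varepsilon}$ with $d<m_{V_\infty}$. First I would check that $\{u_n\}$ is bounded in $H_\varepsilon$: choosing $\alpha\in(2,2^*_s)$ with $\alpha\le4$, the computation in \eqref{eq3.3.2} gives $J_\varepsilon(u_n)\ge(\tfrac12-\tfrac1\alpha)\|u_n\|_\varepsilon^2$, so that $\|u_n\|_\varepsilon^2\le(d+o_n(1))/(\tfrac12-\tfrac1\alpha)$; at the same time Lemma \ref{Lem3.4} provides the uniform lower bound $\|u_n\|_\varepsilon\ge r>0$.

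Next I would set $v_n:=n_\varepsilon^{-1}(u_n)=u_n/\|u_n\|_\varepsilon\in\mathcal{S}_\varepsilon$. Since $\{u_n\}\subset\mathcal{N}_\varepsilon$ is a bounded $(PS)_d$ sequence for $J_\varepsilon|_{\mathcal{N}_\varepsilon}$, Lemma \ref{lem3.3.1}(d) ensures that $\{v_n\}$ is a $(PS)_d$ sequence for $\psi_\varepsilon$ on $\mathcal{S}_\varepsilon$; applying the other implication in Lemma \ref{lem3.3.1}(d), the sequence $\{n_\varepsilon(v_n)\}=\{u_n\}$ is then a $(PS)_d$ sequence for $J_\varepsilon$ on the whole space $H_\varepsilon$. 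In effect the constrained Palais-Smale condition has been reduced to the free one, the point being that the Lagrange multiplier attached to $\mathcal{N}_\varepsilon$ vanishes along the sequence.

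Now I would invoke Lemma \ref{lem3.10}: since $d<m_{V_\infty}$, in particular $d\le m_{V_\infty}$, so $J_\varepsilon$ satisfies the $(PS)_d$ condition on $H_\varepsilon$, and up to a subsequence $u_n\to u$ strongly in $H_\varepsilon$. It remains to see that $u\in\mathcal{N}_\varepsilon$, so that the convergence really takes place in $\mathcal{N}_\varepsilon$. Passing to the limit in the identity $\|u_n\|_\varepsilon^2=\int_{\R^N}\big(\tfrac1{|x|^{\mu}}\ast F(u_n^+)\big)f(u_n^+)u_n^+\,dx+\int_{\R^N}|u_n^+|^{2^*_s}\,dx$ and using $\|u\|_\varepsilon\ge r>0$ from Lemma \ref{Lem3.4}, one gets $u^+\not\equiv0$; moreover $\langle J_\varepsilon'(u),u\rangle=0$ follows from the strong convergence and the continuity of $J_\varepsilon'$. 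Hence $u\in\mathcal{N}_\varepsilon$, and $J_\varepsilon|_{\mathcal{N}_\varepsilon}$ satisfies the $(PS)_d$ condition at every level $d<m_{V_\infty}$.

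I expect this argument to be short and essentially formal; the real weight lies one layer below, in Lemma \ref{lem3.10}, whose proof is where the critical exponent is controlled, through the Brezis-Lieb type splitting for the Choquard nonlinearity with a merely continuous $f$ (Lemmas \ref{lem3.8} and \ref{lem3.9}) and the vanishing/non-vanishing dichotomy of Lemma \ref{lem3.7}. Granting those inputs, the only subtlety in the present proof is making sure that the strong limit does not slip onto the boundary $\{u^+=0\}$ of $X^+_\varepsilon$, which is precisely what the uniform bound $\|u\|_\varepsilon\ge r$ of Lemma \ref{Lem3.4} prevents.
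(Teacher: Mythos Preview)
Your proposal is correct and follows essentially the same approach as the paper, which simply states that the lemma follows from Lemma~\ref{lem3.3.1} and Lemma~\ref{lem3.10}; you have just spelled out the standard passage through $\psi_\varepsilon$ via $n_\varepsilon$ and the check that the strong limit remains on $\mathcal{N}_\varepsilon$.
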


$\mathbf{Proof\ of\ Theorem~\ref{Thm1.1}}.$
By Lemma \ref{Lem3.1} we know that functional $J_{\varepsilon}$ satisfies the mountain pass geometry, then using a version of the mountain pass theorem, there exists a sequence $\{u_n\}\subset H_{\varepsilon}$ such that
$$\lim\limits_{n\rightarrow\infty}J_{\varepsilon}=c_{\varepsilon} \,\,\ \text{ and } \,\,\ (1+\|u_n\|_{\varepsilon})\|J'_{\varepsilon}\|=o_n(1).$$
For any $\tau\in\R$ with $V_0<\tau<V_{\infty},$ we have $m_{V_0}<m_{\tau}<m_{V_{\infty}}.$ By Lemma \ref{Lem3.5}, $m_{\tau}<\frac{s}{N}S^{\frac{N}{2s}}.$ Apply Lemma \ref{Lem3.3}, Lemma \ref{lem3.10} and Theorem 6.3.4 in \cite{Zhongbook}, we obtain that $m_{\tau}$ is a critical value of $I_{\tau}$ with corresponding nontrivial nonnegative critical point $u\in H_{\varepsilon}(\R^N).$ For any $r>0,$ take $\eta_{r}\in C^{\infty}_0(\R^N,[0,1])$ be such that $$\eta_r=1 \text{ if } |x|<r \,\,\ \text{ and } \,\,\ \eta_r=0 \text{ if } |x|>2r.$$ Set $u_r:=\eta_ru,$ it is easy to verify that $u_r\in H_{\varepsilon}(\R^N)$ for each $r>0.$ By Lemma \ref{Lem3.2} there exists $t_r>0$ such that $\tilde{u}_r:=t_ru_r\in\mathcal{M}_{\tau}.$ Consequently, there is $r_0>0$ such that $\tilde{u}:=\tilde{u}_{r_0}$ satisfies $I_{\tau}(\tilde{u})<m_{V_{\infty}}.$ In fact, if this is false, then $I_{\tau}(\tilde{u}_r)=I_{\tau}(t_ru_r)\geq m_{V_{\infty}}$ for all $r>0.$ Notice that $u_r\rightarrow u$ in $H_{\varepsilon}(\R^N)$ as $r\rightarrow+\infty$ and $u\in\mathcal{M}_{\tau}.$ we can deduce that $t_r\rightarrow1$ as $r\rightarrow+\infty.$ Hence,$$m_{_{V_{\infty}}}\leq\liminf\limits_{r\rightarrow+\infty}I_{\tau}(t_ru_r)=I_{\tau}(u)=m_{\tau}<m_{_{V_{\infty}}},$$
which gives a contradiction, then $I_{\tau}(\tilde{u})<m_{_{V_{\infty}}}.$ The invariance by translation, we may assume $V_0=V(0)<\tau$ and $supp(\tilde{u})$ is compact. We use the continuity of $V,$ there is an $\varepsilon^*>0$ such that $$V(\varepsilon x)<\tau, \,\,\ \forall \varepsilon\in(0,\varepsilon^*) \,\,\ \text{and} \,\,\ x\in supp(\tilde{u}).$$
Hence, $$J_{\varepsilon}(t\tilde{u})\leq I_{\tau}(t\tilde{u}), \,\,\ \forall\varepsilon\in(0,\varepsilon^*) \,\,\ \text{and} \,\,\ t\geq0,$$
and $$\max\limits_{t\geq0}J_{\varepsilon}(t\tilde{u})\leq\max\limits_{t\geq0}I_{\tau}(t\tilde{u})=I_{\tau}(\tilde{u})<m_{_{V_{\infty}}}, \,\,\ \forall \,\,\ \varepsilon\in(0,\varepsilon*).$$
Consequently, $$c_{\varepsilon}<m_{_{V_{\infty}}}, \,\,\ \forall \,\,\ \varepsilon\in(0,\varepsilon^*).$$
Lemma \ref{lem3.10} guarantees up to a subsequence such that $u_n\rightarrow u$ in $H_{\varepsilon},$ then $J'_{\varepsilon}(u)=0$ and $J_{\varepsilon}(u)=c_{\varepsilon}.$ Hence $u$ is a ground nontrivial nonnegative solution of \eqref{eq2.3}. This completes the proof of Theorem \ref{Thm1.1}.
\raisebox{-0.5mm}{\rule{1.5mm}{1mm}}\vspace{6pt}
\section{Multiplicity Results}
\subsection{Technical results}
In this section we focus our attention on the study of the multiplicity of solutions to \eqref{eq1.1}. Since $V_0>0$, by Lemma \ref{Lem3.5}, $m_{_{V_0}}<\frac{s}{N}S^{\frac{N}{2s}}.$ From the proof of Theorem \ref{Thm1.1} we know that $m_{_{V_0}}$ is a critical value of $I_{V_0}$ with corresponding nontrivial nonnegative critical point $w\in H^s(\R^N).$ Fix $\delta>0$ and let $\eta\in C^{\infty}(\R^+,[0,1])$ be a function such that $\eta(t)=1$ if $0\leq t\leq\frac{\delta}{2}$ and $\eta(t)=0$ if $t\geq\delta.$ For any $y\in\Lambda,$ we define
$$\Psi_{\varepsilon,y}(x)=\eta(|\varepsilon x-y|)w(\frac{\varepsilon x-y}{\varepsilon}),\forall x\in\R^N.$$
Then for small $\varepsilon>0$, one has $\Psi_{\varepsilon,y}\in H_\varepsilon\backslash\{0\}$ for all $y\in \Lambda$. In fact, using the change of variable $z=x-\frac{y}{\varepsilon}$, one has
\begin{equation*}
\begin{split}
\int_{\mathbb{R}^N}V(\varepsilon x)\Psi_{\varepsilon,y}^2(x)dx&=\int_{\mathbb{R}^N}V(\varepsilon x)\eta^2(|\varepsilon x-y|)w^2(\frac{\varepsilon x-y}{\varepsilon})dx=\int_{\mathbb{R}^N}V(\varepsilon z+y)\eta^2(|\varepsilon z|)w^2(z)dz\\
&\leq C\int_{\mathbb{R}^N}w^2(z)dz<+\infty.
\end{split}
\end{equation*}
Moreover, using the change of variable $x^\prime=x-\frac{y}{\varepsilon},z^\prime=z-\frac{y}{\varepsilon}$, we have
\begin{equation*}
\begin{split}
\|(-\Delta)^{\frac{s}{2}}\Psi_{\varepsilon,y}\|_{L^2(\R^N)}^2&=\frac{1}{2}C(s)\iint_{\mathbb{R}^N\times \mathbb{R}^N}\frac{\big|\eta(|\varepsilon x-y|)w(\frac{\varepsilon x-y}{\varepsilon})-\eta(|\varepsilon z-y|)w(\frac{\varepsilon z-y}{\varepsilon})\big|^2}{|x-z|^{N+2s}}dxdz\\
&=\frac{1}{2}C(s)\iint_{\mathbb{R}^N\times \mathbb{R}^N}\frac{\big|\eta(|\varepsilon x^\prime|)w(x^\prime)-\eta(|\varepsilon z^\prime|)w(z^\prime)\big|^2}{|x^\prime-z^\prime|^{N+2s}}dx^\prime dz^\prime\\
&=\|(-\Delta)^{\frac{s}{2}}\eta(|\varepsilon x|)w(x)\|_{L^2(\R^N)}^2=\|(-\Delta)^{\frac{s}{2}}\eta_\varepsilon w\|_{L^2(\R^N)}^2,
\end{split}
\end{equation*}
where $\eta_\varepsilon(x)=\eta(|\varepsilon x|)$. By Lemma \ref{Lem2.4}, we see that $\eta_\varepsilon w\in \mathcal{D}^{s,2}(\mathbb{R}^N)$ as $\varepsilon\rightarrow 0$, and hence $\Psi_{\varepsilon,y}\in \mathcal{D}^{s,2}(\mathbb{R}^N)$ for $\varepsilon>0$ small. Hence $\Psi_{\varepsilon,y}\in H_\varepsilon$. Now we proof $\Psi_{\varepsilon,y}\neq 0$. In fact,
\begin{equation*}
\begin{split}
\int_{\mathbb{R}^N}\Psi_{\varepsilon,y}^2(x)dx&=\int_{\mathbb{R}^N}\eta^2(|\varepsilon x-y|)w^2(\frac{\varepsilon x-y}{\varepsilon})dx=\int_{|\varepsilon x-y|<\delta}\eta^2(|\varepsilon x-y|)w^2(\frac{\varepsilon x-y}{\varepsilon})dx\\
&\geq\int_{|z|\leq\frac{\delta}{2\varepsilon}}\eta^2(|\varepsilon z|)w^2(z)dz\geq\int_{B_0(\frac{\delta}{2\varepsilon})}w^2(z)dz\rightarrow \int_{\mathbb{R}^N}w^2(z)dz>0
\end{split}
\end{equation*}
as $\varepsilon\rightarrow 0$. Then $\Psi_{\varepsilon,y}\neq 0$ for small $\varepsilon>0$. Therefore, there exists unique $t_\varepsilon>0$ such that
\begin{equation*}
\max_{t\geq 0}I_\varepsilon(t \Psi_{\varepsilon,y})=I_\varepsilon(t_\varepsilon \Psi_{\varepsilon,y})\ \text{and}\ t_\varepsilon\Psi_{\varepsilon,y}\in\mathcal{N}_\varepsilon.
\end{equation*}
We introduce the map $\Phi_\varepsilon:\Lambda\rightarrow \mathcal{N}_\varepsilon$ by setting
\begin{equation*}
\Phi_\varepsilon(y)=t_\varepsilon \Psi_{\varepsilon,y}.
\end{equation*}
By construction, $\Phi_\varepsilon(y)$ has a compact support for any $y\in \Lambda$ and $\Phi_\varepsilon$ is a continuous map.

\begin{lemma}\label{lem4.1}
$$\lim\limits_{\varepsilon\rightarrow0}J_{\varepsilon}(\Phi_{\varepsilon}(y))=m_{_{V_0}} \,\,\ \text{ uniformly in } \,\,\ y\in\Lambda.$$
\end{lemma}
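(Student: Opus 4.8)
The plan is to show that $\Phi_\varepsilon(y) = t_\varepsilon \Psi_{\varepsilon,y}$ satisfies $J_\varepsilon(\Phi_\varepsilon(y)) \to m_{V_0}$ by first pinning down the behaviour of the scaling parameters $t_\varepsilon$ and then passing to the limit in $J_\varepsilon$ after an appropriate change of variables. First I would argue by contradiction: suppose the conclusion fails, so there exist $\delta_0 > 0$, a sequence $\varepsilon_n \to 0$ and points $y_n \in \Lambda$ with $|J_{\varepsilon_n}(\Phi_{\varepsilon_n}(y_n)) - m_{V_0}| \geq \delta_0$. Since $\Lambda$ is bounded (it is contained in the sublevel set where $V$ attains its minimum, and $(V_0)$ forces this set to be bounded), up to a subsequence $y_n \to y_0 \in \overline{\Lambda}$, with $V(y_0) = V_0$ by continuity of $V$. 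Using the change of variable $z = x - y_n/\varepsilon_n$ as in the computation preceding the lemma, one checks that $\|\Psi_{\varepsilon_n, y_n}\|_{\varepsilon_n}^2 \to \|w\|_{H^s}^2$ with the potential term converging to $V_0 \int_{\R^N} w^2\,dz$ (dominated convergence, using $V(\varepsilon_n z + y_n) \to V_0$ pointwise and $\eta_{\varepsilon_n} \to 1$), the Gagliardo seminorm converging to $[w]_{H^s}^2$ by Lemma \ref{Lem2.5}, and the critical and convolution terms converging to the corresponding quantities for $w$.

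Next I would establish that $t_{\varepsilon_n} \to 1$. Since $t_{\varepsilon_n}\Psi_{\varepsilon_n,y_n} \in \mathcal{N}_{\varepsilon_n}$, the defining identity $\langle J'_{\varepsilon_n}(t_{\varepsilon_n}\Psi_{\varepsilon_n,y_n}), t_{\varepsilon_n}\Psi_{\varepsilon_n,y_n}\rangle = 0$ reads, after dividing by $t_{\varepsilon_n}^2$,
\begin{equation*}
\|\Psi_{\varepsilon_n,y_n}\|_{\varepsilon_n}^2 = \int_{\R^N}\Big(\frac{1}{|x|^\mu}\ast \frac{F(t_{\varepsilon_n}\Psi_{\varepsilon_n,y_n}^+)}{t_{\varepsilon_n}}\Big)f(t_{\varepsilon_n}\Psi_{\varepsilon_n,y_n}^+)\Psi_{\varepsilon_n,y_n}^+\,dx + t_{\varepsilon_n}^{2^*_s-2}\int_{\R^N}|\Psi_{\varepsilon_n,y_n}^+|^{2^*_s}\,dx.
\end{equation*}
By Lemma \ref{Lem3.4} applied after rescaling (or directly via \eqref{eq3.3.1}) the sequence $t_{\varepsilon_n}$ is bounded below by a positive constant, and a Moser-type argument using $(f_3)$ gives that the right-hand side is strictly increasing in $t_{\varepsilon_n}$ with superquadratic growth, which combined with the boundedness of $\|\Psi_{\varepsilon_n,y_n}\|_{\varepsilon_n}$ yields that $\{t_{\varepsilon_n}\}$ is also bounded above. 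Then, passing to a further subsequence $t_{\varepsilon_n} \to t_0 \geq \kappa > 0$, the limit identity becomes exactly the Nehari condition $\langle I'_{V_0}(t_0 w), t_0 w\rangle = 0$; since $w \in \mathcal{M}_{V_0}$ already, the uniqueness in Lemma \ref{Lem3.2}(i) (transported to $I_{V_0}$) forces $t_0 = 1$.

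Finally, with $t_{\varepsilon_n} \to 1$ and the convergence of each term of $J_{\varepsilon_n}(t_{\varepsilon_n}\Psi_{\varepsilon_n,y_n})$ to the corresponding term of $I_{V_0}(w)$ — using the change of variables, Lemma \ref{Lem2.5} for the Gagliardo seminorm, dominated convergence for the potential integral, and the Brezis--Lieb / Hardy--Littlewood--Sobolev machinery together with continuity of $f, F$ for the nonlinear and critical pieces — one concludes
\begin{equation*}
J_{\varepsilon_n}(\Phi_{\varepsilon_n}(y_n)) \longrightarrow I_{V_0}(w) = m_{V_0},
\end{equation*}
contradicting $|J_{\varepsilon_n}(\Phi_{\varepsilon_n}(y_n)) - m_{V_0}| \geq \delta_0$. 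I expect the main obstacle to be the careful justification of the limit for the nonlocal convolution term $\int_{\R^N}(\frac{1}{|x|^\mu}\ast F(t_{\varepsilon_n}\Psi_{\varepsilon_n,y_n}^+))F(t_{\varepsilon_n}\Psi_{\varepsilon_n,y_n}^+)\,dx$: one must control the tails of $\Psi_{\varepsilon_n,y_n}$ (where the cutoff $\eta_\varepsilon$ differs from $1$) uniformly, invoke the growth bounds \eqref{eq3.1} together with Lemma \ref{Lem2.3} to get equi-integrability of $F(t_{\varepsilon_n}\Psi_{\varepsilon_n,y_n}^+)$ in $L^{2N/(2N-\mu)}$, and then pass to the limit, which is where the appearance of the fractional Laplacian and the double nonlocality make the estimates delicate.
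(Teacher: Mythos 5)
Your proposal is correct and follows essentially the same route as the paper's proof: argue by contradiction to reduce to a sequence $\varepsilon_n\to0$, $y_n\in\Lambda$; use the change of variables and Lemma \ref{Lem2.5} together with dominated convergence to show $\|\Psi_{\varepsilon_n,y_n}\|_{\varepsilon_n}^2\to\|w\|_{V_0}^2$ and the other integrals converge; bound $t_{\varepsilon_n}$ above and below (the paper does the upper bound by extracting the critical term $t_{\varepsilon_n}^{2^*_s-2}\int|w|^{2^*_s}$, which is the clean way to realize the ``superquadratic growth'' you gesture at); and identify $t_0=1$ via the Nehari uniqueness. The only minor divergence is cosmetic: you explicitly invoke $y_n\to y_0\in\Lambda$ and compactness of $\Lambda$, which the paper leaves implicit (what is actually used is the uniform continuity of $V$ near $\Lambda$ together with $V|_\Lambda\equiv V_0$), and the phrase ``Moser-type argument'' for the upper bound on $t_{\varepsilon_n}$ is a misnomer — Moser iteration appears only later, in the $L^\infty$ estimate of Lemma \ref{Lem4.10}; here the relevant input is simply the polynomial growth of the critical term.
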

\begin{proof}
Assume by contradiction, then there exists $\delta_0>0,$ $\{y_n\}\subset\Lambda$ and $\varepsilon_n>0$ with $\varepsilon_n\rightarrow0$ such that
\begin{equation}\label{eq4.1}
|J_{\varepsilon_n}(\Phi_{\varepsilon_n}(y_n))-m_{_{V_0}}|\geq\delta_0.
\end{equation}
By using $\Phi_{\varepsilon_n}\in\mathcal{N}_{\varepsilon_n}$ and Lemma \ref{Lem3.4} we know that there is a $r_0>0$ such that
\begin{equation}\label{eq4.2}
\begin{split}
&\int_{\R^N}(\frac{1}{|x|^{\mu}}\ast F(\Phi_{\varepsilon_n}(y_n)))f(\Phi_{\varepsilon_n}(y_n))\Phi_{\varepsilon_n}(y_n)dx+\int_{\R^N}|\Phi_{\varepsilon_n}(y_n)|^{2^*_s}dx\\
=&\|\Phi_{\varepsilon_n}(y_n)\|^2_{\varepsilon_n}\\
\geq& r_0
\end{split}
\end{equation}
which implies that $t_{\varepsilon}\nrightarrow0.$ Hence there exists a $T>0$ such that $t_{\varepsilon_n}\geq T.$ If $t_{\varepsilon_n}\rightarrow\infty,$ we have
\begin{equation*}
\begin{split}
C\|w\|^2_{\varepsilon}&\geq\int_{\R^N}|(-\Delta)^{\frac{1}{2}}\Psi_{\varepsilon_n,y_n}|^2dx+\int_{\R^N}V(\varepsilon_nx)\Psi_{\varepsilon_n,y_n}^2dx\\
&=t^{-2}_{\varepsilon_n}\int_{\R^N}(\frac{1}{|x|^{\mu}}\ast F(\Phi_{\varepsilon_n}(y_n)))f(\Phi_{\varepsilon_n}(y_n))\Phi_{\varepsilon_n}(y_n)dx+t^{-2}_{\varepsilon_n}\int_{\R^N}|\Phi_{\varepsilon_n}(y_n)|^{2^*_s}dx\\
&\geq t^{-2}_{\varepsilon_n}\int_{\R^N}|t_{\varepsilon_n}\Psi_{\varepsilon_n,y_n}|^{2^*_s}dx\\
&\geq t^{-2}_{\varepsilon_n}\int_{\R^N}|t_{\varepsilon_n}\eta(|\varepsilon_nx|)w(x)|^{2^*_s}dx\\
&\geq t_{\varepsilon_n}^{-2}\int_{|x|<\frac{\delta}{2\varepsilon_n}}|t_{\varepsilon_n}w(x)|^{2^*_s}dx\\
&\geq t_{\varepsilon_n}^{2^*_s-2}\int_{\frac{\delta}{2}<|x|<\delta}|w(x)|^{2^*_s}dx\\
&\rightarrow+\infty
\end{split}
\end{equation*}
for large $n.$ This yield a contradiction, then $t_{\varepsilon}\rightarrow t_0>0.$ Now we claim that $t_0\rightarrow1.$ By using Lebesgue's theorem, we can verify that $$\lim\limits_{n\rightarrow\infty}\|\Phi_{\varepsilon_n}(y_n)\|^2_{\varepsilon}=t_0^2\|w\|^2_{V_0},$$
$$\lim\limits_{n\rightarrow\infty}\int_{\R^N}(\frac{1}{|x|^{\mu}}\ast F(\Phi_{\varepsilon_n}(y_n)))f(\Phi_{\varepsilon_n}(y_n))\Phi_{\varepsilon_n}(y_n)dx=\int_{\R^N}(\frac{1}{|x|^{\mu}}\ast F(t_0w))f(t_0w)t_0wdx,$$
and $$\lim\limits_{n\rightarrow\infty}\int_{\R^N}|\Phi_{\varepsilon_n}(y_n)|^{2^*_s}dx=\int_ {\R^N}|t_0w|^{2^*_s}dx.$$
Therefore, from \eqref{eq4.2}, we get
$$t_0^2\|w\|^2_{V_0}=\int_{\R^N}(\frac{1}{|x|^{\mu}}\ast F(t_0w))f(t_0w)t_0wdx+\int_{\R^N}|t_0w|^{2^*_s}dx.$$
This show $t_0w\in\mathcal{M}_{V_0}.$ Noting that $w\in\mathcal{M}_{V_0},$ we see $t_0=1,$ so claim is proved. Moreover, similar to the above arguments, we can get
$$\lim\limits_{n\rightarrow\infty}J_{\varepsilon_n}(\Phi_{\varepsilon_n}(y_n))=I_{V_0}(w)=m_{_{V_0}}$$
which contradicts to \eqref{eq4.1}. This completes the proof.
\end{proof}
\par
Now, we are ready to introduce the barycenter map. For any $\delta>0,$ let $\rho=\rho(\delta)>0$ such that $\Lambda_{\delta}\subset B_{\rho}(0).$ Define $\Upsilon:\R^N\rightarrow\R^N$ as follow:
\begin{align*}
\Upsilon(x)=\left\{ \begin{array}{ll}
x \,\,\ \,\,\ \,\,\ \text{if} \,\,\ |x|<\rho\\
\frac{\rho x}{|x|} \,\,\ \text{if} \,\,\ |x|\geq\rho\\
\end{array} \right.
\end{align*}
We define the barycenter map $\beta_{\varepsilon}:\mathcal{N}_{\varepsilon}\rightarrow\R^N$ as follows
$$\beta_{\varepsilon}=\frac{\int_{\R^N}\Upsilon(\varepsilon x)|w(x)|^2dx}{\int_{\R^N}|w(x)|^2dx}.$$
\begin{lemma}\label{lem4.2}
$$\lim\limits_{\varepsilon\rightarrow0}\beta_{\varepsilon}(\Phi_{\varepsilon}(y))=y \,\,\ \text{ uniformly in } \,\,\ y\in\Lambda.$$
\end{lemma}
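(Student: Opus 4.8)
The plan is to reduce the claim to a routine application of the dominated convergence theorem, together with a compactness argument that upgrades pointwise convergence to uniformity in $y$. First I would note that $\beta_\varepsilon$ is invariant under multiplication of its argument by a positive scalar, so the factor $t_\varepsilon$ in $\Phi_\varepsilon(y)=t_\varepsilon\Psi_{\varepsilon,y}$ cancels and
$$\beta_\varepsilon(\Phi_\varepsilon(y))=\frac{\int_{\R^N}\Upsilon(\varepsilon x)\,\Psi_{\varepsilon,y}^2(x)\,dx}{\int_{\R^N}\Psi_{\varepsilon,y}^2(x)\,dx}.$$
Using the same change of variable $z=x-\frac{y}{\varepsilon}$ as in the construction of $\Psi_{\varepsilon,y}$ (so that $\varepsilon x-y=\varepsilon z$ and $\Psi_{\varepsilon,y}(x)=\eta(|\varepsilon z|)w(z)$), this becomes
$$\beta_\varepsilon(\Phi_\varepsilon(y))-y=\frac{\int_{\R^N}\big(\Upsilon(\varepsilon z+y)-y\big)\,\eta^2(|\varepsilon z|)\,w^2(z)\,dz}{\int_{\R^N}\eta^2(|\varepsilon z|)\,w^2(z)\,dz},$$
so it suffices to prove that the numerator tends to $0$ and the denominator stays bounded away from $0$, both uniformly in $y\in\Lambda$.

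The denominator is easy: since $0\le\eta^2(|\varepsilon z|)w^2(z)\le w^2(z)\in L^1(\R^N)$ and $\eta^2(|\varepsilon z|)\to\eta^2(0)=1$ pointwise, dominated convergence gives $\int_{\R^N}\eta^2(|\varepsilon z|)w^2(z)\,dz\to\|w\|_2^2>0$, uniformly in $y$ because the expression does not depend on $y$. For the numerator, the integrand is dominated by $2\rho\,w^2(z)\in L^1(\R^N)$ (using $|\Upsilon|\le\rho$ on $\R^N$ and $|y|\le\rho$ for $y\in\Lambda\subset\Lambda_\delta\subset B_\rho(0)$), and for each fixed $z$ the point $\varepsilon z+y$ converges to $y$, which lies in the open ball $B_\rho(0)$ where $\Upsilon$ is the identity, so the integrand converges to $0$ a.e.; hence for each fixed $y$ the numerator tends to $0$. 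To get this uniformly in $y$, I would argue by contradiction: if it failed there would exist $\delta_0>0$, $\varepsilon_n\to0^+$ and $y_n\in\Lambda$ with $|\beta_{\varepsilon_n}(\Phi_{\varepsilon_n}(y_n))-y_n|\ge\delta_0$; since $(V_0)$ together with the continuity of $V$ makes $\Lambda=\{x:V(x)=V_0\}$ compact, along a subsequence $y_n\to y_0\in\Lambda\subset B_\rho(0)$, and then running the same dominated convergence argument (now with $\varepsilon_n z+y_n\to y_0$ and $\Upsilon$ continuous and equal to the identity in a neighbourhood of $y_0$) gives $\beta_{\varepsilon_n}(\Phi_{\varepsilon_n}(y_n))\to y_0$; combined with $y_n\to y_0$ this forces $|\beta_{\varepsilon_n}(\Phi_{\varepsilon_n}(y_n))-y_n|\to0$, a contradiction.

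I do not anticipate a serious obstacle here. The only point requiring a little care is that the truncation $\Upsilon$ should not distort the limit, and this is precisely the reason $\rho$ was chosen with $\Lambda_\delta\subset B_\rho(0)$: any limit point $y_0$ of a sequence in $\Lambda$ then sits strictly inside the ball where $\Upsilon$ is the identity, so $\Upsilon(\varepsilon_n z+y_n)\to y_0$. The passage from the pointwise/dominated-convergence statement to the uniform one rests entirely on the compactness of $\Lambda$, and the homogeneity of $\beta_\varepsilon$ is what keeps the unknown scaling constant $t_\varepsilon$ from ever entering the estimate.
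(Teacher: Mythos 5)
Your argument is correct and follows essentially the same route as the paper: rewrite $\beta_\varepsilon(\Phi_\varepsilon(y))-y$ via the change of variables $z=x-y/\varepsilon$, note the denominator converges to $\|w\|_{L^2}^2>0$, and apply dominated convergence to the numerator inside a contradiction argument. The one small remark worth making is that extracting a convergent subsequence $y_n\to y_0$ via compactness of $\Lambda$ is not actually needed: since $\Lambda_\delta\subset B_\rho(0)$, for each fixed $z$ and all $n$ large enough (independently of where $y_n$ sits in $\Lambda$) one has $\varepsilon_n z+y_n\in\Lambda_\delta\subset B_\rho(0)$, so $\Upsilon(\varepsilon_n z+y_n)-y_n=\varepsilon_n z\to 0$ pointwise and the dominated convergence argument runs directly with the moving $y_n$; this is what the paper implicitly does, and it only uses boundedness of $\Lambda$ rather than compactness. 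Both versions are sound, so this is a stylistic, not substantive, difference.
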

\begin{proof}
Assume by contradiction, then there exists $\delta_0>0,$ $\{y_n\}\subset\Lambda$ and $\varepsilon_n\rightarrow0^+$ such that
\begin{equation}\label{eq4.3}
|\beta_{\varepsilon_n}(\Phi_{\varepsilon_n}(y_n))-y_n|\geq\delta_0>0, \,\,\ \forall n\in \N.
\end{equation}
By using the definitions of $\beta_{\varepsilon_n}$ and $\Phi_{\varepsilon_n},$ we can see that
$$\beta_{\varepsilon_n}(\Phi_{\varepsilon_n}(y_n))=y_n+\frac{\int_{\R^N}[\Upsilon(\varepsilon_nx+y_n)-y_n]|\eta(|\varepsilon_nx|)w(x)|^2dx}{\int_{\R^N}|\eta(|\varepsilon_nx|)w(x)|^2dx}.$$
Taking into account the Lebesgue dominant convergence theorem, we can infer  that
$$|\beta_{\varepsilon_n}(\Phi_{\varepsilon_n}(y_n))-y_n|\rightarrow0$$
which contradicts \eqref{eq4.3}.
\end{proof}
\begin{lemma}\label{lem4.3}
For any $\tau>0,$ let $\{u_n\}\subset\mathcal{M}_{\tau}$ with $I_{\tau}(u_n)\rightarrow m_{\tau}.$ Then $\{u_n\}$ has a subsequence strongly convergent in $H^s(\R^N).$ Particulary, there exists a minimizer for $m_{\tau}.$
\end{lemma}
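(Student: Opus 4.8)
The plan is to produce, via Ekeland's variational principle, a Palais--Smale sequence out of the given minimising sequence and then to run Lions' concentration--compactness dichotomy, the whole argument being controlled by the strict bound $0<m_\tau<\frac{s}{N}S^{N/2s}$, which forbids both vanishing and loss of mass into a critical bubble. First I would record the elementary facts. For $v\in\mathcal{M}_\tau$ the estimate \eqref{eq3.3.2}, carried out with the constant potential $\tau$, gives $I_\tau(v)\ge(\frac12-\frac1\alpha)\|v\|_\tau^2$ (here $\|u\|_\tau^2:=\int_{\R^N}|(-\Delta)^{s/2}u|^2dx+\tau\int_{\R^N}u^2dx$), and the analogue of Lemma~\ref{Lem3.4} gives $\|v\|_\tau\ge r>0$; hence $\{u_n\}$ is bounded in $H^s(\R^N)$ and $m_\tau\ge(\frac12-\frac1\alpha)r^2>0$. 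The minimax characterisation of $m_\tau$ recalled before Lemma~\ref{Lem3.5}, together with Lemma~\ref{Lem3.5} itself, yields $m_\tau<\frac{s}{N}S^{N/2s}$. Applying Ekeland's variational principle to the minimisation of $I_\tau$ on $\mathcal{M}_\tau$ (equivalently of $\psi_\tau$ on the unit sphere, through the Szulkin--Weth homeomorphism as in Lemma~\ref{lem3.3.1}) I may replace $\{u_n\}$ by a sequence at vanishing $H^s$-distance which is a $(PS)_{m_\tau}$ sequence for $I_\tau$; since the conclusion of the lemma is stable under such a perturbation and under translations, I assume this from the outset. Lemma~\ref{Lem3.3} applied to $I_\tau$ then gives $\|u_n^-\|_\tau=o_n(1)$, so replacing $u_n$ by $u_n^+$ I may assume $u_n\ge0$.

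Next I would exclude vanishing. Suppose $\lim_n\sup_{y\in\R^N}\int_{B_R(y)}|u_n|^2dx=0$ for every $R>0$. Then Lemma~\ref{Lem2.2} gives $u_n\to0$ in $L^t(\R^N)$ for all $t\in(2,2^*_s)$, and \eqref{eq3.1} together with the Hardy--Littlewood--Sobolev inequality (Lemma~\ref{Lem2.3}) forces $\int_{\R^N}(\frac1{|x|^\mu}\ast F(u_n))F(u_n)dx\to0$ and $\int_{\R^N}(\frac1{|x|^\mu}\ast F(u_n))f(u_n)u_ndx\to0$. From $\langle I_\tau'(u_n),u_n\rangle=o_n(1)$ one then gets $\|u_n\|_\tau^2=\|u_n\|_{L^{2^*_s}}^{2^*_s}+o_n(1)\to l\ge0$. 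If $l=0$, then $m_\tau=\lim_nI_\tau(u_n)=0$, contradicting $m_\tau>0$. If $l>0$, then Lemma~\ref{Lem2.1} gives $S\|u_n\|_{L^{2^*_s}}^2\le\|u_n\|_\tau^2$, so $l^{2/2^*_s}\le S^{-1}l$ and hence $l\ge S^{N/2s}$, whence $m_\tau=\frac{s}{N}l\ge\frac{s}{N}S^{N/2s}$, again a contradiction. Therefore there exist $\{y_n\}\subset\R^N$ and $r,\beta>0$ with $\liminf_n\int_{B_r(y_n)}|u_n|^2dx\ge\beta$.

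Now set $\tilde u_n(x):=u_n(x+y_n)$. Since $I_\tau$ is translation invariant, $\{\tilde u_n\}$ is again a $(PS)_{m_\tau}$ sequence for $I_\tau$, and up to a subsequence $\tilde u_n\rightharpoonup\tilde u\ne0$ in $H^s(\R^N)$. Passing to the limit in $\langle I_\tau'(\tilde u_n),\varphi\rangle$ for $\varphi\in C_0^\infty(\R^N)$ --- weak convergence for the linear part, local compactness from Lemma~\ref{Lem2.4} for the subcritical convolution term, and a.e.\ convergence together with $L^{(2^*_s)'}$-boundedness for the critical term --- shows $I_\tau'(\tilde u)=0$, so (as $\tilde u\ge0$, $\tilde u\ne0$) one has $\tilde u\in\mathcal{M}_\tau$ and $I_\tau(\tilde u)\ge m_\tau$. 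Conversely, writing $I_\tau(\tilde u)=I_\tau(\tilde u)-\frac1\alpha\langle I_\tau'(\tilde u),\tilde u\rangle$ as a sum of nonnegative terms --- the convolution term because $F(t)\le\frac2\alpha f(t)t$, the $L^{2^*_s}$ term because $\alpha<2^*_s$, exactly as in \eqref{eq3.3.2} --- weak lower semicontinuity of $\|\cdot\|_\tau$ and Fatou's lemma give $I_\tau(\tilde u)\le\liminf_n\big(I_\tau(\tilde u_n)-\frac1\alpha\langle I_\tau'(\tilde u_n),\tilde u_n\rangle\big)=m_\tau$. Hence $I_\tau(\tilde u)=m_\tau$, so $\tilde u$ is a minimiser for $m_\tau$. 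Finally, with $w_n:=\tilde u_n-\tilde u\rightharpoonup0$, the analogues of Lemmas~\ref{lem3.8} and~\ref{lem3.9} for $I_\tau$ give $I_\tau(w_n)=I_\tau(\tilde u_n)-I_\tau(\tilde u)+o_n(1)\to0$ and $\|I_\tau'(w_n)\|=o_n(1)$; combining these with $I_\tau(w_n)-\frac1\alpha\langle I_\tau'(w_n),w_n\rangle\ge(\frac12-\frac1\alpha)\|w_n\|_\tau^2$ and the boundedness of $\{w_n\}$ forces $\|w_n\|_\tau\to0$, i.e.\ $u_n(\cdot+y_n)\to\tilde u$ in $H^s(\R^N)$ along a subsequence. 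This proves the lemma.

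The main difficulty is the one typical of critical problems: the minimising sequence could spread out (vanishing) or concentrate into an Aubin--Talenti bubble, and in either case the weak limit would not be a minimiser. Both scenarios are excluded by the single strict inequality $m_\tau<\frac{s}{N}S^{N/2s}$ of Lemma~\ref{Lem3.5} --- vanishing directly, as above, and bubbling implicitly, since once the nontrivial profile $\tilde u$ has been extracted the Brezis--Lieb splitting assigns energy $0$ to the residue $w_n$, and the $\alpha$-inequality turns ``zero energy plus asymptotic criticality'' into strong convergence $w_n\to0$. A secondary point to check is that the Brezis--Lieb-type Lemmas~\ref{lem3.8}--\ref{lem3.9}, stated for $J_\varepsilon$, hold verbatim for $I_\tau$ (they do, $I_\tau$ being of the same form with $V$ replaced by the constant $\tau$), and that the $(PS)$ sequence furnished by Ekeland can indeed be taken nonnegative via Lemma~\ref{Lem3.3}.
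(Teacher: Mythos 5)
Your proposal is correct and goes through, but it is considerably more detailed than the paper's own argument. The paper's proof of this lemma is essentially a one-line appeal to Lemma~\ref{lem3.7}: after Ekeland gives a $(PS)_{m_\tau}$ sequence and Lemma~\ref{Lem3.5} gives $m_\tau<\frac{s}{N}S^{N/2s}$, it asserts that Lemma~\ref{lem3.7} yields a strongly convergent subsequence. You instead run the full concentration--compactness argument from scratch: boundedness and positivity of $m_\tau$ from the $\alpha$-inequality and the analogue of Lemma~\ref{Lem3.4}; exclusion of vanishing via Lemma~\ref{Lem2.2} and the $S$-quotient, which forces $m_\tau\ge\frac{s}{N}S^{N/2s}$ otherwise; translation to obtain a nontrivial weak limit $\tilde u$; identification of $\tilde u$ as a critical point on $\mathcal{M}_\tau$ with $I_\tau(\tilde u)=m_\tau$ by a Fatou/lower-semicontinuity sandwich; and finally strong convergence of $w_n=\tilde u_n-\tilde u$ through the Brezis--Lieb splitting (Lemmas~\ref{lem3.8}--\ref{lem3.9}) combined once more with the $\alpha$-inequality. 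This is a genuinely more self-contained route: Lemma~\ref{lem3.7} as written relies on the gap $V(\varepsilon x)<V_\infty$ somewhere, which degenerates for the autonomous functional $I_\tau$ (the ``potential at infinity'' equals $\tau$ itself), so the literal citation in the paper glosses over exactly the vanishing/translation dichotomy that you spell out. Your proof also makes explicit, and correctly so, that the conclusion is strong convergence \emph{up to translation}, which is the form in which Lemma~\ref{lem4.3} is actually used in Lemma~\ref{lem4.4}.

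Two small remarks. First, when you pass from the minimizing sequence to a nonnegative $(PS)$ sequence via Lemma~\ref{Lem3.3}, you should be slightly careful: $u_n^+$ need not lie on $\mathcal{M}_\tau$, but since $\|u_n^-\|_\tau=o_n(1)$ the $(PS)$ property and the energy level are preserved, which is all you use. Second, in showing $I_\tau(\tilde u)\le m_\tau$ by Fatou, the weak lower semicontinuity is applied to $\|\cdot\|_\tau^2$ and the two nonnegative integrands $\big(\frac1{|x|^\mu}\ast F(v^+)\big)\big(\frac2\alpha f(v^+)v^+-F(v^+)\big)$ and $|v^+|^{2^*_s}$; both are nonnegative and converge a.e., so the argument is sound, and indeed the same $\alpha$-inequality is what later forces $\|w_n\|_\tau\to0$. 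Everything checks out.
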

\begin{proof}
From the proof of Lemma \ref{Lem3.3} and Lemma \ref{Lem3.5}, we know that $\{u_n\}$ is bounded in $H^s(\R^N)$ and $m_{\tau}<\frac{s}{N}S^{\frac{N}{2s}}.$ By the Ekeland Variational principle, we may assume that $\{u_n\}$ is a $(PS)_{m_{\tau}}$ sequence of $I_{\tau}.$ Then, by Lemma \ref{lem3.7}, there exists $u\in H^s(\R^N)$ such that, up to a subsequence, $u_n\rightarrow u$ in $H^s(\R^N).$ Moreover, $u$ is a minimizer of $m_{\tau}.$
\end{proof}
\begin{lemma}\label{lem4.4}
Let $\varepsilon_n\rightarrow0$ and $u_n\in\mathcal{N}_{\varepsilon_n}$ be such that $J_{\varepsilon_n}(u_n)\rightarrow m_{_{V_0}}.$ Then there exists a sequence $\{y_n\}\subset\R^N$ such that $u_n(\cdot+y_n)$ has a convergent subsequence in $H^s(\R^N).$ Moreover, up to a subsequence, $\tilde{y}_n=\varepsilon_ny_n\rightarrow y\in\Lambda.$
\end{lemma}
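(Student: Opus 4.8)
The plan is to run a concentration--compactness argument. First I would note that, since $u_n\in\mathcal N_{\varepsilon_n}$ and $J_{\varepsilon_n}(u_n)\to m_{V_0}$, the computation in Lemma~\ref{Lem3.3} (which uses $F(t)\le\frac12 f(t)t$ from $(f_3)$) shows $\{u_n\}$ is bounded in $H^s(\R^N)$, uniformly in $n$; by Ekeland's variational principle on $\mathcal N_{\varepsilon_n}$ (via the homeomorphism of Lemma~\ref{lem3.3.1}) we may assume in addition $J'_{\varepsilon_n}(u_n)\to 0$, and then Lemma~\ref{Lem3.3} gives $\|u_n^-\|_{\varepsilon_n}\to 0$, so we may take $u_n\ge 0$. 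Next I rule out vanishing: if $\sup_{y\in\R^N}\int_{B_R(y)}|u_n|^2\,dx\to 0$ for every $R>0$, then by Lemma~\ref{Lem2.2} $u_n\to 0$ in $L^r(\R^N)$ for $r\in(2,2^*_s)$, so by Lemma~\ref{Lem2.3} the Choquard term vanishes, and testing $\langle J'_{\varepsilon_n}(u_n),u_n\rangle=0$ gives $\|u_n\|_{\varepsilon_n}^2=\|u_n\|_{2^*_s}^{2^*_s}+o_n(1)\to\ell$ with $\ell>0$ by Lemma~\ref{Lem3.4}; Lemma~\ref{Lem2.1} then forces $\ell\ge S^{N/2s}$, whence $m_{V_0}=\lim J_{\varepsilon_n}(u_n)=\frac sN\ell\ge\frac sN S^{N/2s}$, contradicting $m_{V_0}<\frac sN S^{N/2s}$ (Lemma~\ref{Lem3.5}). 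Hence there are $y_n\in\R^N$ and $R,\beta>0$ with $\liminf_n\int_{B_R(y_n)}|u_n|^2\,dx>\beta$; setting $v_n:=u_n(\cdot+y_n)$, translation invariance of the $H^s$-norm yields, along a subsequence, $v_n\rightharpoonup v$ in $H^s(\R^N)$, $v_n\to v$ in $L^r_{loc}$ and a.e., with $v\ge 0$, $v\not\equiv 0$.

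The heart of the argument is to show $\tilde y_n:=\varepsilon_n y_n$ is bounded and that it concentrates on $\Lambda$. Write $V_n(x):=V(\varepsilon_n x+\tilde y_n)\ge V_0$ and let $\tilde J_n$ be the energy functional with potential $V_n$, so that $\tilde{J}'_n(v_n)\to 0$ and $\tilde J_n(v_n)=J_{\varepsilon_n}(u_n)\to m_{V_0}$. Suppose $|\tilde y_n|\to\infty$. By $(V_0)$, $V_n\ge V_\infty-o_n(1)$ on every compact set; letting $n\to\infty$ in $\langle\tilde{J}'_n(v_n),\varphi\rangle=o_n(1)$ for $\varphi\in C_0^\infty(\R^N)$, $\varphi\ge 0$ — using Lemma~\ref{lem3.8} to pass to the limit in the nonlocal term and $|v_n|^{2^*_s-2}v_n\rightharpoonup v^{2^*_s-1}$ in $L^{(2^*_s)'}$ in the critical term — shows $v$ is a nonnegative nontrivial weak \emph{subsolution} of $(-\Delta)^s v+V_\infty v=(\frac1{|x|^\mu}\ast F(v))f(v)+v^{2^*_s-1}$. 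By Lemma~\ref{Lem3.2} (applied to $I_{V_\infty}$) there is $t_*\in(0,1]$ with $t_*v\in\M_{V_\infty}$; and since $(f_3)$ makes both $t\mapsto\|tv\|_{2^*_s}^{2^*_s}$ and $t\mapsto\int(\frac1{|x|^\mu}\ast F(tv))(f(tv)tv-F(tv))\,dx$ nondecreasing, while Fatou's lemma applied to the potential-free identity $J_{\varepsilon_n}(u_n)=J_{\varepsilon_n}(u_n)-\frac12\langle J'_{\varepsilon_n}(u_n),u_n\rangle=\frac sN\int|v_n|^{2^*_s}+\frac12\int(\frac1{|x|^\mu}\ast F(v_n))(f(v_n)v_n-F(v_n))$ gives $m_{V_0}\ge\frac sN\int v^{2^*_s}+\frac12\int(\frac1{|x|^\mu}\ast F(v))(f(v)v-F(v))=:\Theta(v)$, we obtain $m_{V_\infty}\le I_{V_\infty}(t_*v)=\Theta(t_*v)\le\Theta(v)\le m_{V_0}$, contradicting $m_{V_0}<m_{V_\infty}$. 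Hence $\tilde y_n$ is bounded; up to a subsequence $\tilde y_n\to y$, now $V_n\to V(y)$ locally uniformly, and the same limiting procedure (with equalities in place of the inequality) shows $v\in\M_{V(y)}$, so $m_{V(y)}\le I_{V(y)}(v)=\Theta(v)\le m_{V_0}\le m_{V(y)}$; since $\tau\mapsto m_\tau$ is strictly increasing and $V(y)\ge V_0$, this forces $V(y)=V_0$, i.e.\ $y\in\Lambda$.

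Finally, all the inequalities above are equalities, so $v$ minimises $m_{V_0}$ and, using that $\liminf a_n\ge a$, $\liminf b_n\ge b$, $a_n+b_n\to a+b$ imply $a_n\to a$ and $b_n\to b$, each term of $\Theta(v_n)$ converges to the corresponding term of $\Theta(v)$; in particular $\|v_n\|_{2^*_s}\to\|v\|_{2^*_s}$, hence $v_n\to v$ in $L^{2^*_s}(\R^N)$ by Brezis--Lieb. Writing $w_n:=v_n-v\rightharpoonup 0$ and applying the splittings of Lemmas~\ref{lem3.8}--\ref{lem3.9} to the Nehari identity and to $\tilde J_n(v_n)=m_{V_0}+o_n(1)$, together with $\|v\|^2_{\mathcal{D}^{s,2}}+V(y)\|v\|_2^2=\int(\frac1{|x|^\mu}\ast F(v))f(v)v+\int v^{2^*_s}$, one gets $\langle\tilde{J}'_n(w_n),w_n\rangle\to 0$ and $\tilde J_n(w_n)\to 0$, whence $\|w_n\|^2_{\mathcal{D}^{s,2}}+\int V_n w_n^2\to 0$; since $V_n\ge V_0>0$ this gives $w_n\to 0$ in $H^s(\R^N)$, i.e.\ $u_n(\cdot+y_n)\to v$ in $H^s(\R^N)$. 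I expect the main obstacle to be the boundedness of $\tilde y_n$: when $\tilde y_n$ may diverge the linear potential term does not converge, so one cannot directly read off a limit equation for $v$; passing instead to the subsolution on $\M_{V_\infty}$ and exploiting the monotonicity of the scaled potential-free energy is exactly what turns the weak-limit information into the comparison $m_{V_\infty}\le m_{V_0}$, and hence the contradiction.
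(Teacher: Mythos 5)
Your proposal is correct, but it departs from the paper's own argument in an interesting way. Both proofs begin identically: vanishing is excluded via Lemma \ref{Lem2.2}, Lemma \ref{Lem3.4} and the strict bound $m_{V_0}<\frac sN S^{N/2s}$ (Lemma \ref{Lem3.5}), producing $y_n$ with $u_n(\cdot+y_n)\rightharpoonup v\not\equiv 0$. From there, however, the paper never passes to a limit equation for $v$: it projects the \emph{translated sequence} onto the fixed Nehari manifold, choosing $t_n>0$ with $t_nu_n(\cdot+y_n)\in\mathcal{M}_{V_0}$, and squeezes
$m_{V_0}\le I_{V_0}\bigl(t_nu_n(\cdot+y_n)\bigr)\le J_{\varepsilon_n}(t_nu_n)\le J_{\varepsilon_n}(u_n)\to m_{V_0}$
using only $V\ge V_0$ and the fact that $u_n\in\mathcal N_{\varepsilon_n}$ maximizes $J_{\varepsilon_n}$ along its ray; Lemma \ref{lem4.3} then gives strong convergence of $t_nu_n(\cdot+y_n)$ outright, $t_n$ is pinned to a positive limit, and a separate Fatou comparison with $V_\infty$ bounds $\tilde y_n$. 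You instead project the \emph{weak limit} $v$ onto $\mathcal{M}_{V_\infty}$ or $\mathcal{M}_{V(y)}$, reading off from a limit (sub)equation that $\langle I_{V_\infty}'(v),v\rangle\le 0$ and comparing energies through the translation-invariant Nehari quotient $\Theta$. This is a legitimate alternative that makes the roles of $V_\infty$, $V(y)$ and $V_0$ very transparent, but it carries two extra burdens the paper avoids: (a) you need genuine dual-norm decay $J'_{\varepsilon_n}(u_n)\to 0$, hence Ekeland applied across the \emph{varying} family $\{\mathcal N_{\varepsilon_n}\}$; the uniformity does hold because $m_{V_0}\le c_{\varepsilon_n}\le J_{\varepsilon_n}(u_n)\to m_{V_0}$, and the $H^s$-closeness of the Ekeland correction lets the conclusion transfer back to the original $u_n$, but you should spell this out --- the paper only uses the pointwise Nehari identity $\langle J'_{\varepsilon_n}(u_n),u_n\rangle=0$, and invokes Ekeland just once, inside Lemma \ref{lem4.3}, for the fixed functional $I_{V_0}$; (b) strong $H^s$-convergence has to be rebuilt at the end via the $\Theta$-Fatou equalities and Brezis--Lieb, rather than falling out of Lemma \ref{lem4.3}. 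Also make explicit that $s\mapsto f(s)s-F(s)$ is nondecreasing under $(f_3)$ with $f$ merely continuous --- e.g.\ $f(s_2)s_2-f(s_1)s_1-\int_{s_1}^{s_2}f(\tau)\,d\tau\ge s_1\bigl(f(s_2)-f(s_1)\bigr)\ge 0$ since $f$ is nondecreasing and nonnegative on $[0,\infty)$ --- as this is what underlies your inequality $\Theta(t_*v)\le\Theta(v)$.
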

\begin{proof}
Since $u_n\in\mathcal{N}_{\varepsilon_n}$ and $\lim\limits_{n\rightarrow\infty}J_{\varepsilon_n}(u_n)=m_{_{V_0}},$ by Lemma \ref{Lem3.3} we can see that $\{u_n\}$ is bounded in $H^s(\R^N).$ By Lemma \ref{Lem3.4}, we have $\|u_n\|_{\varepsilon_n}\nrightarrow0.$ we can argue as in Lemma \ref{Lem3.6} to obtain a sequence $\{y_n\}$ and constant $r>0$ such that
\begin{equation}\label{eq4.4}
\liminf\limits_{n\rightarrow\infty}\int_{B_r(y_n)}|u_n(x)|^2dx=\beta>0.
\end{equation}
Note, if this is false, then  for any $r>0,$ we have
$$\lim\limits_{n\rightarrow\infty}\sup\limits_{y\in\R^N}\int_{B_r(y)}|u_n|^2dx=0.$$
By Lemma \ref{Lem2.2}, we know that $u_n\rightarrow0$ in $L^t(\R^N)$ for $t\in[2,2^*_s),$ we can argue as  the proof of \eqref{eq3.2} and we deduce that $$\int_{\R^N}(\frac{1}{|x|^{\mu}}\ast F(u))f(u)udx=o_n(1).$$
As the proof of Lemma \ref{Lem3.6}, we can prove $\int_{\R^N}|u|^{2^*_s}dx=o_n(1).$ Since $u_n\in\mathcal{N}_{\varepsilon_n},$ we get $\|u_n\|_{\varepsilon_n}=o_n(1),$ which gives a contradiction. Hence, \eqref{eq4.4} holds. Now, we set $\tilde{u}_n=u_n(\cdot+y_n).$ Since, $\{u_n\}$ is bounded in $H^s(\R^N)$ and \eqref{eq4.4}, up to a subsequence, we have $\tilde{u}_n\rightharpoonup\tilde{u}\neq0$ in $H^s(\R^N)$ and $\tilde{u}_n(x)\rightarrow\tilde{u}(x)$ $a.e.$ in $\R^N.$ Fix $t_n>0$ such that $t_n\tilde{u}_n\in \mathcal{M}_{V_0}$ and set $\tilde{y}_n=\varepsilon_ny_n.$
Since $u_n\in\mathcal{N}_{\varepsilon_n},$ we can see that
\begin{equation*}
\begin{split}
m_{_{V_0}}&\leq I_{V_0}(t_n\tilde{u}_n)\\
&=\frac{1}{2}t^2_n[\tilde{u}_n]^2+\frac{t^2_n}{2}\int_{\R^N}V_0\tilde{u}_n^2dx-\frac{1}{2^*_s}\int_{\R^N}|t_n\tilde{u}_n^+|^{2^*_s}dx-\frac{1}{2}(\frac{1}{|x|^{\mu}}\ast F(t_n\tilde{u}_n^+))F(t_n\tilde{u}_n^+)dx\\
&\leq J_{\varepsilon_n}(t_nu_n)\\
&\leq J_{\varepsilon_n}(u_n)\\
&=m_{_{V_0}}+o_n(1).
\end{split}
\end{equation*}
which gives $$\lim\limits_{n\rightarrow\infty}I_{V_0}(t_n\tilde{u}_n)=m_{_{V_0}}>0.$$
By Lemma \ref{lem4.3}, up to subsequence, we get $t_n\tilde{u}_n:=v_n\rightarrow v_0$ in $H^s(\R^N).$ Note,
\begin{equation*}
\beta=\liminf\limits_{n\rightarrow\infty}\int_{B_r(y_n)}|u_n(x)|^2dx=\liminf\limits_{n\rightarrow\infty}\int_{B_r(0)}|\tilde{u}_n(x)|^2dx\leq\liminf_{n\rightarrow\infty}\|\tilde{u}_n\|^2_{H^s(\R^N)}.
\end{equation*}
For large $n,$ we have $0<\frac{\beta}{2}<\|\tilde{u}_n\|^2_{H^s(\R^N)},$ then $$0<\frac{\beta}{2}t_n^2<\|t_n\tilde{u}_n\|^2_{H^s(\R^N)}=\|v_n\|^2_{H^s(\R^N)}\leq C.$$
Hence $\{t_n\}$ is bounded, and we may assume that $t_n\rightarrow t^*>0.$ So, up to a subsequence, we have $$v_n\rightarrow v_0=t^*\tilde{u}\neq0 \text{ in } H^s(\R^N), \,\,\  \,\,\ \tilde{u}_n\rightarrow\frac{1}{t^*}v_0=\tilde{u} \text{ in } H^s(\R^N).$$
In order to complete the proof of the lemma, we show that $\{\tilde{y}_n\}$ is bounded in $\R^N.$ We argue by contradiction, up to a subsequence, we assume that $|\tilde{y}_n|\rightarrow\infty.$ Notice that, up to subsequence, we have $v_n\rightarrow v_0\neq0$ in $H^s(\R^N).$
By Fatou's lemma we get
\begin{equation*}
\begin{split}
m_{_{V_0}}&=I_{V_0}(v_0)\\
&<I_{V_{\infty}}(v_0)-\frac{1}{2}\langle I'_{V_0}(v_0),v_0\rangle\\
&=\frac{1}{2}\int_{\R^N}(V _{\infty}v_0^2-V_0v_0^2)dx-\frac{1}{2}\int_{\R^N}(\frac{1}{|x|^{\mu}}\ast F(v_0^+))F(v_0^+)dx+\frac{1}{2}\int_{\R^N}(\frac{1}{|x|^{\mu}}\ast F(v_0^+))f(v_0^+)v^+_0dx\\
& \,\,\ \,\,\ -\frac{1}{2^*_s}\int_{\R^N}|v_0^+|^{2^*_S}dx+\frac{1}{2}\int_{\R^N}|v_0^+|^{2^*_s}dx\\
&\leq\liminf\limits_{n\rightarrow\infty}(J_{\varepsilon_n}(v_n)-\frac{1}{2}\langle I'_{V_0}(v_n),v_n\rangle)\\
&=\liminf\limits_{n\rightarrow\infty}J_{\varepsilon_n}(v_n)\\
&\leq\lim\limits_{n\rightarrow\infty}J_{\varepsilon_n}(u_n)=m_{_{V_0}}
\end{split}
\end{equation*}
which is a contradiction, so we get $\{\tilde{y}_n\}$ is bounded in $\R^N.$ Therefore, up to subsequence, $\tilde{y}_n\rightarrow y\in\R^N.$ If $y\in\R^N\setminus\Lambda$ then $V_0<V(y).$ This is a contradiction. Hence, we can conclude that $y\in\Lambda.$
\end{proof}
\par
Now, we introduce a subset $\tilde{\mathcal{N}}_{\varepsilon}$ of $\mathcal{N}_{\varepsilon}$ by setting
$$\tilde{\mathcal{N}}_{\varepsilon}=\{u\in\mathcal{N}_{\varepsilon}:J_{\varepsilon}(u)\leq m_{_{V_0}}+h(\varepsilon)\},$$
where $h(\varepsilon):=\max\limits_{y\in\Lambda}|J_{\varepsilon}(\Phi_{\varepsilon}(y))-m_{_{V_0}}|.$ Then, we can use Lemma \ref{lem4.1} to conclude that $$\lim\limits_{\varepsilon\rightarrow0^+}h(\varepsilon)=0.$$
\par
Hence, for each $y\in\Lambda$ and $\varepsilon>0,$ we have $\Phi_{\varepsilon}(y)\in\tilde{\mathcal{N}}_{\varepsilon}.$
By Lemma \ref{lem4.4}, we can prove the following Lemma.
\begin{lemma}\label{lem4.5}
For any $\delta>0,$ we have
$$\lim\limits_{\varepsilon\rightarrow0}\sup\limits_{u\in\tilde{\mathcal{N}}_{\varepsilon}}dist(\beta_{\varepsilon}(u),\Lambda_{\delta})=0.$$
\end{lemma}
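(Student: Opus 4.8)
The plan is to argue by contradiction and then feed the resulting sequence into the compactness statement of Lemma~\ref{lem4.4}. Suppose the conclusion fails: then there are $\delta_0>0$, a sequence $\varepsilon_n\to 0^+$ and functions $u_n\in\tilde{\mathcal{N}}_{\varepsilon_n}$ with $\operatorname{dist}(\beta_{\varepsilon_n}(u_n),\Lambda_\delta)\geq\delta_0$ for every $n$. The first step is to check that $J_{\varepsilon_n}(u_n)\to m_{_{V_0}}$. The upper estimate is built into the definition of $\tilde{\mathcal{N}}_{\varepsilon_n}$, since $J_{\varepsilon_n}(u_n)\leq m_{_{V_0}}+h(\varepsilon_n)$ and $h(\varepsilon_n)\to 0$ by Lemma~\ref{lem4.1}. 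For the matching lower estimate I would use the pointwise bound $V(\varepsilon_n x)\geq V_0$ together with a Nehari projection: for $u_n\in\mathcal{N}_{\varepsilon_n}$ (so $u_n^+\not\equiv 0$) the analogue of Lemma~\ref{Lem3.2} for $I_{V_0}$ yields $t_n>0$ with $t_nu_n\in\mathcal{M}_{V_0}$, and then
\begin{equation*}
m_{_{V_0}}\leq I_{V_0}(t_nu_n)\leq J_{\varepsilon_n}(t_nu_n)\leq\max_{t\geq 0}J_{\varepsilon_n}(tu_n)=J_{\varepsilon_n}(u_n),
\end{equation*}
the last equality being part~$(i)$ of Lemma~\ref{Lem3.2}. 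Combining the two bounds gives $J_{\varepsilon_n}(u_n)\to m_{_{V_0}}$, so the hypotheses of Lemma~\ref{lem4.4} are met.

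Next I would invoke Lemma~\ref{lem4.4}: up to a subsequence there is $\{y_n\}\subset\R^N$ with $\tilde{u}_n:=u_n(\cdot+y_n)\to\tilde{u}\neq 0$ strongly in $H^s(\R^N)$, hence in $L^2(\R^N)$, and $\tilde{y}_n:=\varepsilon_ny_n\to y\in\Lambda$. Changing variables $x=z+y_n$ in the definition of $\beta_{\varepsilon_n}$ gives
\begin{equation*}
\beta_{\varepsilon_n}(u_n)=\frac{\displaystyle\int_{\R^N}\Upsilon(\varepsilon_nz+\tilde{y}_n)|\tilde{u}_n(z)|^2\,dz}{\displaystyle\int_{\R^N}|\tilde{u}_n(z)|^2\,dz}.
\end{equation*}
Since $\Upsilon$ is continuous and bounded by $\rho$, $\varepsilon_nz\to 0$ and $\tilde{y}_n\to y$, while $|\tilde{u}_n|^2\to|\tilde{u}|^2$ in $L^1(\R^N)$, a dominated-convergence argument (with the $L^1$-convergent majorant $\rho\,|\tilde{u}_n|^2$) yields $\beta_{\varepsilon_n}(u_n)\to\Upsilon(y)$. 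Because $\Lambda\subset\Lambda_\delta\subset B_\rho(0)$ we have $|y|<\rho$, so $\Upsilon(y)=y$, and therefore $\beta_{\varepsilon_n}(u_n)\to y\in\Lambda\subset\Lambda_\delta$. This forces $\operatorname{dist}(\beta_{\varepsilon_n}(u_n),\Lambda_\delta)\to 0$, contradicting the choice of $\delta_0$, and the lemma follows.

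The genuinely substantial input — extracting a translated subsequence that converges strongly in $H^s$ with concentration points $\varepsilon_ny_n$ accumulating in $\Lambda$ — is already carried out in Lemma~\ref{lem4.4}, so the only delicate point here is the first step, namely trapping the energy of an arbitrary element of $\tilde{\mathcal{N}}_{\varepsilon_n}$ between $m_{_{V_0}}$ and $m_{_{V_0}}+h(\varepsilon_n)$; this rests on nothing beyond $V(\varepsilon x)\geq V_0$ and the uniqueness of the Nehari projection onto $\mathcal{M}_{V_0}$. The passage to the limit in $\beta_{\varepsilon_n}$ is then routine, the uniform bound $|\Upsilon|\leq\rho$ making the dominated-convergence step immediate.
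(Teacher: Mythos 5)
Your argument is correct and follows essentially the same route as the paper: both reduce the claim to showing $J_{\varepsilon_n}(u_n)\to m_{_{V_0}}$ for a suitable sequence $u_n\in\tilde{\mathcal{N}}_{\varepsilon_n}$, then invoke Lemma~\ref{lem4.4} to obtain strongly convergent translates with $\varepsilon_n y_n\to y\in\Lambda$, and finally pass to the limit in the barycenter after a change of variables. The only difference is presentational — the paper extracts a near-maximizing sequence for the distance rather than arguing by contradiction, and simply asserts the lower bound $m_{_{V_0}}\leq c_{\varepsilon_n}$, which you spell out correctly via the Nehari projection onto $\mathcal{M}_{V_0}$ combined with $V(\varepsilon x)\geq V_0$.
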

\begin{proof}
Let $\varepsilon_n\rightarrow0.$ For any $n\in\N,$ there exists $\{u_n\}\subset\mathcal{\tilde{N}}_{\varepsilon_n}$ such that
$$\inf\limits_{y\in\Lambda_{\delta}}|\beta_{\varepsilon_n}(u_n)-y|=\sup\limits_{u\in\tilde{\mathcal{N}}}\inf\limits_{y\in\Lambda_{\delta}}|\beta_{\varepsilon_n}(u)-y|+o_n(1).$$
Since $\{u_n\}\in\mathcal{N}_{\varepsilon_n},$ it follow that $$m_{_{V_0}}\leq c_{\varepsilon_n}\leq J_{\varepsilon_n(u_n)}\leq m_{_{V_0}}+h(\varepsilon_n).$$
Then, $J_{\varepsilon_n}(u_n)\rightarrow m_{_{V_0}}.$ By Lemma \ref{lem4.4}, there exists $\{y_n\}\in\R^N$ such that $\{\tilde{u}_n(\cdot):=u_n(\cdot+y_n)\}$ has a convergent subsequence in $H^s(\R^N)$ and $\tilde{y}_n:=\varepsilon_ny_n\rightarrow y\in\Lambda.$ Then, $$\beta_{\varepsilon_n}(u_n)=\tilde{y}_n+\frac{\int_{\R^N}[\chi(\varepsilon_nx+\tilde{y}_n)-\tilde{y}_n]|\tilde{u}_n|^2dx}{\int_{\R^N}|\tilde{u}_n|dx}\rightarrow y\in\Lambda.$$
The proof is completed.
\end{proof}

\subsection{Proof of Theorem\ref{Thm1.2} }
\begin{lemma}\label{Lem4.9}
Assume that $(V)$ and $(f_1)-(f_4)$ hold. Then, for any $\delta>0$ there exists $\varepsilon_{\delta}>0$ such that the problem \eqref{eq1.1} has at least $cat_{\Lambda_{\delta}}(\Lambda)$ nontrivial nonnegative solutions for all $\varepsilon\in(0,\varepsilon_{\delta}).$
\end{lemma}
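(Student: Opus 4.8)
The plan is to run the standard Lusternik--Schnirelmann argument using the two maps $\Phi_\varepsilon:\Lambda\to\mathcal{N}_\varepsilon$ and $\beta_\varepsilon:\mathcal{N}_\varepsilon\to\R^N$ constructed above, together with the sublevel set $\widetilde{\mathcal{N}}_\varepsilon$. The key topological fact I would invoke is: if $\beta_\varepsilon\circ\Phi_\varepsilon$ is homotopic to the inclusion $\Lambda\hookrightarrow\Lambda_\delta$, then $\mathrm{cat}_{\widetilde{\mathcal{N}}_\varepsilon}(\widetilde{\mathcal{N}}_\varepsilon)\geq \mathrm{cat}_{\Lambda_\delta}(\Lambda)$, and then the Lusternik--Schnirelmann category theory applied to the $C^1$ functional $\psi_\varepsilon$ on the (complete $C^{1,1}$) manifold $\mathcal{S}_\varepsilon$ — equivalently $J_\varepsilon$ on $\mathcal{N}_\varepsilon$ via Lemma \ref{lem3.3.1} — yields at least $\mathrm{cat}_{\Lambda_\delta}(\Lambda)$ critical points of $J_\varepsilon$ in $\widetilde{\mathcal{N}}_\varepsilon$. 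Since $J_\varepsilon$ only involves $u^+$, each such critical point is a nonnegative weak solution of \eqref{eq2.3}, hence (after the scaling $v(x)=u(x/\varepsilon)$) of \eqref{eq1.1}; nontriviality follows from Lemma \ref{Lem3.4}, which gives $\|u\|_\varepsilon\geq r>0$ on $\mathcal{N}_\varepsilon$.

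First I would fix $\delta>0$ and choose $\varepsilon_\delta>0$ small enough that three things hold simultaneously for all $\varepsilon\in(0,\varepsilon_\delta)$: (i) $\Phi_\varepsilon(y)\in\widetilde{\mathcal{N}}_\varepsilon$ for every $y\in\Lambda$ (immediate from the definition of $\widetilde{\mathcal{N}}_\varepsilon$ and $h(\varepsilon)$); (ii) $\sup_{u\in\widetilde{\mathcal{N}}_\varepsilon}\mathrm{dist}(\beta_\varepsilon(u),\Lambda_\delta)<\delta/2$, which is exactly Lemma \ref{lem4.5}; and (iii) $\sup_{y\in\Lambda}|\beta_\varepsilon(\Phi_\varepsilon(y))-y|<\delta/2$, which is Lemma \ref{lem4.2}. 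From (i) and (iii), the composition $y\mapsto\beta_\varepsilon(\Phi_\varepsilon(y))$ maps $\Lambda$ into $\Lambda_\delta$ and is within $\delta/2$ of the identity, so the straight-line homotopy $H(t,y)=(1-t)y+t\,\beta_\varepsilon(\Phi_\varepsilon(y))$ stays in $\Lambda_\delta$ (using convexity of $\Lambda_\delta$ in the sense that each point is within $\delta$ of $\Lambda$; if $\Lambda_\delta$ is not convex one works in a tubular neighbourhood or invokes the standard argument that this homotopy remains in $\Lambda_\delta$ because both endpoints are $\delta/2$-close). Hence $\beta_\varepsilon\circ\Phi_\varepsilon\simeq\mathrm{id}$ in $\Lambda_\delta$, and the abstract category inequality $\mathrm{cat}_{\widetilde{\mathcal{N}}_\varepsilon}(\widetilde{\mathcal{N}}_\varepsilon)\geq\mathrm{cat}_{\Lambda_\delta}(\Lambda)$ follows from the usual lemma on categories of spaces linked by maps $\Lambda\xrightarrow{\Phi_\varepsilon}\widetilde{\mathcal{N}}_\varepsilon\xrightarrow{\beta_\varepsilon}\Lambda_\delta$.

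Next I would verify the Palais--Smale-type hypothesis needed to run Lusternik--Schnirelmann on $\widetilde{\mathcal{N}}_\varepsilon$. By Remark \ref{rem3.2} and Lemma \ref{lem4.1}, for $\varepsilon$ small we have $c_\varepsilon\le\sup_{y\in\Lambda}J_\varepsilon(\Phi_\varepsilon(y))\le m_{V_0}+h(\varepsilon)<\tfrac{s}{N}S^{N/2s}$ and, more importantly, $m_{V_0}+h(\varepsilon)<m_{V_\infty}$ (shrinking $\varepsilon_\delta$ further, using $m_{V_0}<m_{V_\infty}$ and $h(\varepsilon)\to0$); therefore every level $d\le m_{V_0}+h(\varepsilon)$ lies strictly below $m_{V_\infty}$, so Lemma \ref{lem3.11} guarantees $J_\varepsilon|_{\mathcal{N}_\varepsilon}$ (equivalently $\psi_\varepsilon$ on $\mathcal{S}_\varepsilon$) satisfies $(PS)_d$ on $\widetilde{\mathcal{N}}_\varepsilon$. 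The standard deformation/minimax argument of Lusternik--Schnirelmann theory then produces at least $\mathrm{cat}_{\widetilde{\mathcal{N}}_\varepsilon}(\widetilde{\mathcal{N}}_\varepsilon)$ critical points of $\psi_\varepsilon$ in $\widetilde{\mathcal{N}}_\varepsilon$, hence, by Lemma \ref{lem3.3.1}(e), at least that many critical points of $J_\varepsilon$; combined with the category inequality this gives at least $\mathrm{cat}_{\Lambda_\delta}(\Lambda)$ nontrivial nonnegative solutions of \eqref{eq2.3}, and scaling back finishes the proof.

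The main obstacle — and the point I would be most careful about — is pinning down the compactness at exactly the right energy window: one must ensure $m_{V_0}+h(\varepsilon)<\min\{m_{V_\infty},\tfrac{s}{N}S^{N/2s}\}$ for all small $\varepsilon$, since the $(PS)$ condition (Lemma \ref{lem3.11}) only holds below $m_{V_\infty}$ and the no-vanishing/splitting analysis (Lemmas \ref{Lem3.6}, \ref{lem3.7}) relies on staying below the critical Sobolev threshold. This uses the strict inequality $m_{V_0}<m_{V_\infty}$ coming from $(V_0)$ and the decay $h(\varepsilon)\to0$ from Lemma \ref{lem4.1}; everything else is the now-routine topological bookkeeping of the Benci--Cerami scheme.
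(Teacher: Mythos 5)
Your proposal is correct and follows essentially the same route as the paper: it constructs the chain $\Lambda\xrightarrow{\Phi_\varepsilon}\widetilde{\mathcal{N}}_\varepsilon\xrightarrow{\beta_\varepsilon}\Lambda_\delta$ (passing through $\widetilde{\mathcal{S}}_\varepsilon$ via $n_\varepsilon^{\pm1}$), uses Lemmas \ref{lem4.1}, \ref{lem4.2}, \ref{lem4.5} to make the composite homotopic to the inclusion, invokes Lemma \ref{lem3.11} for the $(PS)$ condition below $m_{V_\infty}$, and then applies Lusternik--Schnirelmann theory through Lemma \ref{lem3.3.1}. The only minor difference is that you spell out the energy-window estimate $m_{V_0}+h(\varepsilon)<m_{V_\infty}$ and the straight-line homotopy explicitly, details the paper delegates to the cited reference.
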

\begin{proof}
By Lemma \ref{lem4.1} and the define of $\psi_\varepsilon,$ we have
$$\lim\limits_{\varepsilon\rightarrow0}\psi_\varepsilon\big(n_\varepsilon^{-1}(\Phi_\varepsilon(y))\big)=\lim\limits_{\varepsilon\rightarrow0}J_\varepsilon(\Phi_\varepsilon(y))=m_{_{V_0}}~\text{uniformly in}~y\in\Lambda$$
Then, there exists $\varepsilon_1>0$ such that $\tilde{\mathcal{S}}_\varepsilon:=\{u\in\mathcal{S}_\varepsilon:\psi_\varepsilon(u)\leq m_{_{V_0}}+h(\varepsilon)\}\neq0$ for all $\varepsilon\in(0,\varepsilon_1).$\\
Applying Lemma \ref{lem4.1}, Lemma \ref{lem3.3.1}, Lemma \ref{lem4.2} and Lemma \ref{lem4.5}, we can find some $\varepsilon_1=\varepsilon_\delta>0$ such that the following diagram
$$\Lambda\stackrel{\Phi_{\varepsilon}}{\rightarrow}\tilde{\mathcal{N}}_\varepsilon\stackrel{n^{-1}_{\varepsilon}}{\rightarrow}\tilde{\mathcal{S}}_\varepsilon\stackrel{n_{\varepsilon}}{\rightarrow}\tilde{\mathcal{N}}_\varepsilon\stackrel{\beta_{\varepsilon}}{\rightarrow}\Lambda_\delta$$
is well defined for any $\varepsilon\in(0,\varepsilon_1).$By the proof of \cite[Theorem5.1,Theorem5.2]{Ambrosio2017Multiplicity}, we know that for $\varepsilon>0$ small enough, we deduce from Lemma \ref{lem3.11} that $\psi_\varepsilon$ satisfies the $PS$ condition in $\tilde{\mathcal{S}}_\varepsilon.$ And $\psi_\varepsilon$ has at least $cat_{\tilde{\mathcal{S}}_\varepsilon}(\tilde{\mathcal{S}}_\varepsilon)$ critical points on $\tilde{\mathcal{S}}_\varepsilon.$ By Lemma \ref{lem3.3.1} we conclude that $J_\varepsilon$ admits at least $cat_{\Lambda_\delta}(\Lambda)$ critical points on $\mathcal{N}_\varepsilon$.

\end{proof}
\par
Now, we use a Moser iteration argument \cite{Moser} to study of behavior of the maximum points of the solutions.
\begin{lemma}\label{Lem4.10}
Let $\varepsilon_n\rightarrow0$ and $u_n\in\tilde{\mathcal{N}}_{\varepsilon_n}$ is a nontrivial nonnegative solution to \eqref{eq2.3}. Then exists $y_n\in\R^N$ such that $v_n=u_n(\cdot+y_n)$ satisfies the following problem
\begin{equation}\label{eq4.5}
\left\{ \begin{array}{ll}(-\Delta)^sv_n+V_n(x)v_n=(\frac{1}{|x|^{\mu}}\ast F(v_n))f(v_n)+|v_n|^{2^*_s-2} \,\,\ \,\,\ in \,\,\ \R^N\\
v_n\in H^s(\R^N)\\
v_n\geq 0 \,\,\ \,\,\ \,\,\ \,\,\ \,\,\,\ \,\,\,\ \,\,\,\ \,\,\ \,\,\ \,\,\ \,\,\ \,\,\ \,\,\ \,\,\ \,\,\ \,\,\ \,\,\ \,\,\ \,\,\ \,\,\ \,\,\,\ \,\,\,\ \,\,\,\ \,\,\ \,\,\ \,\,\ \,\,\ \,\,\ \,\,\ \,\,\ \,\ in \,\,\ \R^N
\end{array} \right.
\end{equation}
where $V_n(x)=V(\varepsilon_nx+\varepsilon_ny_n),$ $\varepsilon_ny_n\rightarrow y\in\Lambda$ and there exists $C>0$ such that $\|v_n\|_{L^{\infty}(\R^N)}\leq C$ for all $n\in \N.$ Furthermore, $$\lim\limits_{|x|\rightarrow\infty}v_n(x)=0 \,\,\ \text{ uniformly in } \,\,\ n\in \N.$$
\end{lemma}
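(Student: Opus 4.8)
The plan is to combine the compactness already provided by Lemma~\ref{lem4.4} with a Moser iteration adapted to $(-\Delta)^s$ and the Choquard nonlinearity, organized in three steps; I expect the Moser iteration itself (Step~2) to be the main obstacle.

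\textbf{Step 1 (translation and limit equation).} Since $u_n\in\tilde{\mathcal N}_{\varepsilon_n}\subset\mathcal N_{\varepsilon_n}$, arguing as in the proof of Lemma~\ref{lem4.5} one has $m_{_{V_0}}\le c_{\varepsilon_n}\le J_{\varepsilon_n}(u_n)\le m_{_{V_0}}+h(\varepsilon_n)$, and since $h(\varepsilon_n)\to0$ this forces $J_{\varepsilon_n}(u_n)\to m_{_{V_0}}$. Hence Lemma~\ref{lem4.4} applies: there is $\{y_n\}\subset\R^N$ so that, along a subsequence, $v_n:=u_n(\cdot+y_n)\to v\neq0$ strongly in $H^s(\R^N)$ and $\tilde y_n:=\varepsilon_n y_n\to y\in\Lambda$. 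By the translation invariance of \eqref{eq2.3}, $v_n$ solves \eqref{eq4.5} with $V_n(x)=V(\varepsilon_n x+\varepsilon_n y_n)$, and $v_n\ge0$ because $u_n\ge0$. The uniform bound $\sup_n\|v_n\|_{H^s(\R^N)}<\infty$ obtained here is what makes every constant below independent of $n$.

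\textbf{Step 2 (uniform $L^\infty$ bound).} First I would check that the convolution potential is uniformly bounded: by $(f_1)$--$(f_2)$ one has $F(v_n)\le\xi v_n^2+C_\xi v_n^{q}$, and since $\{v_n\}$ is bounded in $L^r(\R^N)$ for each $r\in[2,2^*_s]$ (Lemma~\ref{Lem2.1}), splitting $\int_{\R^N}\frac{F(v_n(y))}{|x-y|^{\mu}}\,dy$ over $\{|x-y|\le1\}$ and $\{|x-y|>1\}$ and invoking Lemma~\ref{Lem2.3} yields a constant $C_0>0$, independent of $n$ and $x$, with $\big\|\frac{1}{|x|^{\mu}}\ast F(v_n)\big\|_{L^\infty(\R^N)}\le C_0$. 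Using $(f_1)$--$(f_2)$ once more, the right-hand side $g_n$ of \eqref{eq4.5} then satisfies $0\le g_n(x)\le C\big(1+v_n^{2^*_s-1}\big)$ with $C$ independent of $n$. On \eqref{eq4.5} I would next run the Moser/Brezis--Kato scheme for $(-\Delta)^s$ (as in \cite{Moser,Alves-Yang17JDE}): test with suitably truncated powers of $v_n$, use the fractional Sobolev inequality of Lemma~\ref{Lem2.1} to gain integrability at each step, and treat the critical term $v_n^{2^*_s-1}$ by first proving $v_n\in L^r(\R^N)$ for every $r<\infty$ and then bootstrapping. This produces $\|v_n\|_{L^\infty(\R^N)}\le C\big(\sup_n\|v_n\|_{H^s(\R^N)}\big)$, hence a single constant $C$ with $\|v_n\|_{L^\infty(\R^N)}\le C$ for all $n$.

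\textbf{Step 3 (uniform decay).} The \emph{strong} convergence $v_n\to v$ in $H^s(\R^N)$, together with $H^s(\R^N)\hookrightarrow L^{2^*_s}(\R^N)$, gives $\sup_n\int_{\R^N\setminus B_R(0)}|v_n|^{2^*_s}\,dx\to0$ as $R\to\infty$. Using the uniform $L^\infty$ bound of Step~2 to regard the critical term as a bounded coefficient, I would re-run the iteration of Step~2 localized on the exterior regions $\R^N\setminus B_R(0)$, with cut-off functions supported there, obtaining $\sup_n\|v_n\|_{L^\infty(\R^N\setminus B_{R+1}(0))}\le C\big(\sup_n\|v_n\|_{L^{2^*_s}(\R^N\setminus B_R(0))}\big)^{\theta}+o_R(1)$ for some $\theta\in(0,1]$, whose right-hand side tends to $0$ as $R\to\infty$. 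This is exactly $v_n(x)\to0$ as $|x|\to\infty$, uniformly in $n$, and completes the proof. The hard part is precisely the iteration of Step~2: the non-locality of $(-\Delta)^s$ makes the test-function algebra (Leibniz-type inequalities for truncated powers) more delicate than in the local case, the critical exponent $2^*_s$ rules out a direct iteration and forces the Brezis--Kato truncation, and one must track at every stage that the constants depend only on $\sup_n\|v_n\|_{H^s(\R^N)}$ and on $C_0$, so that the final $L^\infty$ and decay bounds are genuinely uniform in $n$.
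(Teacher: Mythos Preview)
Your plan is correct and follows essentially the same route as the paper: invoke Lemma~\ref{lem4.4} for the translated sequence and its compactness, bound the convolution $\|\tfrac{1}{|x|^{\mu}}\ast F(v_n)\|_{L^\infty}$ uniformly, and run a Moser iteration with the truncated test functions $v_n v_{L,n}^{2(\beta-1)}$. The paper makes two points explicit that you leave implicit: the fractional ``chain rule'' inequality \eqref{eq4.6}--\eqref{eq4.7} is obtained via Jensen's inequality, and the critical term is absorbed by splitting over $\{v_n\le R_0\}$ and $\{v_n>R_0\}$ with $R_0$ chosen so that $\big(\int_{\{v_n>R_0\}}|v_n|^{2^*_s}\big)^{(2^*_s-2)/2^*_s}\le \tfrac{1}{2C\beta^2}$; for the uniform decay the paper simply cites \cite{Alves2016} rather than carrying out your localized iteration.
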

\begin{proof}
For any $L>0$ and $\beta>1,$ let us define the function
$$r(v_n)=r_{L,\beta}(v_n)=v_nv_{L,n}^{2(\beta-1)}\in H^s(\R^N)$$
where $v_{L,n}=\min\{v_n,L\}.$ Since $r$ is an increasing function in $(0,+\infty),$ then we have $$(a-b)(r(a)-r(b))\geq0 \,\,\ \text{ for any } \,\,\ a,b\in\R^+.$$
Define the functions $$H(t)=\frac{|t|^2}{2} \,\,\ \text{ and } \,\,\ L(t)=\int^t_0(r'(\tau))^{\frac{1}{2}}d\tau.$$
For all $a,b\in\R$ such that $a>b,$ by applying Jensen inequality we get
\begin{equation*}
\begin{split}
H'(a-b)(r(a)-r(b))&=(a-b)(r(a)-r(b))=(a-b)\int^a_br'(t)dt\\
&=(a-b)\int^a_b(L'(t))^2dt\geq(\int^a_bL'(t)dt)^2.
\end{split}
\end{equation*}
In similar way, we can prove that the above inequality is true for all $a\leq b.$ Therefore
\begin{equation}\label{eq4.6}
H'(a-b)(r(a)-r(b))\geq|L(a)-L(b)|^2 \,\,\ \text{ for any } \,\,\ a,b\in\R.
\end{equation}
By using \eqref{eq4.6}, we have
\begin{equation}\label{eq4.7}
|L(v_n)(x)-L(v_n)(y)|^2\leq(v_n(x)-v_n(y))((v_nv_{L,n}^{2(\beta-1)})(x)-(v_nv_{L,n}^{2(\beta-1)})(y)).
\end{equation}
Now, we take $r(v_n)=v_nv_{L,n}^{2(\beta-1)}$ as test-function in \eqref{eq4.5} and in view of \eqref{eq4.7}, we obtain
\begin{equation}\label{eq4.8}
\begin{split}
&[L(v_n)]^2+\int_{\R^N}V_n(x)|v_n|^2v^{2\beta-1}_{L,n}dx\\
\leq&\int\int_{\R^{2N}}\frac{v_n(x)-v_n(y)}{|x-y|^{N+2s}}((v_nv_{L,n}^{2(\beta-1)})(x)-(v_nv_{L,n}^{2(\beta-1)})(y))dxdy+\int_{\R^N}V_{n}(x)|v_n|^2v^{2(\beta-1)}_{L,n}dx\\
=&\int_{\R^N}(\frac{1}{|x|^{\mu}}\ast F(v_n))f(v_n)v_nv^{2(\beta-1)}_{L,n}dx+\int_{\R^N}|v_n|^{2^*_s-2}v_nv^{2(\beta-1)}_{L,n}dx.
\end{split}
\end{equation}
Since $$L(v_n)\geq\frac{1}{\beta}v_nv^{2(\beta-1)}_{L,n}$$
and we can use Lemma \ref{Lem2.1} to deduce that
\begin{equation}\label{eq4.9}
[L(v_n)]^2\geq C\|L(v_n)\|^2_{L^{2^*_s}(\R^N)}\geq(\frac{1}{\beta})^2C\|v_nv^{(\beta-1)}_{L,n}\|^2_{L^{2^*_s}(\R^N)}.
\end{equation}
On the other hand, since $\{v_n\}$ is bounded in $H^s(\R^N),$ there exists $C_0>0$ such that
\begin{equation}\label{eq4.10}
\|\frac{1}{|x|^{\mu}}\ast F(v_n)\|_{L^{\infty}(\R^N)}<C_0.
\end{equation}
Taking $\xi\in(0,V_0),$ and using \eqref{eq3.1}, \eqref{eq4.9} and \eqref{eq4.10}, we can see that \eqref{eq4.8} yields $$\|v_nv^{\beta-1}_{L,n}\|^2_{L^{2^*_s}(\R^N)}\leq C\beta^2 \big(\int_{\R^N}|v_n|^qv^{2(\beta-1)}_{L,n}dx+\int_{\R^N}|v_n|^{2^*_s}v_{L,n}^{2(\beta-1)}dx\big).$$
Set $q+2\beta-2=2^*_s$ $\Rightarrow$ $\beta=\frac{1}{2}(2^*_s+2-q)>1,$ then
\begin{equation*}
\begin{split}
&\big(\int_{\R^N}|v_nv_{L,n}^{\beta-1}|^{2^*_s}dx\big)^{\frac{2}{2^*_s}}\\
\leq&C\beta^2\big(\int_{\R^N}|v_n|^qv_{L,n}^{2\beta-1}dx+\int_{\R^N}|v_n|^{2^*_s-1}(v_nv_{L,n}^{2(\beta-1)})dx\big)\\
\leq&C\beta^2\big(\int_{\R^N}|v_n|^{2^*_s}dx+\int_{\{v_n\leq R_0\}}|v_n|^{2^*_s-1}(v_nv_{L,n}^{2(\beta-1)})dx+\int_{\{v_n>R_0\}}|v_n|^{2^*_s-2}(v_nv_{L,n}^{\beta-1})^2dx\big).
\end{split}
\end{equation*}
By $\{u_n\}$ is bounded in $H_s$, $\exists R_0>0$ $s.t.$ $\big(\int_{\{v_n>R_0\}}|v_n|^{2^*_s}dx\big)^\frac{2^*_s-2}{2^*_s}\leq\frac{1}{2C\beta^2}.$ Hence, we can see that
\begin{equation*}
\begin{split}
&\int_{\{v_n\leq R_0\}}|v_n|^{2^*_s-q+1}|v_n|^{q-1}(v_nv{L,n}^{2(\beta-1)})dx+\big(\int_{\{v_n>R_0\}}|v_n|^{2^*_s}dx\big)^{\frac{2^*_s-2}{2^*_s}}\big(\int_{\{v_n>R_0\}}(v_nv_{L,n}^{\beta-1})^{2^*_s}dx\big)^{\frac{2}{2^*_s}}\\
\leq&R_0^{2^*_s-q+1}\int_{\R^N}|v_n|^{2^*_s}dx+\frac{1}{2C\beta^2}\big(\int_{\R^N}(v_nv_{L,n}^{\beta-1})^{2^*_s}dx\big)^{\frac{2}{2^*_s}}.
\end{split}
\end{equation*}
Therefore, we can deduce that
\begin{equation}\label{eq4.11}
\big(\int_{\R^N}|v_nv_{L,n}^{\beta-1}|^{2^*_s}dx\big)^{\frac{2}{2^*_s}}\leq2C\beta^2(1+R_0^{2^*_s-q+1})\int_{\R^N}|v_n|^{2^*_s}dx<C<+\infty.
\end{equation}
Taking the limit in \eqref{eq4.11} as $L\rightarrow+\infty$ and Fatou lemma, we have $\big(\int_{\R^N}|v_n|^{2^*_s\beta}dx\big)^{\frac{2}{2^*_s}}\leq C<+\infty.$ so, $v_n\in L^{2^*_s\beta}(\R^N).$ For any $\beta>\frac{1}{2}(2^*_s+2-q)>1$ and $\beta\leq1+\frac{2^*_s}{2}\cdot\frac{2^*_s-q}{2}$ then $2<q+2\beta-2<2^*_s+2\beta-2\leq2^*_s(1+\frac{2^*_s-q}{2}).$ we can deduce that
\begin{equation*}
\big(\int_{\R^N}|v_n|^{2^*_s\beta}dx\big)^{\frac{2}{2^*_s}}\leq C\beta^2\big(\int_{\R^N}|v_n|^{q+2\beta-2}dx+\int_{\R^N}|v_n|^{2^*_s+2\beta-2}dx\big)\leq C_0<+\infty.
\end{equation*}
Let $a=\frac{2^*_s(2^*_s-q)}{2(\beta-1)},$ $b=q+2\beta-2-a,$ $r=\frac{2^*_s}{a},$ $r'=\frac{2^*_s}{2^*_s-a},$ then $\frac{2^*_sb}{2^*_s-a}=2^*_s+2\beta-2.$ Taking into account Young inequality we have
$$\int_{\R^N}|v_n|^{q+2\beta-2}dx\leq\frac{a}{2^*_s}\int_{\R^N}|v_n|^{2^*_s}dx+\frac{2^*_s-a}{2^*_s}\int_{\R^N}|v_n|^{2^*_s+2\beta-2}dx\leq C\big(1+\int_{\R^N}|v_n|^{2^*_s+2\beta-2}dx\big).$$
Therefore, $$\big(\int_{\R^N}|v_n|^{2^*_s\beta}dx\big)^{\frac{2}{2^*_s}}\leq C\beta^2\big(1+\int_{\R^N}|v_n|^{2^*_s+2\beta-2}dx\big).$$
We note to $\beta>1,$ we deduce that
\begin{equation}\label{eq4.12}
\big(1+\int_{\R^N}|v_n|^{2^*_s\beta}dx\big)^{\frac{2}{2^*_s}}\leq C\beta^2\big(1+\int_{\R^N}|v_n|^{2^*_s+\beta-2}dx\big).
\end{equation}
Now, we set $\beta=1+\frac{2^*_s}{2}\cdot\frac{2^*_s-q}{2},$ then observing that $2^*_s+2\beta-2=2^*_s(\frac{1}{2}(2^*_s+2-q)).$ Iterating this process and recalling that $2^*_s+2\beta_{i-1}-2=2^*_s\beta_i.$
Argue as \cite{He2016Existence}. Thus, $$\beta_{i+1}-1=(\frac{2^*_s}{2})^i(\beta_1-1).$$
Replacing it in \eqref{eq4.12} we have
\begin{equation*}
\big(1+\int_{\R^N}|v_n|^{2^*_s\beta_{i+1}}dx\big)^{\frac{1}{2^*_s(\beta_{i+1}-1)}}\leq(C\beta_{i+1}^2)^{\frac{1}{2(\beta_{i+1}-1)}}\big(1+\int_{\R^N}v_n^{2\beta_i+2^*_s-2}dx\big)^{\frac{1}{2(\beta_i-1)}}.
\end{equation*}
Denoting $C_{i+1}=C\beta_{i+1}^2$ and $K_i:=(1+\int_{\R^N}v_n^{2\beta_i+2^*_s-2}dx)^{\frac{1}{2(\beta_i-1)}}.$ We conclude that there exists a constant $C_0>0$ independent of $i,$ such that $$K_{i+1}\leq\prod^{i+1}_{i=2}C_i^{\frac{1}{2(\beta_i-1)}}K_1\leq CK_1.$$
Therefore, $$\|v_n(x)\|_{L^{\infty}}(\R^N)\leq C_0K_1<\infty,$$
uniformly on $n\in\N,$ thanks to $v_n\in L^{2^*_s\beta_1}(\R^N)$ and $\|v_n\|_{\varepsilon_n}\leq C.$
Arguing as in \cite{Alves2016}, we can prove that $$\lim\limits_{|x|\rightarrow\infty}v_n(x)=0 \text{    uniformly in  } n\in \N.$$
\end{proof}
\par
Now we consider $\varepsilon_n\rightarrow0^+$ and take a sequence $u_n\in \tilde{\mathcal{N}}_\varepsilon$ of solutions of the problem \eqref{eq2.3} as above. There exists $\gamma>0$ such that
\begin{equation}\label{eq4.13}
\|u_n\|_{L^\infty(\R^N)}\geq\gamma \,\,\,\,\ \text{ uniformly in } \,\,\,\ n\in \N.
\end{equation}
Assume by contradiction, we have $\lim\limits_{n\rightarrow\infty}\|u_n\|_{L^\infty(\R^N)}=0.$ For any $\xi>0,$ there exists $n_0$ such that $\|u_n\|_{L^\infty(\R^N)}<\xi$ for any $n>n_0.$
Since $u_n\in\tilde{\mathcal{N}}_\varepsilon,$ we have
\begin{equation*}
\begin{split}
\|u_n\|_{\varepsilon_n}^2=&\int_{\R^N}\big(\frac{1}{|x|^\mu}\ast F(u_n)\big)f(u_n)u_ndx+\int_{\R^N}|u_n|^{2^*_s}dx\\
\leq&C\big(\int_{\R^N}(|u_n|^{2t}+|u_n|^{qt})dx\big)^{\frac{2}{t}}+\int_{\R^N}|u_n|^{2^*_s}dx
\end{split}
\end{equation*}
where $t=\frac{2N}{2N-\mu}.$ Since $2t\in(2,2^*_s)$ and $qt\in(2,2^*_s),$ there exists $\sigma>0$ small enough such that $(2t-\sigma)\in(2,2^*_s)$ and $(qt-\sigma)\in(2,2^*_s).$ Since we have that $\{u_n\}$ is bound in $H_0^1(\R^N),$ we can deduce to
\begin{equation*}
\begin{split}
\|u_n\|_{\varepsilon_n}&\leq C\big(\int_{\R^N}(|u_n|^{2t-\sigma}|u_n|^\sigma+|u_n|^{qt-\sigma}|u_n|^\sigma)dx\big)^{\frac{2}{t}}+\int_{\R^N}|u_n|^{2^*_s-\sigma}|u_n|^{\sigma}dx\\
&\leq C\|u_n\|_{L^\infty(\R^N)}^{\sigma\cdot\frac{2}{t}}\big(\int_{\R^N}(|u_n|^{2t-\sigma}+|u_n|^{qt-\sigma})dx\big)^{\frac{2}{t}}+\|u_n\|_{L^\infty(\R^N)}^{\sigma}\int_{\R^N}|u_n|^{2^*_s-\sigma}dx\\
&<C_1\xi^{\frac{2\sigma}{t}}+C_2\xi^{\sigma}.
\end{split}
\end{equation*}
This implies that $\|u_n\|_{\varepsilon_n}\rightarrow 0~(n\rightarrow\infty).$ In similar way, we can decude
$$\frac{1}{2}\int_{\R^N}\big(\frac{1}{|x|^\mu}\ast F(u_n)\big)F(u_n)dx+\frac{1}{2^*_s}\int_{\R^N}|u_n|^{2^*_s}dx\rightarrow0~(n\rightarrow\infty),$$
then $J_{\varepsilon_n}(u_n)\rightarrow0~(n\rightarrow\infty),$ this contradict with $J_{\varepsilon_n}(u_n)\rightarrow m_{_{V_0}}>0.$ As a consequence, \eqref{eq4.13} holds.
By Lemma \ref{Lem4.10}, we have
$$\|v_n\|_{L^\infty(\R^N)}\leq C, \,\,\,\ \text{uniformly in} \,\,\,\ n\in \N,$$
and
$$\lim\limits_{|x|\rightarrow\infty}v_n(x)=0 \,\,\,\ \text{uniformly in} \,\,\,\ n\in \N.$$
There exists $R>0$ such that  $\|v_n\|_{L^\infty(B^c_R(0))}<\gamma,$
then
\begin{equation}\label{eq5.14}
\|u_n\|_{L^\infty(B^c_R(y_n))}<\gamma.
\end{equation}
Hence
\begin{equation}\label{eq5.15}
\|u_n\|_{L^\infty(B_R(y_n))}\geq\gamma.
\end{equation}
Let $p_n$ is the global maximum point of $u_n,$ taking into account \eqref{eq5.14} and \eqref{eq5.15} we can get $p_n\in B_R(y_n).$ Hence, $p_n=y_n+q_n$ for some $q_n\in B_R(0).$ Then $\xi_{\varepsilon_n}=\varepsilon_ny_n+\varepsilon_nq_n$ is the maximum point of $u_n(\frac{x}{\varepsilon_n}).$ Since $|q_n|<R$ for any $n\in\N$ and $\varepsilon_ny_n\rightarrow y_0\in\Lambda.$ Therefore,
$$\lim\limits_{n\rightarrow\infty}V(\xi_{\varepsilon_n})=V(y_0)=V_0.$$
which ends the proof of the Theorem\ref{Thm1.2}.

\raisebox{-0.5mm}{\rule{1.5mm}{1mm}}\vspace{6pt}
\section*{Acknowledgment}
We would like to thank the anonymous referee for his/her careful readings of our manuscript and the useful comments made for its improvement.

\end{document}